\newcommand{\st}{\ast}
\newcommand{\ds}{-}
\theoremstyle{definition}
\newtheorem{theorem}{Theorem}
\newtheorem{definition}[theorem]{Definition}
\newtheorem{proposition}[theorem]{Proposition}
\newtheorem{lemma}[theorem]{Lemma}
\newtheorem{corollary}[theorem]{Corollary}
\newtheorem{example}[theorem]{Example}
\newtheorem{remark}[theorem]{Remark}
\DeclareMathOperator{\sign}{sign}
\begin{document}

\title{Johnson Graph Codes}

\author{ {Iwan~Duursma and Xiao~Li} }
\thanks{ This work was supported in part by NSF grant CCF 1619189. Email: {duursma,~xiaoli17~@illinois.edu.} }

\maketitle

\begin{abstract}
We define a Johnson graph code
as a subspace of labelings of the vertices in a Johnson graph with the property that labelings are uniquely determined by their restriction to vertex neighborhoods specified by the parameters of the code. We give a construction and main properties for the codes and show their role in the concatenation of layered codes that are used in distributed storage systems.
A similar class of codes for the Hamming graph is discussed in an appendix. Codes of the latter type are in general different from affine Reed-Muller codes, but for the special case of the hypercube they agree with binary Reed-Muller codes. 
\end{abstract}

\section{Introduction}
Codes for distributed storage encode data as a collection of $n$ smaller pieces that are then stored at $n$ different nodes, possibly in different locations. 
A code is regenerating of type $(n,k,d)$ if the original data can be recovered from any $k$ of the $n$ pieces, i.e., by downloading data from any $k$ of the $n$ nodes, and if any failed node can be repaired with small amounts of helper data from any $d$ of the remaining nodes~\cite{Dimakis+10}. The need for special codes for distributed storage arises because classical error-correcting codes that meet the requirement for efficient data collection, including Reed-Solomon codes, do not have efficient node repair.  A code is called exact repair code if a failed node is replaced with an identical copy. It is called functional repair code if a failed node is replaced with a node that maintains the data collection and node repair properties of the code.

Layered codes \cite{Tian+14} are an important building block for several families of regenerating codes, including improved layered codes \cite{Senthoor+15} and coupled-layer codes \cite{Sasidharan+}. These codes are defined with $d=n-1$. Determinant codes \cite{Elyasi+16} and cascade codes \cite{Elyasi+18},  \cite{Elyasi+19} use a different construction that removes this constraint. For a distributed storage system with $n$ nodes and for a parameter $2 \leq v \leq n$, a layered code is defined by ${n \choose v}$ disjoint checks of length~$v$. The symbols $(c_i : i \in L)$ in a layer $L \subset \{0,1,\ldots,n-1\}$ of size $v$ satisfy $\sum_{i \in L} c_i = 0$. With a ratio of $v-1$ data symbols to $1$ redundant symbol, the use of longer checks makes a code more storage efficient. 
At the same time node repair becomes more expensive, with a ratio of $v-1$ helper symbols for each erased symbol.
Figure \ref{fig433} illustrates different trade-offs for $n=4$ nodes. In general, for codes of type $(n,k,d)=(n,n-1,n-1)$, layered codes and their space sharing versions realize the optimal trade-offs between storage overhead and repair bandwidth for exact repair regenerating codes \cite{T14}, \cite{MT15}, \cite{PK15}, \cite{D18}. 
Of special importance are the MSR point (minimum storage) for $v=n$, and the MBR point (minimum bandwidth) for $v=2$.

In this paper we define Johnson graph codes and show how they can be used to construct new regenerating codes by combining different layered codes of different sizes. The Johnson graph codes add relations among data in different layers. With the added relations, data can be reconstructed after connecting
to a smaller amount of nodes. The combination of layered codes does not affect node repair properties. The new regenerating codes have parameters
$(n,k,d)$ with $k < d = n-1$, whereas for layered codes $k = d = n-1$. The improved data download from fewer than $n-1$ nodes comes at a cost of an increased amount of redundant data at each node. The construction includes as special cases improved layered codes \cite{Senthoor+15} and cascade codes with $d=n-1$. In \cite{Elyasi+18}, cascade codes are defined for general $(n,k,d)$ in terms of determinant codes \cite{Elyasi+16}. It is conjectured in
\cite{Elyasi+18} that cascade codes realize optimal storage versus bandwidth trade-offs for all $(n,k,d)$. It is not hard to see that cascade codes are optimal 
for all $(n,k,d)$ if and only if they are optimal for the special case $d=n-1$.

The outline of the paper is as follows. Layered codes are presented in Section \ref{S:lc}. Johnson graph codes are defined in Section \ref{S:Jdefn} as subspaces of labelings of the vertices in a Johnson graph with the property that labelings are uniquely determined by their restriction to vertex neighborhoods specified by the parameters of the code.
A general construction for the codes is presented in Section \ref{S:Jcons}. Section \ref{S:dual} describes equivalent constructions and addresses duality.
As a special case of the general construction, in Section \ref{S:rs} we construct Johnson graph codes from Reed-Solomon codes. Section \ref{S:appl} presents a construction of regenerating codes as a concatenation of layered codes. The construction of Johnson graph codes may be pursued for other graphs, and for distance regular graphs in particular. The case of Hamming graphs is of particular interest and is discussed in an appendix.   

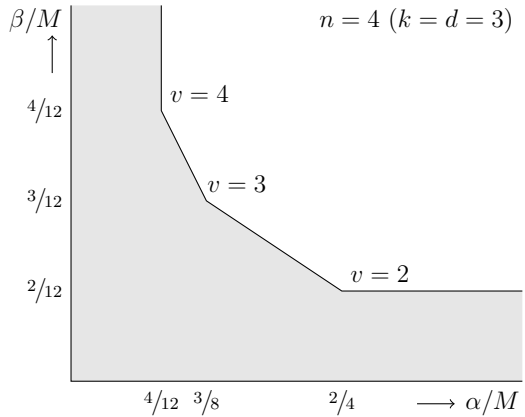
\begin{figure} \label{fig433}
\begin{center}
\begin{tikzpicture}  
  [scale=1] 
 
 \node [left,scale=0.8] (y1) at (0,1.2) {$\sfrac2{12}$};
 \node [left,scale=0.8] (y2) at (0,3.6) {$\sfrac4{12}$};

 \node [left,scale=0.8] at (0,4.8) {$\beta/M$};
 \path [draw=black,->] (-.25,4.1) -- (-.25,4.6); 

 \node [below,scale=0.8] (x1) at (1.2,0) {$\sfrac4{12}$};
 \node [below,scale=0.8] (x2) at (3.6,0) {$\sfrac24$};

 \node [below,scale=0.8] at (5.6,0) {$\alpha/M$};
 \path [draw=black,->] (4.6,-0.3) -- (5.1,-0.3); 
 
 \node [below,scale=0.8] (x3) at (1.8,0) {$\sfrac38$};
 \node [left,scale=0.8] (y3) at (0,2.4) {$\sfrac3{12}$};
 
 \node [above right,scale=0.8] at (1.2,3.6) {$v=4$}; 
 \node [above right,scale=0.8] at (3.6,1.2) {$v=2$}; 
 
 \node [above right,scale=0.8] at (1.7,2.4) {$v=3$}; 

\path [fill=black!10] (5,0) -- (0,0) -- (0,5) -- (1.2,5) -- (1.2,3.6) -- (1.8,2.4) -- (3.6,1.2) -- (6,1.2) -- (6,0);
\path [draw=black] (6,0) -- (0,0) -- (0,5);
\path [draw=black] (1.2,5) -- (1.2,3.6) -- (1.8,2.4) -- (3.6,1.2) -- (6,1.2);

\node [left,scale=0.8] at (6,4.8) {$n=4~(k=d=3)$};

\end{tikzpicture}
\end{center}
\caption{Storage cost $\alpha/M$ vs Node repair cost $\beta/M$ for $n=4$ nodes using local checks of length $v=2,3,4$.}
\end{figure}

\section{Layered codes} \label{S:lc}

Layered codes are defined in \cite{Tian+14}. 
The layered codes that we consider in this section are called canonical layered codes in that paper. 
For given $n$ and $2 \leq v \leq n$ a layered code is constructed as follows. For each $v$-subset $L \subset \{ 0, 1, \ldots, n-1 \}$ (called a layer) a node $i \in L$ stores a symbol $c_i$ such that $\sum_{i \in L} c_i = 0$. The ${n \choose v}$ local codewords, one for each layer, are concatenated into one codeword of length ${n \choose v} v.$ With one parity per layer, the overall redundancy of the code is $R = {n \choose v}$. Let $\alpha$ be the number of symbols stored by each of the $n$ nodes. Counting the total number of symbols in the code in two ways, by node and by layer, gives $n \alpha = R v.$ Let $\beta$ be the number of symbols that is downloaded from a helper node to repair a failed node. There is one such symbol for each layer that contains both the helper node and the failed node. Counting the total amount of helper information in two ways, by pairs of nodes and by layer, gives ${n \choose 2} \beta = R {v \choose 2}.$

\begin{example}[Motivating example]
Consider the layered code with $(n,k,d) = (8,7,7)$ and $v = 5$. It has $(R,\alpha,\beta) = (\binom85,\binom74,\binom63) = (56,35,20).$ The code
has $\binom{8}{5} = 56$ layers, each containing five symbols that sum to zero. The symbols may be put in a $8 \times 56$ array format such that rows correspond to disks and columns to layers. Symbols $c_1, c_3, c_5, c_6, c_7$ for the layer
$\{ 1, 3, 5, 6, 7 \}$ will form a column $(0, c_1, 0, c_3, 0, c_5, c_6, c_7)^T$. Apart from the assigned zeros in the array, the only checks for the array are that each column sums to zero. When a layer is accessed in 7-out-of-8 disks either all symbols 
in a layer are available or one symbol is missing but can be recovered using the zero check sum for the layer. In general, upon choosing any seven disks, say the first seven, the layers are of two types (Table \ref{tbl:78}).
The $\binom75=21$ layers in the first group are fully accessed and all their symbols are available. The $\binom74=35$ layers in the second group miss one symbol that is however protected by the parities on the individual layers. Once we lower $k$ , the partitioning of the layers changes (Table \ref{tbl:48}). 

\begin{table} 
\begin{center}
$\begin{array}{
                       c@{\hspace{3mm}}c
                       c@{\hspace{2.5mm}}c@{\hspace{2.5mm}}c@{\hspace{2.5mm}}c@{\hspace{2.5mm}}c@{\hspace{2.5mm}}c@{\hspace{2.5mm}}c@{\hspace{3mm}}c
                       c@{\hspace{3mm}}cc}
\toprule
& &\multicolumn{7}{c}{\text{accessed nodes}} & &\multicolumn{1}{c}{\text{missed}} & &\text{distance}   \\
\midrule 
{\#}  &   &\multicolumn{7}{c}{0, 1, 2, 3, 4, 5, 6} & &\multicolumn{1}{c}{7} & &r  \\ 
\midrule
21 &     &\ds &\ds &\st  &\st&\st &\st &\st & &\ds & & 0  \\
35 &     &\ds &\ds &\ds  &\st&\st &\st &\st & &\st & & 1  \\
\bottomrule 
\end{array}$ \\
\bigskip
 \caption{Accessing a code with layers of size $v=5$ in $k=7$ nodes.}  \label{tbl:78}
\end{center}
\end{table}

\begin{table}  
\begin{center}
$\begin{array}{
                       c@{\hspace{3mm}}c
                       c@{\hspace{2.5mm}}c@{\hspace{2.5mm}}c@{\hspace{1mm}}cc@{\hspace{3mm}}c
                       c@{\hspace{2.5mm}}c@{\hspace{2.5mm}}c@{\hspace{2.5mm}}c@{\hspace{3mm}}c
                       c@{\hspace{3mm}}cc}
\toprule
& &\multicolumn{4}{c}{\text{accessed nodes}} & &\multicolumn{4}{c}{\text{missed}} & &\text{distance}   \\
\midrule 
{\#}  &   &\multicolumn{4}{c}{0, 1, 2, 3} & &\multicolumn{4}{c}{4, 5, 6, 7} & &r  \\ 
\midrule
4  & &\quad \st &\st &\st &\st & &~\st &\ds &\ds &\ds & & 0  \\
24 &  &\quad \ds &\st &\st &\st & &~\st &\st &\ds &\ds & & 1  \\
24 &  &\quad \ds &\ds &\st &\st & &~\st &\st &\st &\ds & & 2 \\
4  & &\quad \ds &\ds &\ds &\st & &~\st &\st &\st &\st & & 3  \\
 \bottomrule
 \end{array}$ 
 \bigskip
 \caption{Accessing a code with layers of size $v=5$ in $k=4$ nodes.} \label{tbl:48}
\end{center}
\end{table}

The partitioning of the layers can be captured by a partitioning of vertices in a Johnson graph. The distances in Table \ref{tbl:78} and Table \ref{tbl:48} are the distances induced by the Johnson graph. Data recovery proceeds sequentially and starts with layers at $r=0$. As $r$ increases, missing data in layers of type $r$ will be recovered from available data in combination with stored parities. In the next section we define a family of codes on Johnson graphs that, for a minimum of stored extra parities and for any combination of $k$ accessed disks, recover data in under-accessed layers from available data in fully-accessed layers. The recovery process itself is described in Section \ref{S:appl}. 
\end{example}

\section{Johnson Graph Codes} \label{S:Jdefn}

\begin{definition}
The Johnson graph $J(n,v)$ is the undirected graph with vertex set all $v$-subsets of a given $n$-element set such that two vertices are adjacent if they intersect in $v-1$ elements. 
The default choice for an $n$-element set is the set $\{0,1,\dots, n-1\}$. 
\end{definition}

Our goal is to construct a linear code of labelings of the vertices in a Johnson graph such that labelings are uniquely determined by their restriction to a neighborhood in the graph. In this section we formulate the requirements for the code. In the following three sections we give a general construction and we establish important properties. In Section \ref{S:appl} the codes are used in the construction of exact repair regenerating codes for use in distributed storage. 

With the usual definition of distance between two vertices as the length of the shortest path between two vertices, two vertices in $J(n,v)$ are at distance $r$ if they intersect in 
$v-r$ elements. 
For any given vertex $x$, the vertices in $J(n,v)$ partition into subsets according to their distance to $x$. We refer to the subsets as the shells around $x$. 

\begin{example} \label{ex:5221}
The shells around $x = \{ 0,1 \} \subset \{ 0,1,2,3,4 \}$ are (we write $x = 01$)
\begin{align*}
S_0(01) = \{ 01 \},  ~~
S_1(01) = \{ 02, 03, 04, 12, 13, 14 \}, ~~
S_2(01) = \{ 23, 24, 34 \}.
\end{align*}
We construct a code of length 10 for the graph $J(5,2)$ that recovers the three symbols at the vertices $S_2(x)$ from the seven symbols at the vertices $S_0(x)$ and $S_1(x)$, for any choice of vertex $x$, by storing three fixed parities. Let $\alpha_0, \alpha_1, \alpha_2, \alpha_3, \alpha_4$ be five distinct elements of the field~$F$. 
For a general word $c = (c_{ij}) \in F^{10}$ define a parity vector $H c \in F^3$ as 
\[
H c = \sum c_{ij} \left( \begin{array}{c} 1 \\ \alpha_i + \alpha_ j \\ \alpha_i \alpha_j \end{array} \right).
\]
For each of the ten sets $S_2(x)$, where $x$ is a vertex in $J(5,2)$, the corresponding columns in the parity matrix $H$ are linearly independent. 
For $S_2(x) = \{ 23, 24, 34 \}$, the three columns form the invertible submatrix 
\[
\left( \begin{array}{ccc} 1 &1 &1 \\ \alpha_2 + \alpha_ 3 &\alpha_2 + \alpha_ 4 & \alpha_3 + \alpha_ 4 \\ 
\alpha_2 \alpha_3 &\alpha_2 \alpha_4 &\alpha_3 \alpha_4 \end{array} \right).
\]
Thus the three symbols $c_{23}, c_{24}, c_{34}$ can be recovered from the remaining seven symbols in combination with the three fixed parities $H c$. Moreover, the three symbols can be recovered efficiently one at a time. The equation $(\alpha_4^2, - \alpha_4, 1)Hc$ expresses $c_{23}$ as a linear combination of $c_{01}, c_{02}, c_{03}, c_{12}, c_{13}.$ The symbols $c_{24}, c_{34}$ are recovered similarly.

\end{example}

We encounter Johnson graphs in two different ways. For a storage system with $n$ nodes and for a parameter $2 \leq v \leq n$, layered codes are defined by ${n \choose v}$ disjoint checks of length~$v$, one for each $v$-subset of nodes. The $v$-subsets are called layers.
They form a set of vertices for a Johnson graph $J(n,v)$. A second graph $J(n,k)$ occurs as follows. For a regenerating code of type $(n,k,d)$ data is collected by contacting any $k$ nodes. The possible data collection scenarios are the $k-$subsets of an $n-$element set. They form the vertices of a Johnson graph $J(n,k)$.

For each collection scenario, i.e., for each $k$-subset $A \subset \{0,1,\ldots,n-1\}$, we define a partition of the layers in the layered code, i.e., of the vertices in $J(n,v)$. Table~\ref{tbl:78} and Table~\ref{tbl:48} illustrate vertex partitions into shells for the cases $k > v$ and $k < v$. 

For $k \geq v$, layers $L$ with $L \subset A$ 
are fully accessible. They form the shell $S_0(A) = \{ L : L \subset A \}.$ Layers at distance $r$ from $S_0(A)$ 
 form the shell $S_r(A) = \{ L : |L \backslash A| = r \}.$ 
For $k \leq v$, layers are accessible in at most $k$ symbols. The maximally accesible layers are those with $A \subset L$. They form the shell $S_0(A) = \{ L : L \supset A \}.$
The shell at distance $r$ from these layers is $S_r(A) = \{ L : |A \backslash L| = r \}.$ In general, since $k-v = |A \backslash L| - |L \backslash A|$,
\begin{equation} \label{eq:Sr}
L \in S_r(A) ~~\Leftrightarrow~~ r = \min( \lvert L \backslash A \rvert, \lvert A \backslash L \rvert ).
\end{equation}
For $L \in J(n,v)$, let $B_r(L)$ be the neighborhood of radius $r$ around $L$.

\begin{definition} \label{def:Br}
For the Johnson graph $J(n,v)$ and for a subset $A$ of $\{0,1,\dots, n-1\}$, 
define a generalized neighborhood as a union of vertex neighborhoods.
\[
B_r(A) = \begin{cases} \cup_{L_0 \subset A} B_r({L_0}) ~&(|A| \geq v). \\
\cup_{L_0 \supset A} B_r(L_0) ~&(|A| \leq v). 
\end{cases}
\]
\end{definition}

It is easy to see that membership $L \in B_r(A)$ depends only on the size of $L \backslash A$ or $A \backslash L$.
\begin{equation} \label{eq:Br}
L \in B_r(A) ~~\Leftrightarrow~~ \begin{cases} \lvert L \backslash A \rvert \leq r ~&(|A|\geq v)  \\ 
 \lvert A \backslash L \rvert \leq r ~&(|A| \leq v)  
\end{cases}
~~\Leftrightarrow~~ r \geq \min( \lvert L \backslash A \rvert, \lvert A \backslash L \rvert ). 
\end{equation}
The shell $S_r(A) = B_r(A) \backslash B_{r-1}(A)$. 
With the above notions in mind, we define Johnson graph codes as codes on the vertices of a Johnson graph such that generalized neighborhoods form information sets.

\begin{definition} \label{d1}
A Johnson graph code $JGC(n, v, k, r)$ on the vertices of the Johnson graph $J(n, v)$ is a code of length $N = \binom{n}{v}$ and dimension 
$K = \lvert B_r( A ) \rvert$ such that, for any $k$-subset $A$ of $\{0,1,\dots, n-1\}$, $B_r(A)$ forms an information set. 
\end{definition}

It is worth noting that the definition only refers to matroid properties of the code. In matroid terms it asks that the set of bases for the matroid of the code 
is large enough that it includes all generalized neighborhoods $B_r(A)$. The definition includes as trivial examples all MDS codes of length $N$ and dimension $K$. Any Johnson graph code, and in particular any length $N$, dimension $K$ MDS code, can be used in the construction of concatenated layered codes in Section \ref{S:appl}. 
The codes constructed in the next section have the advantage of being defined over a field of
size $n$ instead of $N = {n \choose v}$ and moreover have a parity check structure that allows efficient erasure decoding.  
We include some lemmas for later use and prove in Proposition \ref{p:dualjgc} that the dual of a Johnson graph code is again a Johnson graph code.

\begin{lemma} Let $k = |A|$.
\[
\lvert B_r(A)\rvert  = \begin{cases}
\sum_{i=0}^{r} \binom{n-k}{i}\binom{k}{v-i} ~&(k \geq v). \\
\sum_{i=0}^{r}\binom{k}{i}\binom{n-k}{n-v-i} ~&(k \leq v). 
\end{cases}
\]
\end{lemma}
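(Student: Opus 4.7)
The proof is essentially a direct counting argument, based on the characterization of $B_r(A)$ given in equation~\eqref{eq:Br}. The plan is to partition $B_r(A)$ into shells $S_i(A)$ for $0 \leq i \leq r$ and count each shell separately, using \eqref{eq:Sr} to fix the combinatorial type of vertices in each shell.

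Consider first the case $k \geq v$. By \eqref{eq:Br}, a vertex $L \in J(n,v)$ lies in $B_r(A)$ if and only if $|L \setminus A| \leq r$. I would split by $i = |L \setminus A|$, so that $L$ is obtained by choosing $i$ elements from the $n - k$ elements of $\{0,1,\dots,n-1\} \setminus A$ and $v - i$ elements from the $k$ elements of $A$. This gives $\binom{n-k}{i}\binom{k}{v-i}$ choices for $L$, and summing over $0 \leq i \leq r$ yields the stated formula.

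For the case $k \leq v$, again by \eqref{eq:Br}, $L \in B_r(A)$ if and only if $|A \setminus L| \leq r$. I would split by $i = |A \setminus L|$, so $L$ contains exactly $k - i$ elements of $A$ and the remaining $v - (k - i) = v - k + i$ elements are drawn from $\{0,1,\dots,n-1\} \setminus A$. This gives $\binom{k}{i}\binom{n-k}{v-k+i}$ choices. The only step requiring a small remark is the binomial identity
\[
\binom{n-k}{v-k+i} = \binom{n-k}{(n-k)-(v-k+i)} = \binom{n-k}{n-v-i},
\]
which rewrites the sum in the form stated in the lemma.

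There is no real obstacle here — the content is entirely the bookkeeping in \eqref{eq:Br} — but I would be careful to note explicitly why the two cases are not symmetric in appearance: in one case it is natural to count elements of $L$ outside $A$, in the other it is natural to count elements of $A$ outside $L$, and the asymmetric cosmetic form of the two sums arises from this choice together with the complement identity above.
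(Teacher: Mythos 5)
Your proof is correct and takes essentially the same approach as the paper: both partition $B_r(A)$ by the index $i = |L \setminus A|$ (resp.\ $i = |A \setminus L|$) and count each shell by a product of two binomial coefficients. The only cosmetic difference is that in the $k \le v$ case the paper counts $|L^c \cap A^c| = n - v - i$ directly, whereas you count $|L \cap A^c| = v - k + i$ and then apply the complement identity to reach the same form.
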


\begin{proof}
The first sum counts $L$ with $|L \cap A^c| = i \leq r$ and $|L \cap A| = v-i$. The second sum counts $L$ with $|L^c \cap A| = i \leq r$
and $|L^c \cap A^c| = n-v-i.$ 
\end{proof}

We introduce the following notation. Let
\begin{equation} \label{Rd1d2}
d_1 = \min( v, n-k ), ~d_2 = \min( k, n-v),
\end{equation}
and let $R = \min (d_1,d_2).$

\begin{lemma} \label{L:D}
$S_r(A) = \emptyset$ for all $r > R.$
\end{lemma}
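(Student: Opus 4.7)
The plan is to use the characterization of $S_r(A)$ given in equation (\ref{eq:Sr}), namely
\[
L \in S_r(A) \iff r = \min(|L \setminus A|, |A \setminus L|),
\]
and to show that this minimum is always bounded above by $R$. Once established, no vertex $L \in J(n,v)$ can lie in any $S_r(A)$ with $r > R$, so those shells are empty.

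First I would bound $|L \setminus A|$. Since $L \setminus A \subseteq L$ and $L \setminus A \subseteq A^c$, we have $|L \setminus A| \leq |L| = v$ and $|L \setminus A| \leq |A^c| = n-k$. Hence $|L \setminus A| \leq \min(v, n-k) = d_1$. By the symmetric argument applied to $A \setminus L \subseteq A$ and $A \setminus L \subseteq L^c$, we get $|A \setminus L| \leq \min(k, n-v) = d_2$. Taking the minimum of the two bounds yields
\[
\min(|L \setminus A|, |A \setminus L|) \leq \min(d_1, d_2) = R.
\]

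Combining this with the characterization above, every vertex $L$ belongs to some shell $S_r(A)$ with $r \leq R$, which proves $S_r(A) = \emptyset$ for all $r > R$. There is no real obstacle here — the statement reduces to observing that $|L \setminus A|$ and $|A \setminus L|$ are each bounded by the size of their two ambient sets, and the definitions of $d_1, d_2, R$ are precisely tailored to these bounds.
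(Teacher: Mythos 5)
Your proof is correct and follows the same route as the paper: apply the characterization in (\ref{eq:Sr}) and observe that $\min(|L\setminus A|,|A\setminus L|)\leq\min(d_1,d_2)=R$. The paper states this bound without spelling out the two containments ($L\setminus A\subseteq L, A^c$ and $A\setminus L\subseteq A, L^c$); your version just makes that step explicit.
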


\begin{proof} Use (\ref{eq:Sr}) with $\min( |L \backslash A|, |A \backslash L| ) \leq \min (d_1, d_2) = R.$ 
\end{proof}

\begin{lemma} \label{L0}
$S_r(A) = S_{R-r} (A^c).$
\end{lemma}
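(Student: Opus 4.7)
The plan is to parameterize each $L \in J(n,v)$ by the single integer $a := |A \cap L|$ and show that the two quantities $\min(|L \setminus A|, |A \setminus L|)$ and $\min(|L \setminus A^c|, |A^c \setminus L|)$ always sum to $R$. Once that identity is in hand, the characterization (\ref{eq:Sr}) immediately yields the equivalence $L \in S_r(A) \Leftrightarrow L \in S_{R-r}(A^c)$, which is precisely the claimed equality of shells.

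First I would express the four set-difference cardinalities in terms of $a$: namely $|L \setminus A| = v - a$, $|A \setminus L| = k - a$, $|L \setminus A^c| = a$, and $|A^c \setminus L| = (n-k) - (v-a) = n - k - v + a$. Next I would invoke the elementary identity
\[
\min(p,q) + \min(p',q') = \min(p+p',\, p+q',\, q+p',\, q+q'),
\]
which holds because the two minima on the left may be minimized independently. Specializing with $(p,q,p',q') = (v-a,\, k-a,\, a,\, n-k-v+a)$, the four cross-sums are $v$, $n-k$, $k$, $n-v$, so the right-hand side is $\min(v,\, n-k,\, k,\, n-v) = \min(d_1,d_2) = R$ by the definition (\ref{Rd1d2}).

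The main obstacle is essentially just the bookkeeping of matching the four cross-sums to the four quantities whose minimum defines $R$. No case split on $k \leq v$ versus $k \geq v$ is needed, because the characterization (\ref{eq:Sr}) already packages both cases into a single $\min$ expression; the proof is therefore perfectly symmetric in $A \leftrightarrow A^c$ and in $L \leftrightarrow L^c$. As a bonus, the same identity re-derives Lemma \ref{L:D}: the second summand is nonnegative, so $S_r(A)$ can be nonempty only when $r \leq R$.
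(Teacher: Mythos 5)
Your proof is correct and is essentially the same as the paper's: both rest on the characterization (\ref{eq:Sr}) of shell membership and on the fact that $\min(p,q)+\min(p',q')$ equals the minimum of the four cross-sums, which are then identified with $v,n-k,k,n-v$ so the total is $R$. The paper organizes the four relevant cardinalities as the cells of a $2\times 2$ contingency table $\{|L\cap A^c|,|L\cap A|,|L^c\cap A^c|,|L^c\cap A|\}$ rather than reducing to the single parameter $a=|A\cap L|$, but the underlying computation is identical.
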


\begin{proof} For $L \in J(n,v)$, let
\[
\left( \begin{array}{cc} a &b \\ c &d \end{array} \right) =
\left( \begin{array}{cc}
\lvert L \cap A^c \rvert &\lvert L \cap A \rvert \\
\lvert L^c \cap A^c \rvert &\lvert L^c \cap A \rvert
\end{array} \right)
\]
Then $L \in S_r(A)$ for $r = \min ( a, d )$ and $L \in S_s(A^c)$ for
$s = \min ( b, c ).$ 
Where $r$ and $s$ are related to $R$ via
$r+s = \min ( a+b, a+c, b+d, c+d ) 
      = \min ( v, n-k, k, n-v ) = R.$
\end{proof}

The lemma gives a dual description for the shell $S_r(A)$ in terms of the complement $A^c$ of~$A$. 

\begin{example} The four layers $S_0(0123) = \{ 01234, 01235, 01236, 01237 \} \subset J(8,5)$ in the top row of Table \ref{tbl:48} can be described dually as $S_3(4567).$ 
\end{example}

A different dual description is obtained by considering the set $\{ L^c : L \in S_r(A) \}$ as a subset of vertices in the Johnson graph $J(n,n-v)$. Note that this graph is isomorphic to $J(n,v)$ with only difference that the vertex labels $L \subset \{ 0, 1, \ldots, n-1 \}$ are replaced with their complements $L^c =  \{ 0, 1, \ldots, n-1 \} \backslash L$. Two $v$-subsets are adjacent in $J(n,v)$ if and only if their complements are adjacent in $J(n,n-v)$. 
 
\begin{lemma} \label{Ld}
For shells $S_r(A)   \subset J(n,v)$ and $S_{R-r}(A) \subset J(n,n-v)$,
\[
 L \in S_r(A) ~\Leftrightarrow~ L^c \in S_{R-r}(A).
\]
\end{lemma}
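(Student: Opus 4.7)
The statement has the same flavor as Lemma \ref{L0}, and can be established by the same bookkeeping with the $2\times 2$ partition of $\{0,1,\ldots,n-1\}$ induced by a vertex and $A$. The key observation is that passing from $L$ to $L^c$ swaps the roles of the rows of this table, while keeping the columns (determined by $A$ and $A^c$) fixed, so that the two ``min'' quantities defining shell membership are interchanged.

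First I would set up, exactly as in the proof of Lemma \ref{L0}, the four quantities
\[
a = |L\cap A^c|,\ b = |L\cap A|,\ c = |L^c\cap A^c|,\ d = |L^c\cap A|.
\]
Then $|L\setminus A|=a$ and $|A\setminus L|=d$, so by (\ref{eq:Sr})
\[
L\in S_r(A)\subset J(n,v)\ \Longleftrightarrow\ r=\min(a,d).
\]
On the other hand $L^c$ is a vertex of $J(n,n-v)$, and with respect to $A$ we have $|L^c\setminus A|=c$ and $|A\setminus L^c|=b$, hence applying (\ref{eq:Sr}) in $J(n,n-v)$ gives
\[
L^c\in S_s(A)\subset J(n,n-v)\ \Longleftrightarrow\ s=\min(b,c).
\]
Note that the parameter $R$ is invariant under $v\leftrightarrow n-v$, so the same $R$ governs shells in both graphs.

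Next I would observe, exactly as in Lemma \ref{L0}, that the row sums and column sums of the $2\times 2$ table are $v, n-v$ and $n-k,k$. Hence
\[
\min(a,d)+\min(b,c)=\min(a+b,a+c,b+d,c+d)=\min(v,n-k,k,n-v)=R,
\]
so $r+s=R$, i.e. $s=R-r$. Substituting this into the two equivalences above yields
\[
L\in S_r(A)\ \Longleftrightarrow\ r=\min(a,d)\ \Longleftrightarrow\ R-r=\min(b,c)\ \Longleftrightarrow\ L^c\in S_{R-r}(A),
\]
which is the desired conclusion.

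There is really no hard step: the main thing is to be careful that the two shells live in \emph{different} Johnson graphs, so the relevant ``set-difference'' quantities for $L^c\in J(n,n-v)$ are $|L^c\setminus A|$ and $|A\setminus L^c|$, not the quantities for $L$. Once that is set up, the identity $r+s=R$ of Lemma \ref{L0} does all the work.
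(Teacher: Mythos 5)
Your proof is correct and follows essentially the same route as the paper: both set up the $2\times 2$ table of intersection sizes from Lemma \ref{L0}, identify $r=\min(a,d)$ and $s=\min(b,c)$, and conclude $r+s=R$ via the row/column-sum identity. You spell out more explicitly which set-difference quantities are relevant for $L^c$ as a vertex of $J(n,n-v)$, which is a useful clarification, but the underlying argument is the one in the paper.
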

 
 \begin{proof}
As in the proof of Lemma \ref{L0}, $L \in S_r(A)$ for $r = \min(a,d)$ and $L^c \in S_s(A)$ for $s = \min( c,b ).$ So that $r+s = R$.
 \end{proof}
 
\begin{lemma} \label{L1}
\[
B_r(A)^c = B_{R-r-1}(A^c).
\]
\end{lemma}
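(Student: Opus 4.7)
The plan is to reduce both sides of the claimed identity to conditions on the four cardinalities that appear in the $2\times 2$ table used in the proof of Lemma \ref{L0}. Write
\[
a = \lvert L \cap A^c\rvert, \quad b = \lvert L \cap A\rvert, \quad c = \lvert L^c \cap A^c\rvert, \quad d = \lvert L^c \cap A\rvert,
\]
so that $a+b = v$, $c+d = n-v$, $a+c = n-k$, $b+d = k$, and hence
\[
R = \min(v, n-k, k, n-v) = \min(a+b,\, a+c,\, b+d,\, c+d).
\]
Note $|L\setminus A|=a$, $|A\setminus L|=d$, $|L\setminus A^c|=b$, $|A^c\setminus L|=c$.

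By the characterization in equation (\ref{eq:Br}), which is valid whether $|A|\ge v$ or $|A|\le v$ (resp.\ for $|A^c|\ge v$ or $|A^c|\le v$), membership is determined by the symmetric minimum:
\[
L \in B_r(A) \iff r \geq \min(a,d), \qquad L \in B_{R-r-1}(A^c) \iff R-r-1 \geq \min(b,c).
\]
So the goal becomes the purely combinatorial equivalence
\[
r < \min(a,d) \iff R-r-1 \geq \min(b,c).
\]

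The key step, and the only nontrivial observation, is the identity
\[
\min(a,d) + \min(b,c) = R.
\]
This holds because $\min(a,d) + \min(b,c)$ distributes over the sum to give $\min(a+b,\,a+c,\,d+b,\,d+c)$, which is precisely the expression for $R$ above. Given this identity, the claimed equivalence is immediate:
\[
r < \min(a,d) \iff r < R - \min(b,c) \iff \min(b,c) \leq R-r-1,
\]
which is the condition for $L \in B_{R-r-1}(A^c)$.

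Thus I expect the proof to be very short once the $2\times2$ bookkeeping of Lemma \ref{L0} is in place; the only point requiring care is the distribution identity $\min(a,d)+\min(b,c) = \min(a+b,a+c,b+d,c+d)$, which is the same observation that drove Lemma \ref{L0}. No new ideas beyond those already assembled are needed.
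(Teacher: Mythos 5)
Your proof is correct, and it uses the same underlying mechanism as the paper: the $2\times 2$ cardinality table and the identity $\min(a,d)+\min(b,c)=\min(a+b,a+c,b+d,c+d)=R$, which is exactly what the paper establishes in (the proof of) Lemma~\ref{L0}. The paper's version of the argument is more terse — it writes $B_r(A)^c=\cup_{t>r}S_t(A)$ and then invokes Lemma~\ref{L0} as a black box to flip each shell $S_t(A)$ to $S_{R-t}(A^c)$ — whereas you re-derive the same min-identity directly from the membership criterion (\ref{eq:Br}); the two are the same proof, just with the intermediate lemma inlined rather than cited.
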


\begin{proof}
Use that $B_r(A)^c = \cup_{t > r} S_t(A)$ and apply Lemma \ref{L0}.
\end{proof}

\begin{proposition} \label{p:dualjgc}
The dual code of a Johnson graph code $JGC(n,v, k, r)$ is a code $JGC(n,v,n-k,R-r-1)$.
\end{proposition}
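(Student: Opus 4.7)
My plan is to reduce the statement to a classical fact from linear coding: for a linear code $C$ of length $N$ with dual $C^\perp$, a subset $I$ of coordinates is an information set for $C$ if and only if the complement $I^c$ is an information set for $C^\perp$. This comes from putting a generator matrix of $C$ into systematic form $[I_K \mid A]$ indexed by $I$ and $I^c$, whence $[-A^T \mid I_{N-K}]$ is a parity check matrix that is systematic on $I^c$. Granting this, the proposition collapses to a bookkeeping check combining Lemma \ref{L1} with the symmetry of $R$.

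First I would verify that $R = \min(v,n-k,k,n-v)$ is invariant under the substitution $k \mapsto n-k$, so that the same value $R$ governs both $JGC(n,v,k,r)$ and the asserted $JGC(n,v,n-k,R-r-1)$; this is immediate from the definition of $R$ in equation (\ref{Rd1d2}). In particular $R-r-1$ is a legitimate radius for both codes.

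Next, fix any $(n-k)$-subset $A' \subset \{0,1,\dots,n-1\}$ and set $A = (A')^c$, which is a $k$-subset. By the information-set duality above, showing that $B_{R-r-1}(A')$ is an information set for $C^\perp$ is equivalent to showing that its complement $B_{R-r-1}(A')^c$ is an information set for $C = JGC(n,v,k,r)$. Applying Lemma \ref{L1} with $A'$ as the base set and radius $R-r-1$, I get
\[
B_{R-r-1}(A')^c \;=\; B_{R-(R-r-1)-1}((A')^c) \;=\; B_r(A),
\]
and this is an information set of $JGC(n,v,k,r)$ by Definition \ref{d1}. The dimension matches automatically: $|B_{R-r-1}(A')| = N - |B_r(A)| = N - K$, which is the dimension of $C^\perp$.

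I do not anticipate any real obstacle. The only non-routine ingredient is the information-set/dual-information-set complementarity, and that is a standard linear-algebra fact; once invoked, Lemma \ref{L1} and the symmetry of $R$ under $k \leftrightarrow n-k$ do all the remaining work.
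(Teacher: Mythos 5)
Your proof is correct and follows essentially the same route as the paper's: both combine Lemma~\ref{L1} with the standard fact that complements of information sets of $C$ are information sets of $C^\perp$. The only cosmetic difference is that you apply Lemma~\ref{L1} starting from an arbitrary $(n-k)$-subset $A'$ and unwind to $B_r(A)$, whereas the paper starts from the $k$-subset $A$ and reads $B_r(A)^c = B_{R-r-1}(A^c)$ directly; you also make explicit the symmetry of $R$ under $k \leftrightarrow n-k$, which the paper leaves implicit.
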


\begin{proof} The code has the required length and dimension. Lemma \ref{L1} and the fact that the complement of an information set is an information set for the dual code, proves that $B_{R-r-1}(A^c)$ is an information set for any $k$-subset $A$.
\end{proof}

\section{Construction of Johnson Graph Codes} \label{S:Jcons}

We give a general construction for Johnson graph codes of length $N = {n \choose v}$ from MDS codes of length $n$. Section \ref{S:rs} describes in more detail the special case where the length $n$ MDS code is of Reed-Solomon type. Codewords of length $N$ will have their coordinates indexed by the vertices of the Johnson graph $J(n,v)$. The codes are generated by the coordinatization vectors of suitably chosen matrices $M$.
For a $v \times n$ matrix $M$, 
\[
\pi ( M ) = ( \det( M_L ) : L \in J(n,v) ),
\]
 where $M_L$ is the $v \times v$ minor of $M$ with columns in the $v$-subset $L \subset \{ 0, 1, \ldots, n-1 \}$.

\begin{definition} \label{def:code} For given $n$ and $v$, let $C_0$ be a code of length $n$ and let $t \geq 0$. Define a code $C$ of length $N$ as the span of the vectors $\pi(M)$ for all $v \times n$ matrices $M$ with at least $t$ rows in $C_0$.
\end{definition}

We show in two steps that the code $C$ is a Johnson graph code $JGC(n,v,k,r)$ with $k = \dim C_0$ and $r = \min ( v, \dim C_0) -t.$

\begin{lemma} \label{L:jgc1}
The code $C$ has dimension $K =  \lvert B_r(I_0) \rvert,$
where $r = \min ( v, \dim C_0) -t$ and $I_0 \subset \{ 0, 1, \ldots, n-1 \}$ is of size $\dim C_0$.
\end{lemma}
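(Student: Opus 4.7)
The plan is to identify $\pi$ with the Plücker embedding and realize $C$ as a coordinate subspace of the exterior power $\Lambda^v(F^n)$ in a well-chosen basis.

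First I would note that $\pi$ is multilinear and alternating in the rows of $M$, so via the standard identification $e_L \leftrightarrow e_{\ell_1} \wedge \cdots \wedge e_{\ell_v}$ for $L = \{\ell_1 < \cdots < \ell_v\}$, we have $\pi(M) = m_1 \wedge \cdots \wedge m_v$ in $\Lambda^v(F^n)$. Applying multilinearity to the generators of $C$, this gives the clean description
\[
C \;=\; \Lambda^t(C_0) \,\wedge\, \Lambda^{v-t}(F^n) \;\subset\; \Lambda^v(F^n),
\]
the image under the wedge-product map of tensors with at least $t$ factors in $C_0$.

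Since $|B_r(I_0)|$ depends only on $|I_0| = k := \dim C_0$, I would take $I_0$ to be an information set of $C_0$ and let $\{e_i^* : i \in I_0\}$ denote the reduced basis of $C_0$ with $e_i^*$ having a $1$ in coordinate $i$ and $0$ in the other coordinates of $I_0$. Then $\{e_i^* : i \in I_0\} \cup \{e_j : j \in I_0^c\}$ is a basis of $F^n$ (the change-of-basis matrix is block-triangular with identity diagonal blocks), inducing a basis $\{e_I^* \wedge e_J : I \subset I_0,\; J \subset I_0^c,\; |I|+|J|=v\}$ of $\Lambda^v(F^n)$.

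Next I would expand a general generator of $C$ in this basis. A generator is a wedge of $v$ vectors, at least $t$ of which lie in $C_0$ and hence expand over $\{e_i^*\}_{i \in I_0}$; the remaining $v-t$ vectors expand over the full basis. Distributing and using the alternating property, every nonzero resulting term is, up to sign, $e_I^* \wedge e_J$ with $I \subset I_0,\; J \subset I_0^c,\; |I| \geq t,\; |I|+|J|=v$. Conversely, any such basis element is obtained by choosing a $t$-subset of $I$ for the $C_0$-factors and the remaining wedge vectors from $\{e_i^* : i \in I \setminus \text{(that subset)}\} \cup \{e_j : j \in J\}$. Hence $C$ is spanned by these distinct basis elements of $\Lambda^v(F^n)$, which are therefore a basis of $C$, giving
\[
\dim C \;=\; \sum_{a=t}^{\min(v,k)} \binom{k}{a}\binom{n-k}{v-a}.
\]

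Finally I would match this count to $|B_r(I_0)|$ using the two-case formula from the lemma preceding Definition \ref{def:Br}: in the case $k \geq v$ substitute $a = v - i$ (so $i \leq r = v-t$ becomes $a \geq t$), and in the case $k \leq v$ substitute $a = k - i$ and use $\binom{n-k}{n-v-i} = \binom{n-k}{v-a}$. The main obstacle is the bookkeeping in the multilinearity step: one must check carefully that the condition ``at least $t$ rows in $C_0$'' yields exactly the basis vectors with $|I| \geq t$ and no spurious identifications arise from repeated factors. Once that combinatorial translation is set up, the two index substitutions in the last step are short and routine.
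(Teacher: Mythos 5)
Your proof is correct and follows essentially the same route as the paper: both choose a basis of $F^n$ adapted to $C_0$ (the paper's $\{g_i\}$ with $\{g_i : i \in I_0\}$ spanning $C_0$, your reduced basis $\{e_i^*\} \cup \{e_j\}$), identify $\pi(M)$ with the induced basis of $\Lambda^v(F^n)$, and observe that $C$ is spanned precisely by the basis vectors indexed by $L$ with $\lvert L \cap I_0 \rvert \geq t$. The only cosmetic difference is the final step, where the paper identifies this index set directly as $B_r(I_0)$ via (\ref{eq:Br}) while you compute its cardinality and match the sum to the formula in the preceding lemma.
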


\begin{proof} Let $g = \{ g_0, g_1, \ldots, g_{n-1} \}$ be a basis for $F^n$ and let
$\{ g_i : i \in I_0 \}$ be a basis for $C_0$. The vectors $\pi(M)$, for matrices $M$ with rows $\{ g_i : i \in L \}$
such that $\lvert L \cap I_0 \rvert \geq t$, form a basis for $C$. Furthermore,
\[
\lvert L \cap I_0 \rvert = t ~~\Leftrightarrow~~ \min ( \lvert L \backslash I_0 \rvert, \lvert I_0 \backslash L \rvert ) = \min ( v-t, \dim C_0 -t ).
\]
And thus, using (\ref{eq:Br}) with $r = \min ( v, \dim C_0) -t,$ 
\[
\{ \; L \in J(n,v) ~|~ \lvert L \cap I_0 \rvert \geq t \; \} = B_r(I_0).
\]
\end{proof} 

From the lemma it is clear that the dimension of the code $C$ only depends on $\dim C_0$ and not on $C_0$. The occurrence of information sets in $C$ on the other hand depends on the choice of the code $C_0$.  

\begin{lemma} \label{L:jgc2}
For $C$ as in the definition, and for an information set $A_0 \subset  \{ 0, 1, \ldots, n-1 \}$ for $C_0$, the set
\[
 \{ \; L \in J(n,v) ~|~ \lvert L \cap A_0 \rvert \geq t \; \} = B_r(A_0)
\]
is an information set for $C$.
\end{lemma}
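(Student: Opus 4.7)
The strategy is to exhibit a basis of $C$ indexed by $B_r(A_0)$ and then show that the resulting generator matrix, restricted to columns in $B_r(A_0)$, is upper triangular with unit diagonal in a suitable ordering. Because $A_0$ is an information set for $C_0$, Gaussian elimination produces a basis $\{h_i : i \in A_0\}$ of $C_0$ satisfying $(h_i)_j = \delta_{ij}$ for all $i, j \in A_0$. I would extend to a basis $\{g_i\}_{i=0}^{n-1}$ of $F^n$ by setting $g_i = h_i$ for $i \in A_0$ and $g_i = e_i$ for $i \notin A_0$, and then invoke Lemma \ref{L:jgc1} with $I_0 = A_0$ for this particular basis: the vectors $\{\pi(M_L) : L \in B_r(A_0)\}$ form a basis of $C$, where $M_L$ denotes the $v \times n$ matrix with rows $\{g_i : i \in L\}$.

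To verify that $B_r(A_0)$ is an information set for $C$, I will show that the square matrix $G = [\det((M_L)_{L'})]_{L, L' \in B_r(A_0)}$ is invertible. Write $P = L \cap A_0$, $Q = L \setminus A_0$, $P' = L' \cap A_0$, $Q' = L' \setminus A_0$, and group the rows of $(M_L)_{L'}$ as $P \sqcup Q$ and its columns as $P' \sqcup Q'$. The identities $(h_i)_j = \delta_{ij}$ for $i, j \in A_0$ and $(e_i)_j = 0$ for $i \notin A_0$, $j \in A_0$ then yield the block form
\[
(M_L)_{L'} = \begin{pmatrix} (\delta_{ij})_{P \times P'} & * \\ 0 & (\delta_{ij})_{Q \times Q'} \end{pmatrix}.
\]
Ordering $B_r(A_0)$ by $|L \cap A_0|$ decreasing, I claim that $G$ is upper triangular with ones on the diagonal. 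For an entry strictly below the diagonal, $|P| < |P'|$ gives $|Q| > |Q'|$; the $|Q|$ bottom rows of $(M_L)_{L'}$ have their only possibly non-zero entries among the $|Q'|$ columns indexed by $Q'$, so no permutation in the Leibniz expansion can survive and $\det((M_L)_{L'}) = 0$. Within a level set $|P| = |P'|$, the square bottom-right block forces $Q = Q'$ (otherwise it has a zero row), and then the square top-left block forces $P = P'$ by the same reasoning, so only $L = L'$ contributes and $\det((M_L)_L) = 1$.

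The delicate step I anticipate is the pigeonhole vanishing of the sub-diagonal entries of $G$, which relies on the combination of the zero bottom-left block and the simple $\delta_{ij}$ structure of the bottom-right block; in particular, the values of $(h_i)_j$ in the $*$ block never need to be examined. Once this is in place, the upper-triangular, unit-diagonal form of $G$ gives invertibility immediately, and combined with the dimension count in Lemma \ref{L:jgc1} this proves that $B_r(A_0)$ is an information set for $C$.
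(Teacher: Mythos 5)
Your proposal is correct and takes essentially the same approach as the paper: both construct, for each $L \in B_r(A_0)$, the codeword $\pi(M_L)$ from a matrix whose rows are the systematic codewords of $C_0$ for $i \in L\cap A_0$ and unit vectors for $i \in L\setminus A_0$, and both conclude by ordering $B_r(A_0)$ by $|L\cap A_0|$ (you decreasingly, the paper as $S_r(A_0),\dots,S_0(A_0)$) to exhibit a triangular restriction matrix with unit diagonal. Your pigeonhole argument simply spells out why $\det\bigl((M_L)_{L'}\bigr)=0$ whenever $L'\neq L$ and $|L'\cap A_0|\geq |L\cap A_0|$, which the paper asserts in a single line.
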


\begin{proof} The size of $B_r(A_0)$ equals the dimension of $C$. It therefore suffices to prove the independence of the coordinates in~$B_r(A_0)$.
For each $L \in B_r(A_0)$ 
construct a codeword $\pi(M)$ from a $v \times n$ matrix $M$ with two types of rows. For $j \in L \cap A_0$ include as row in $M$ the unique codeword in $C_0$ that is $1$ in $j$ and $0$ in $A_0 \backslash j.$ For $j \in L \backslash A_0$ include as row in $M$ the unit vector that is $1$ in $j$ and $0$ elsewhere.
Upto a permutation of columns that puts the columns in $A_0 \backslash L$ in the leading positions, followed by columns in $L \cap A_0$ and $L \backslash A_0$, $M$ will be of the form
\[
M = \left( \begin{array}{c|c|c|c}
O &I &X &Y \\ \midrule
O &O &I &O 
\end{array}
\right), \quad M_L = \left( \begin{array}{c|c}
I &X \\ \midrule
O &I 
\end{array}
\right).
\]
Clearly, $\det (M_L) =1$.  And $\det (M_{L'}) = 0$ for $L' \neq L$ with $\lvert L' \cap A_0 \rvert \geq \lvert L \cap A_0 \rvert$. The constructed codewords $\pi(M)$, for $L \in S_r(A_0), \ldots, S_1(A_0), S_0(A_0)$, in that order, form an invertible triangular matrix in the positions $B_r(A_0)$. And thus $B_r(A_0)$ is an information set for $C$.  
\end{proof}

\begin{proposition} \label{prop:jgc}
For an MDS code $C_0$, the code $C$ in Definition \ref{def:code} is a Johnson graph code.
\end{proposition}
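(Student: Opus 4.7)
The plan is to verify the three defining requirements of Definition~\ref{d1} for the code $C$: that it has length $N = \binom{n}{v}$, that it has dimension $K = \lvert B_r(A) \rvert$ for every $k$-subset $A$, and that $B_r(A)$ is an information set for every such $A$. Almost all of the work has already been absorbed into Lemma~\ref{L:jgc1} and Lemma~\ref{L:jgc2}; the role of Proposition~\ref{prop:jgc} is simply to explain why the MDS hypothesis on $C_0$ is the right condition to make those two lemmas apply for \emph{every} choice of $A$.

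First I would record that the length of $C$ is $N = \binom{n}{v}$ by construction, since $\pi(M)$ has one coordinate per vertex of $J(n,v)$. Next, Lemma~\ref{L:jgc1} shows that $\dim C = \lvert B_r(I_0) \rvert$ for a single $k$-subset $I_0$ (namely the one indexing a basis of $C_0$), with $r = \min(v, \dim C_0) - t$. Since $|B_r(A)|$ depends only on $|A|$ and not on $A$ itself (as is visible from the formula in the lemma preceding Equation~\eqref{Rd1d2}), this gives $\dim C = \lvert B_r(A) \rvert$ for every $k$-subset $A$.

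The final and decisive step is the information-set property. Here I would invoke the MDS hypothesis in its classical form: a length-$n$, dimension-$k$ MDS code has the property that \emph{every} $k$-subset of its coordinates is an information set. Therefore, for any $k$-subset $A \subset \{0,1,\ldots,n-1\}$, the set $A$ is an information set for $C_0$, and Lemma~\ref{L:jgc2} applies directly with $A_0 := A$ to show that $B_r(A)$ is an information set for $C$. Combining all three facts, $C$ is a Johnson graph code $JGC(n,v,k,r)$.

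I do not expect a substantive obstacle: the only conceptual content is the observation that Lemma~\ref{L:jgc2} is stated for a single information set of $C_0$, while the Johnson graph code definition requires the conclusion for every $k$-subset $A$, and the MDS property is precisely the translation between these two universes. If I wanted to highlight robustness, I might add a short remark that the MDS hypothesis is in fact \emph{equivalent} to the Johnson graph code conclusion, since if some $k$-subset $A$ failed to be an information set of $C_0$, then by the triangular structure in the proof of Lemma~\ref{L:jgc2} one would correspondingly lose independence of the coordinates $B_r(A)$ in $C$.
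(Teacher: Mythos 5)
Your proposal is correct and follows exactly the paper's approach: the paper's proof simply cites Lemma~\ref{L:jgc1} and Lemma~\ref{L:jgc2}, and you have correctly supplied the implicit glue, namely that the MDS hypothesis on $C_0$ is precisely what guarantees every $k$-subset $A$ is an information set for $C_0$, so that Lemma~\ref{L:jgc2} applies uniformly. The closing remark on the converse is a sensible observation but not part of the paper's argument.
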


\begin{proof} 
Lemma \ref{L:jgc1} and Lemma \ref{L:jgc2}.
\end{proof}

\begin{lemma} \label{L:jgc3} For each $L \in S_r(A_0),$ the proof of Lemma \ref{L:jgc2} constructs a codeword $\pi(M)$ that depends on $L$. It has as special property that the coordinate $\pi(M)_L$ is the unique nonzero coordinate in the positions $B_r(A_0)$. Moreover the weight of $\pi(M)$ is at most 
 \[
{ 2t+n-\dim C_0-v \choose t }. 
\]
\end{lemma}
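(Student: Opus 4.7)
The plan is to analyze the block structure of $M$ from the proof of Lemma \ref{L:jgc2} and extract from it two constraints that any $L' \in J(n,v)$ with $\det(M_{L'}) \neq 0$ must satisfy. Write $k_0 = \dim C_0$. The relation in the proof of Lemma \ref{L:jgc1}, together with $L \in S_r(A_0)$, gives $|L \cap A_0| = t$, so in the column partition from Lemma \ref{L:jgc2} the four blocks $A_0 \setminus L$, $L \cap A_0$, $L \setminus A_0$, $(A_0 \cup L)^c$ have sizes $k_0 - t$, $t$, $v - t$, $n - k_0 - v + t$ respectively. The bottom $v - t$ rows of $M$ are supported exactly on the $L \setminus A_0$ columns, and the top $t$ rows vanish on the $A_0 \setminus L$ columns.

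The key step is to derive two necessary conditions for $\det(M_{L'}) \neq 0$. First, the bottom $v - t$ rows of $M_{L'}$ vanish outside $L \setminus A_0$, so full row rank forces (a) $L \setminus A_0 \subseteq L'$. Second, once (a) holds, placing the $L \setminus A_0$ columns last puts $M_{L'}$ in block upper-triangular form with an $I_{v-t}$ in the lower-right corner; the determinant then equals, up to sign, that of the top $t \times t$ block on the remaining $t$ columns. Any of these remaining columns lying in $A_0 \setminus L$ is a zero column of the top block, so we must also have (b) $L' \cap A_0 \subseteq L \cap A_0$. The uniqueness claim is then immediate: for $L' \in B_r(A_0)$ we have $|L' \cap A_0| \geq t$, while (b) gives $|L' \cap A_0| \leq t$, so equality holds and (a) with $|L' \setminus A_0| = v - t$ forces $L' = L$.

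For the weight bound, (a) and (b) together say that every $L'$ in the support of $\pi(M)$ is obtained from $L \setminus A_0$ by adding a $t$-subset of $(L \cap A_0) \cup (A_0 \cup L)^c$, a set of size $t + (n - k_0 - v + t) = 2t + n - k_0 - v$. The number of such choices is $\binom{2t + n - k_0 - v}{t}$, giving the claimed upper bound (not necessarily attained, since the relevant submatrix of $Y$-values may still be singular for some choices). The only point that requires care is the order of the deductions: (b) emerges only after (a) has been used to isolate the top $t \times t$ block, and the rest is combinatorial bookkeeping.
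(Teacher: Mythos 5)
Your proof is correct and follows the same route as the paper: both derive the necessary conditions $L\setminus A_0\subseteq L'\subseteq L\cup A_0^c$ on the support of $\pi(M)$ from the block form of $M$, deduce uniqueness in $B_r(A_0)$ from $|L'\cap A_0|\geq t$ combined with these conditions, and count the admissible $L'$ to get the binomial bound. Your writeup just spells out more explicitly the two steps (unit rows force containment of $L\setminus A_0$; then expanding along those rows forces avoidance of $A_0\setminus L$) that the paper states in compressed form.
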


\begin{proof} For $r = \min ( v, \dim C_0) -t,$ we have, as in the proof of Lemma \ref{L:jgc1}, that $L' \in B_r(A_0)$ if and only if $\mid L' \cap A_0 \mid \geq t$. For $L \in S_r(A_0)$, equality holds and $\mid L \cap A_0 \mid = t.$ It follows that $\lvert L' \cap A_0 \rvert \geq \lvert L \cap A_0 \rvert$ for all $L' \in B_r(A_0)$, and thus, as in the proof of Lemma \ref{L:jgc2}, that $\det (M_{L'}) = 0$ for all $L' \in B_r(A_0)$, $L' \neq L.$
For the weight of $\pi(M)$, note that a full minor $M_{L'}$ in the matrix $M$ is nonsingular only if $M_{L'}$ contains all columns in $L \backslash A_0$ and no columns in $A_0 \backslash L$, i.e., only if
\[
L \cap A_0^c \subset L' \subset L \cup A_0^c. 
\]
For $\mid L \cap A_0 \mid = t$, $\mid L \cap A_0^c \mid = v-t$ and $\mid L \cup A_0^c \mid = n-\dim C_0+t.$
The number of $L'$ such that $M_{L'}$ is nonsingular is therefore at most
\[
{ 2t+n-\dim C_0-v \choose t }. 
\]
\end{proof}

\begin{example}
For the Johnson graph $J(n=5,v=3)$, let 
\[
g = \left( \begin{array}{c}
g_0 \\ g_1 \\ g_2 \\ g_3 \\ g_4 \end{array} \right) = \left( \begin{array}{ccccc}
1 &0 &1 &1 &0 \\ 0 &1 &0 &1 &1 \\ 0 &0 &1 &0 &0 \\  0 &0 &0 &1 &0 \\ 0 &0 &0 &0 &1 \end{array} \right).
\]
Let $C_0 = \text{span}(g_0, g_1)$, $t=2$. Then $r = 0$, $I_0 = \{ 0, 1 \},$ $B_0(I_0)  =
\{ 012, 013, 014 \}$, and the code $C$ is spanned by the first block of rows in the matrix 
\[
 \begin{array}{cc ccl | rccccl | r}
012 &~~ &1&\cdot&\cdot~  &~1&1&\cdot&1&0&\cdot~ &~1 \\
013 &~~ &\cdot&1&\cdot~  &~0&\cdot&1&1&\cdot&0~ &~1  \\
014 &~~  &\cdot&\cdot&1~  &~\cdot&0&1&\cdot&1&1~ &~1  \\  \cmidrule{3-12}
023 &~~ &\cdot&\cdot&\cdot~  &~1&\cdot&\cdot&\cdot&\cdot&\cdot~ &~0  \\
024 &~~  &\cdot&\cdot&\cdot~  &~\cdot&1&\cdot&\cdot&\cdot&\cdot~ &~1  \\
034 &~~  &\cdot&\cdot&\cdot~  &~\cdot&\cdot&1&\cdot&\cdot&\cdot~ &~1  \\
123 &~~  &\cdot&\cdot&\cdot~  &~\cdot&\cdot&\cdot&1&\cdot&\cdot~ &~1 \\
124 &~~  &\cdot&\cdot&\cdot~  &~\cdot&\cdot&\cdot&\cdot&1&\cdot~ &~1  \\ 
134 &~~  &\cdot&\cdot&\cdot~  &~\cdot&\cdot&\cdot&\cdot&\cdot&1~ &~0  \\  \cmidrule{3-12}
234 &~~  &\cdot&\cdot&\cdot~  &~\cdot&\cdot&\cdot&\cdot&\cdot&\cdot~ &~1  \\
\end{array}
\]

Matrix entries $\cdot$ are $0$ independent of the choice of the code $C_0$. Lemma \ref{L:jgc3} gives the number of
remaining entries in a row: ${4 \choose 2}, {2 \choose 1}$ or ${0 \choose 0}$, for rows in the first, second, or third block of rows. 
The presence of additional zeros in the matrix is due to $C_0$ not being MDS. 
The sets $A_0 = \{0,2\}$ and $A_0 = \{1,4\}$ are not information sets for $C_0$. 
For each of the remaining $A_0 \in J(5,2)$, $B_0(A_0)$ is an information set for $C$.
\end{example}

For $t=0$, and given a basis $g = \{ g_0, g_1, \ldots, g_{n-1} \}$ for $F^n$, the proof of Lemma \ref{L:jgc2} assigns to each $L \in J(n,v)$ a unique matrix $M$ and vector $\pi(M)$.
The vectors $\pi(M)$ form an invertible $N \times N$ matrix $\Lambda(g)$. The assignment of $\Lambda(g)$ to an ordered list of vectors $g$ is functorial. That is, if $g$ and $h$ are two ordered lists of vectors and both are interpreted as $n \times n$ matrices, with product $gh$ as $n \times n$ matrices, then $\Lambda(gh) = \Lambda(g) \Lambda(h)$ as product of $N \times N$ matrices. The functoriality property says that the determinant of
a $v \times v$ minor in $gh$ can be expressed in terms of determinants of $v \times v$ minors in $g$ and $h$. Both properties are classical and have short proofs. 

\begin{lemma}[Cauchy-Binet formula]
For a $v \times n$ matrix $A$ and $n \times v$ matrix $B$,
\[
\det(AB) = \pi(A) \cdot \pi(B^T).
\]
\end{lemma}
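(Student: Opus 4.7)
The plan is to prove this by the standard multilinear expansion of $\det(AB)$ in the columns of $AB$, then collect terms. Write the $j$-th column of $AB$ as $(AB)_j = \sum_{i=0}^{n-1} B_{ij}\, A^{(i)}$, where $A^{(i)}$ denotes the $i$-th column of $A$.

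First I would use multilinearity of the determinant in each of its $v$ column arguments to get
\[
\det(AB) = \sum_{(i_1,\dots,i_v) \in \{0,\dots,n-1\}^v} B_{i_1,1} B_{i_2,2} \cdots B_{i_v,v} \, \det\bigl( A^{(i_1)}, A^{(i_2)}, \ldots, A^{(i_v)} \bigr).
\]
By the alternating property, any tuple $(i_1,\dots,i_v)$ with a repeated entry contributes zero, so the sum reduces to tuples whose entries are distinct. Next I would group these by the unordered $v$-subset $L = \{i_1, \dots, i_v\} \subset \{0,1,\dots,n-1\}$: for each $L \in J(n,v)$, fix the ordering $L = \{\ell_1 < \ell_2 < \cdots < \ell_v\}$ and parametrize the contributing tuples by permutations $\sigma \in S_v$ via $i_j = \ell_{\sigma(j)}$.

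Then I would use the alternating property once more to pull out a sign: $\det(A^{(\ell_{\sigma(1)})}, \ldots, A^{(\ell_{\sigma(v)})}) = \sign(\sigma) \det(A_L)$. This turns the contribution of the subset $L$ into
\[
\det(A_L) \sum_{\sigma \in S_v} \sign(\sigma) \, B_{\ell_{\sigma(1)},1} \cdots B_{\ell_{\sigma(v)},v} = \det(A_L) \, \det(B^L),
\]
where $B^L$ is the $v \times v$ submatrix of $B$ consisting of the rows indexed by $L$. Finally I would identify $\det(B^L) = \det((B^T)_L)$, so that summing over all $L \in J(n,v)$ gives
\[
\det(AB) = \sum_{L \in J(n,v)} \det(A_L)\, \det((B^T)_L) = \pi(A)\cdot \pi(B^T).
\]

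There is really no main obstacle — the proof is purely bookkeeping. The one place to be careful is the sign manipulation in the permutation sum, ensuring the sign produced by reordering the columns of $A$ matches the sign convention in the Leibniz expansion of $\det(B^L)$; this is automatic once the subset $L$ is listed in a fixed increasing order.
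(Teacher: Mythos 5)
Your proof is correct, but it takes a genuinely different route from the paper. You use the classical \emph{multilinear expansion} argument: expand $\det(AB)$ by multilinearity in the $v$ columns, discard tuples with repeated indices, group the surviving tuples by their underlying $v$-subset $L$, and observe that the inner sum over $S_v$ reassembles into $\det(B^L)$. The paper instead uses a \emph{block matrix} argument: it compares determinants across the $(v+n)\times(v+n)$ identity
\[
\left( \begin{array}{c|c} I_v & A \\ \hline 0 & I_n \end{array} \right)
\left( \begin{array}{c|c} 0 & -A \\ \hline B & I_n \end{array} \right) =
\left( \begin{array}{c|c} AB & 0 \\ \hline B & I_n \end{array} \right),
\]
reads off $\det(AB)$ from the right side, and recovers $\sum_L \det(A_L)\det(B_L)$ from the left side by a generalized (Laplace) cofactor expansion of the middle factor over the $v\times v$ minors $-A_L$. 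The trade-off: your argument is more self-contained and elementary, requiring only the Leibniz formula and the alternating property, but you must track the permutation signs by hand (which, as you note, works out once $L$ is listed in increasing order). The paper's argument outsources the combinatorics to the Laplace expansion and the multiplicativity of the determinant, giving a shorter write-up at the cost of invoking the generalized cofactor expansion and a conjugation step to align the block positions. Both are standard and both are complete proofs of the identity.
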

\begin{proof}
Compare determinants in the
$(v+n) \times (v+n)$ matrix product
\[ 
\left( \begin{array}{c|c}
I_v &A \\ \midrule
0 &I_n
\end{array}
\right) 
\left( \begin{array}{c|c}
0 &-A \\ \midrule
B &I_n
\end{array}
\right)  =
\left( \begin{array}{c|c}
AB &0 \\ \midrule
B &I_n
\end{array}
\right).
\]
using a cofactor expansion over the minors $-A_L$ of $-A$, for $L \in J(n,v)$, for the second matrix.
{\small \begin{align*}
\det \left( \begin{array}{c|c}
0 &-A \\ \midrule
B &I_n
\end{array}
\right) 
=~& \sum_{L}  \det \left( \begin{array}{c|c|c}
0 &-A_L &0 \\ \midrule
B_L &0 &0 \\ \midrule
0   &0 &I_{n-v}
\end{array}
\right) \\ 
=~ 
&\sum_{L} \det (A_L) \det (B_L) = \pi(A) \cdot \pi(B^T).
\end{align*}}
In the first equality, the blocks $-A_L$ and $B_L$ can be assumed to be in the given positions after replacing the matrix with a conjugate, i.e. by applying the same permutations to both rows and columns.  
\end{proof}

\begin{proposition} \label{P:CD}
Let $D_0$ be the dual code of $C_0$. The dual code $D$ of $C$ is generated by vectors $\pi(M)$, for all $v \times n$ matrices $M$ with at least 
$v + 1 - t$ rows in $D_0$. 
\end{proposition}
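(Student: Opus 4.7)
The plan is to apply the Cauchy--Binet formula to generators of $C$ paired against generators of the proposed dual $D'$, establish $D' \subseteq D$, and then match dimensions via Lemma~\ref{L:jgc1} together with the complementation identity Lemma~\ref{L1}. Here $D'$ denotes the code produced by Definition~\ref{def:code} with $D_0$ in place of $C_0$ and $v+1-t$ in place of $t$.

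For the orthogonality step, let $M$ be a $v \times n$ matrix with at least $t$ rows in $C_0$ and let $N$ be a $v \times n$ matrix with at least $v+1-t$ rows in $D_0$. Cauchy--Binet applied with $A = M$ and $B = N^T$ gives
\[
\pi(M) \cdot \pi(N) = \det(M N^T),
\]
and the entry $(MN^T)_{ij} = \langle M_i, N_j \rangle$ vanishes whenever $M_i \in C_0$ and $N_j \in D_0$. After permuting rows and columns, $MN^T$ therefore contains a $t \times (v+1-t)$ zero block. The $t$ rows meeting that block are supported on at most $v - (v+1-t) = t-1$ columns and are thus linearly dependent, so $\det(MN^T)=0$. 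Hence every generator of $D'$ is orthogonal to every generator of $C$ and $D' \subseteq D$.

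To upgrade the inclusion to an equality, I compare dimensions. Lemma~\ref{L:jgc1} gives $\dim C = \lvert B_r(I_0)\rvert$ with $r = \min(v,k)-t$ and $|I_0|=k=\dim C_0$, and the same lemma applied to the defining data of $D'$ yields $\dim D' = \lvert B_{r'}(I_0^c)\rvert$ with $r' = \min(v,n-k) - (v+1-t)$. Since $\dim D = N - \dim C$, Lemma~\ref{L1} rewrites this as $\dim D = \lvert B_{R-r-1}(I_0^c)\rvert$, so it suffices to verify the scalar identity $R-r-1 = r'$, equivalently
\[
\min\bigl(\min(v,n-k),\,\min(k,n-v)\bigr) - \min(v,k) \;=\; \min(v,n-k) - v.
\]
A short case analysis on the signs of $k-v$ and $(n-k)-v$ collapses both sides to the same value.

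The algebraic core is light: the crux of the orthogonality is the elementary observation that a $v \times v$ matrix containing a $t \times (v+1-t)$ zero block is automatically singular. I expect the main obstacle to be the dimension bookkeeping at boundary regimes where $r$ or $r'$ is negative, where $C$ or $D$ degenerates to $\{0\}$ or $F^N$; these need to be checked by inspection rather than through the generic formula, but the matching is immediate once the identity $R-r-1 = r'$ is established case by case.
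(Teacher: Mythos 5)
Your proof is correct and its core orthogonality step is exactly the paper's: apply Cauchy--Binet, locate the $t \times (v+1-t)$ zero block in $MN^T$, and observe that the $t$ rows meeting it have rank at most $t-1$. Where you add value is in making the dimension count explicit. The paper's proof of Proposition~\ref{P:CD} stops at ``the generators are orthogonal,'' i.e.\ it only exhibits the inclusion $D' \subseteq D$ and leaves the equality of dimensions unstated (by contrast, the parallel Proposition~\ref{P:CD'} does say ``having complementary dimensions, they are dual codes''). You supply that missing half: Lemma~\ref{L:jgc1} gives $\dim C = |B_r(I_0)|$ and $\dim D' = |B_{r'}(I_0')|$ with $|I_0'|=n-k$, Lemma~\ref{L1} converts $N - \dim C$ into $|B_{R-r-1}(I_0^c)|$, and the remaining scalar identity $R - r - 1 = r'$, i.e.\ $\min(v,n-k,k,n-v) - \min(v,k) = \min(v,n-k) - v$, checks out by a routine four-way case split on the signs of $v-k$ and $v-(n-k)$. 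Your caution about the degenerate regimes $r<0$ or $r'<0$ is appropriate but harmless here: in those ranges both $B_\bullet$-counts collapse to $0$ or $N$ simultaneously, so the conclusion persists. Overall this is the intended argument, carried out with a level of care the paper elides.
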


\begin{proof}
For generators $\pi(A) \in C$ and $\pi(B) \in D$, the $v \times v$ matrix $AB^T$ has a $t \times (v+1-t)$ all zero submatrix. The extended $t \times v$ submatrix
of $AB^T$ is of rank at most $t-1$ and thus $\det ( AB^T ) = 0.$ With the lemma, the generators are orthogonal.
\end{proof}


\begin{lemma}[Functoriality of exterior algebras] \label{L:func}
The assignment $g \mapsto  \Lambda(g)$, that maps a $n \times n$ matrix $g$ to a $N \times N$ matrix $\Lambda(g)$, satisfies
\[
\Lambda(gh) = \Lambda(g) \Lambda(h).
\]
\end{lemma}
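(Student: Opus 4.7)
The plan is to verify the identity entry by entry, reducing the functoriality statement to a single application of the Cauchy--Binet lemma proved just above. Both $\Lambda(gh)$ and $\Lambda(g)\Lambda(h)$ are $N \times N$ matrices whose rows and columns are indexed by the vertices of $J(n,v)$. By construction, the $(L,L')$-entry of $\Lambda(g)$ is $\det(g_{L,L'})$, the determinant of the $v \times v$ submatrix of $g$ with row set $L$ and column set $L'$; indeed, for $M$ the $v \times n$ matrix built from the rows of $g$ indexed by $L$, the $L'$ coordinate of $\pi(M)$ is exactly $\det(g_{L,L'})$. So it suffices to show, for each pair $L,L' \in J(n,v)$, that
\[
\det\bigl((gh)_{L,L'}\bigr) \;=\; \sum_{L'' \in J(n,v)} \det(g_{L,L''})\,\det(h_{L'',L'}).
\]

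Next I would factor the left-hand side as a product of a short matrix and a tall matrix, preparing the ground for Cauchy--Binet. Let $A$ be the $v \times n$ matrix obtained from $g$ by keeping only the rows indexed by $L$, and let $B$ be the $n \times v$ matrix obtained from $h$ by keeping only the columns indexed by $L'$. Then, entry by entry, $AB = (gh)_{L,L'}$, so the left-hand side equals $\det(AB)$. Applying the Cauchy--Binet lemma to the pair $(A,B)$ gives
\[
\det(AB) \;=\; \pi(A) \cdot \pi(B^T) \;=\; \sum_{L'' \in J(n,v)} \det(A_{L''})\,\det\bigl((B^T)_{L''}\bigr).
\]

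The last step is to match these minors with the entries of $\Lambda(g)$ and $\Lambda(h)$. By construction $A_{L''}$ is the $v \times v$ submatrix of $g$ with rows $L$ and columns $L''$, so $\det(A_{L''}) = \det(g_{L,L''}) = \Lambda(g)_{L,L''}$. Likewise $(B^T)_{L''}$ consists of the rows of $B^T$ indexed by $L''$, which are the columns of $B$ indexed by $L''$, which in turn are the entries of $h$ in rows $L''$ and columns $L'$; hence $\det((B^T)_{L''}) = \det(h_{L'',L'}) = \Lambda(h)_{L'',L'}$. Substituting back gives exactly $(\Lambda(g)\Lambda(h))_{L,L'}$, completing the proof.

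The argument is essentially just bookkeeping around Cauchy--Binet, so the main obstacle is purely notational: one must be careful to distinguish row-restrictions from column-restrictions and to track that passing from $B$ to $B^T$ converts column-indexed minors into row-indexed ones. Once these conventions are pinned down, the identity drops out with no further work.
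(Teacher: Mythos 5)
Your proof is correct and is exactly what the paper has in mind: the paper's proof consists of the single sentence ``Evaluate entries of $\Lambda(gh)$ using the Cauchy-Binet formula,'' and your argument is a careful unfolding of precisely that computation. The bookkeeping of row- and column-restrictions and the transpose step are handled correctly, so this fills in the same route at a greater level of detail.
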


\begin{proof} Evaluate entries of $\Lambda(gh)$ using the Cauchy-Binet formula.
\end{proof}

The functorial property $\Lambda(gh) = \Lambda(g) \Lambda(h)$ does not depend on the chosen orderering for the rows and columns in $\Lambda(g)$, i.e.,
on the ordering of the vertices in $J(n,v)$. However, the matrix $\Lambda(g)$, and in particular its shape, depends on the chosen ordering.
The ordering that we use to define Johnson graph codes is in general different form the lexicographic ordering and depends on $k$. Let $E_0 = \{ 0,\ldots,k-1\}$.  A vertex $L'$ preceeds $L$ if $\lvert L' \cap E_0 \rvert > \lvert L \cap E_0 \rvert.$ In case of intersections of equal size we order $L'$ and $L$ lexicographically. The partitioning of $J(n,v)$ into shells $S_r(E_0)$, for $r=0,1,\ldots,R,$
gives the matrix $\Lambda(g)$ a block structure, such that shells appear in the order $S_0(E_0), S_1(E_0), \ldots, S_R(E_0),$ and such that rows and columns within the same shell are ordered lexicographically. We call the modified order the k-lexicographic order, with notation $L' \leq_k L.$ It is a total  order on the vertices of $J(n,v)$. 

\begin{lemma}  \label{L:upp}
For an upper triangular matrix $g$, the matrix $\Lambda(g)$ is upper triangular whenever the vertex ordering on $J(n,v)$ is a refinement of the Bruhat order,
defined by the rule $(\beta_0,\ldots, \beta_{v-1}) \leq (\alpha_0,\ldots,\alpha_{v-1})$ if $\beta_i \leq \alpha_i$, for all $i$. Both the lexicographic and the $k$-lexicographic order on $J(n,v)$ have this property.
\end{lemma}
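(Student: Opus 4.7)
The plan is to reduce the claim to a single combinatorial fact about minors of upper triangular matrices, and then to check that lex and $k$-lex are both compatible with the Bruhat order on $J(n,v)$. By construction, $\Lambda(g)_{L,L'} = \det(g[L,L'])$ is the $v \times v$ minor of $g$ with rows indexed by $L = \{\alpha_0 < \cdots < \alpha_{v-1}\}$ and columns indexed by $L' = \{\beta_0 < \cdots < \beta_{v-1}\}$. Upper triangularity of $\Lambda(g)$ thus amounts to the statement that this minor vanishes whenever $L$ strictly follows $L'$ in the chosen ordering, so it suffices to prove: if $g$ is upper triangular and $\det(g[L,L']) \neq 0$, then $\alpha_i \leq \beta_i$ for every $i$, i.e.\ $L \leq L'$ in the Bruhat order. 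Upper triangularity in any refinement is then immediate by contrapositive.

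For the combinatorial core, expand the determinant. A nonzero contribution requires a permutation $\sigma$ of $\{0,\ldots,v-1\}$ with $g_{\alpha_i, \beta_{\sigma(i)}} \neq 0$ for every $i$, which by upper triangularity of $g$ forces $\alpha_i \leq \beta_{\sigma(i)}$. Suppose for contradiction that $\alpha_s > \beta_s$ for some $s$. Then for every $i \geq s$ and $j \leq s$ we have $\alpha_i \geq \alpha_s > \beta_s \geq \beta_j$, so $\sigma$ restricted to $\{s, s+1, \ldots, v-1\}$ must take values in $\{s+1, \ldots, v-1\}$. This is impossible on cardinality grounds, establishing the desired Bruhat inequality.

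It remains to verify that both orderings refine Bruhat. For the lexicographic order, if $L \leq_{\mathrm{Bruhat}} L'$ with $L \neq L'$, let $i$ be the smallest index where the tuples disagree; then $\alpha_i < \beta_i$, so $L$ precedes $L'$ lexicographically. For $k$-lex, observe that $\alpha_i \leq \beta_i$ combined with $\beta_i < k$ forces $\alpha_i < k$, hence $|L \cap E_0| \geq |L' \cap E_0|$. Either the inequality is strict, in which case $L$ precedes $L'$ by the shell rule, or equality holds and $L, L'$ sit in the same shell $S_r(E_0)$, where the lex argument just given applies.

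The only substantive step is the pigeonhole in the second paragraph; the rest is essentially bookkeeping once the Bruhat statement is in place. It is worth noting that the functorial identity $\Lambda(gh)=\Lambda(g)\Lambda(h)$ is not needed for this lemma: what is at stake is purely how a Pl\"ucker-type embedding interacts with the flag of subspaces preserved by an upper triangular $g$, together with the fact that both candidate orderings on $v$-subsets dominate the coordinatewise partial order.
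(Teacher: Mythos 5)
Your proof is correct and takes essentially the same route as the paper: both reduce upper triangularity of $\Lambda(g)$ to the claim that a minor of an upper triangular matrix vanishes unless its row indices are coordinatewise $\leq$ its column indices, and then check that lex and $k$-lex refine the Bruhat order. The only difference is that the paper states the minor-vanishing fact without proof, while you supply the permutation-expansion/pigeonhole justification — a useful elaboration, not a different argument.
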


\begin{proof}
The entry $\Lambda(g)_{\beta,\alpha} = \det ( g_{ij} : i \in \beta, j \in \alpha )$ is nonzero only if $\beta_i \leq \alpha_i$ for all $i$, i.e. only if
$\beta \leq \alpha$ in the Bruhat order. Clearly this implies $\beta \leq \alpha$ in the lexicographic order as well as $|\beta \cap E_0| \geq |\alpha \cap E_0|$.
And therefore $\beta \leq_k \alpha$.
\end{proof}

A square matrix has all its pivots on the main diagonal if it is full rank and if it reduces to echelon form without row permutations. 
Clearly this is the case if and only if the matrix has a $LU$ factorization.

\begin{proposition} \label{P:LU}
Let $\Lambda(g)$ be defined with an ordering of rows and columns that refines the Bruhat order. If the matrix $g$ has a $LU$ decomposition then so has
the matrix $\Lambda(g)$.
\end{proposition}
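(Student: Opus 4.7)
The plan is to lift an $LU$ decomposition $g=LU$ to the exterior level via the functoriality in Lemma \ref{L:func}, obtaining $\Lambda(g)=\Lambda(L)\Lambda(U)$, and then to argue separately that $\Lambda(U)$ is upper triangular with nonzero diagonal and that $\Lambda(L)$ is lower triangular with nonzero diagonal. The upper factor is handled immediately by Lemma \ref{L:upp}, since the chosen ordering of vertices of $J(n,v)$ refines the Bruhat order. For the diagonal entries, I would note that $\Lambda(U)_{\beta,\beta}=\det(U_{ij}:i,j\in\beta)=\prod_{i\in\beta}U_{ii}$, which is nonzero because $U$ has nonzero diagonal entries (this is what it means for $g$ to admit an $LU$ decomposition).

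For the lower factor $\Lambda(L)$, I would use the transpose trick. By expanding a $v\times v$ minor using the Leibniz formula one checks that $\Lambda(L^T)=\Lambda(L)^T$, since replacing $L$ by $L^T$ simply swaps row- and column-index sets in each minor. Now $L^T$ is upper triangular with nonzero diagonal, so by Lemma \ref{L:upp} the matrix $\Lambda(L^T)$ is upper triangular with nonzero diagonal; transposing gives that $\Lambda(L)$ is lower triangular with nonzero diagonal. Combining, $\Lambda(g)=\Lambda(L)\Lambda(U)$ exhibits an $LU$ decomposition of $\Lambda(g)$, since $\Lambda(L)$ is lower triangular with unit-type invertible diagonal (one can in fact normalize so that $L$ has ones on the diagonal, in which case $\Lambda(L)$ does as well).

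The only genuinely substantive point is the identity $\Lambda(L^T)=\Lambda(L)^T$; everything else is bookkeeping against Lemmas \ref{L:func} and \ref{L:upp}. This identity is not stated explicitly in the paper, but it is essentially the statement that the Plücker coordinates of a matrix and its transpose are indexed by swapped row/column sets, and it can be derived in one line from the definition $\pi(M)=(\det M_L)$ applied to both the row-minor and column-minor conventions. I expect this to be the main (and only) obstacle in turning the plan into a formal proof, and it is a routine verification.
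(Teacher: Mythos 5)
Your proof follows the paper's own route: functoriality (Lemma \ref{L:func}) gives $\Lambda(g)=\Lambda(L)\Lambda(U)$, and Lemma \ref{L:upp} gives the triangularity of the factors. The one difference is that the paper's proof silently applies Lemma \ref{L:upp} to the lower-triangular factor even though the lemma is only stated for upper-triangular input, whereas you explicitly supply the needed step via the transpose identity $\Lambda(L^{T})=\Lambda(L)^{T}$ (which is correct, since $\Lambda(g^{T})_{\beta,\alpha}=\det(g_{ji}:i\in\beta,\,j\in\alpha)=\det(g_{ij}:i\in\alpha,\,j\in\beta)=\Lambda(g)_{\alpha,\beta}$); this is a minor but genuine tightening of the paper's argument.
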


\begin{proof}
If $g = LU$ then by functoriality $\Lambda(g) = \Lambda(L) \Lambda(U)$. For the given order, $\Lambda(L)$ is lower triangular and $\Lambda(U)$ is upper triangular by Lemma \ref{L:upp}
\end{proof}

Let $g = \{ g_0, g_1, \ldots, g_n \}$ be a basis for $F^n$. We say that $g$ is in block form, with respect to a code $C_0$ and a choice of information set $A_0$ for $C_0$, if the coordinates are ordered such that $A_0 = \{ 0,1,\ldots, k-1 \}$, if $C_0$ is the span of rows $\{ g_0, g_1, \ldots, g_{k-1} \}$, and if the remaining rows are zero in the positions $A_0$. 
\[ 
\text{block form:}~g =  \left( \begin{array}{c|c}
G_0 &G \\ \midrule
0 &G_1
\end{array}
\right), \qquad \text{systematic form:}~g =  \left( \begin{array}{c|c}
I &G \\ \midrule
0 &- I
\end{array}
\right).
\]
A matrix in block form can be reduced, using a combination of column permutations within bocks of columns and row operations within blocks of rows, to a block form with $G_0 = I$ and $G_1 = -I.$ In the reduced systematic form, $g^2 = I_n$. Thus, by Lemma \ref{L:func}, also $\Lambda(g)^2 = I_N$, independent of the ordering of vertices in $J(n,v).$ 

\begin{lemma}
For a matrix $g$ in block form, and for a $k-$lexicographic ordering of the vertices in $J(n,v)$, 
the matrix $\Lambda(g)$ is an upper triangular block matrix. Moreover, if $g$ is in systematic form then the
diagonal blocks are identity matrices up to sign. 
\end{lemma}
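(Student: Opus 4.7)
The plan is to read off the entries of $\Lambda(g)$ directly from the block shape of $g$. For $v$-subsets $\beta,\alpha$, set $b = |\beta \cap E_0|$ and $a = |\alpha \cap E_0|$, and independently permute the rows and columns of $g_{\beta,\alpha}$ so that the indices belonging to $E_0$ come first in each. The zero block of $g$ then contributes a $(v-b)\times a$ zero block in the bottom-left, while the top-left $b\times a$ and bottom-right $(v-b)\times(v-a)$ blocks are carved out of $G_0$ and $G_1$ respectively. Since row and column permutations change $\det g_{\beta,\alpha}$ only by a sign, the vanishing pattern of $\Lambda(g)_{\beta,\alpha}$ is determined entirely by this shape.

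To establish block upper triangularity, I would Leibniz-expand the reshaped $v\times v$ matrix: only permutations $\sigma$ that route each of the $v-b$ bottom rows into one of the $v-a$ right columns survive, since any other target hits the zero block. Such an injection exists only when $v-b \leq v-a$, i.e.\ $a \leq b$. Recalling that $L \in S_r(E_0) \iff |L \cap E_0| = \min(v,k)-r$, the $k$-lexicographic order places vertices of larger $E_0$-intersection first, so $a>b$ is precisely the condition that $\alpha$ lies in a strictly earlier shell than $\beta$. Thus $\Lambda(g)_{\beta,\alpha}=0$ whenever $(\beta,\alpha)$ lies strictly below the shell-diagonal, which is exactly the block upper triangular property.

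For the systematic case $G_0 = I_k$ and $G_1 = -I_{n-k}$, I would restrict to a diagonal shell block, where $a=b$. Then $X$ and $Z$ are square, carved out of $I_k$ and $-I_{n-k}$, so $\det g_{\beta,\alpha} = \det(X)\det(Z)$. A submatrix of the identity is nonsingular only when its row set equals its column set, forcing $\beta \cap E_0 = \alpha \cap E_0$ and $\beta \cap E_0^c = \alpha \cap E_0^c$, hence $\beta=\alpha$. On the diagonal the value equals $\det(I_a)\det(-I_{v-a}) = (-1)^{v-a}$, so each diagonal shell block is $(-1)^{v-a}$ times the identity. The only delicate point in the argument is the permutation sign used to reshape $g_{\beta,\alpha}$: it is irrelevant for the vanishing statement, and on the diagonal $\beta=\alpha$ the same permutation acts on rows and columns so its sign cancels in conjugation, giving the expected $\pm 1$ diagonal entries.
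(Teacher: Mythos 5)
Your proof is correct and takes essentially the same approach as the paper: the block zero in $g$ produces a $(v-b)\times a$ zero block in the minor $g_{\beta,\alpha}$, which forces $\det g_{\beta,\alpha}=0$ whenever $a>b$, and in systematic form the diagonal shell blocks are handled by noting that submatrices of $I_k$ and $-I_{n-k}$ are nonsingular only when their row and column index sets coincide. One small simplification you could make: since $E_0=\{0,\ldots,k-1\}$ consists of the smallest indices and the elements of $\beta$ and $\alpha$ are conventionally listed in increasing order, the $E_0$-indices already appear first in both, so no row/column permutation of $g_{\beta,\alpha}$ is needed and the sign bookkeeping you flag as the ``delicate point'' never arises.
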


\begin{proof}
As in the proof of Lemma \ref{L:upp}, $\Lambda(g)_{\beta,\alpha}$ is nonzero only if $\lvert \beta \cap E_0 \rvert \geq \lvert \alpha \cap E_0 \rvert$.
The blocks $I$ and $-I$ in $g$ imply that the entry $\Lambda(g)_{\beta,\alpha}$ is nonzero for $\lvert \beta \cap E_0 \rvert = \lvert \alpha \cap E_0 \rvert$ only if $\beta = \alpha.$ 
\end{proof}

\section{Dual constructions} \label{S:dual}

In the previous section we defined codes of length $N = {n \choose v}$ with coordinates indexed by the vertices of the graph $J(n,v)$.
Codes are generated by vectors
\[
\pi ( M ) = ( \det( M_L ) : L \in J(n,v) ),
\]
for $v \times n$ matrices $M$. A vertex $L \in J(n,v)$ has complement $L^c \in J(n,n-v).$ As pointed out earlier replacing $L$ with $L^c$ amounts to a relabeling of vertices for the same graph. With the relabeling, the matrices in the code construction are replaced by $n-v \times n$ matrices. We describe
the relation between the two different cases of the same construction and include a formal verification of their equivalence. In particular, for the code $C$ and its dual code $D$, we find an equivalent pair of dual codes $C'$ and $D'$.
 
For a layer $L$ with complement $L^c$ reorder the columns in a $n \times n$ identiy matrix $I_n$ as
$\left( \begin{array}{c|c}
I_L &I_{L^c} \end{array} \right)$ and write $\sign(L) = \det( I_L \vert I_{L^c}).$
The coordinatization vector $\pi(M)$ is defined in terms of $v \times v$ determinants.
\[
\pi(M)_L = \det ( M_L ) = \det ( M I_L ).
\]
A different way to coordinatize the matrix $M$ is by extending $M$ with unit vectors $e_i$, for $i \in L^c$, 
and then taking the determinant of a $n \times n$ matrix. 
\[
\pi'(M)_L = \det \left( \begin{array}{c}
M \\ \midrule {I_{L^c}}^T \end{array} \right).
\]
The two approaches are equivalent and give vectors that are equal up to sign in each coordinate. From
\[
\left( \begin{array}{c}
M \\ \midrule {I_{L^c}}^T \end{array} \right) 
\left( \begin{array}{c}
I_L~ \vert ~I_{L^c} \end{array} \right) = \left( \begin{array}{c|c}
M_L &M_{L^c} \\ \midrule 0 &I_{n-v} \end{array} \right)
\]
it follows that 
\begin{equation} \label{eq:pi'}
\pi'(M)_L = \sign(L) \pi(M)_L. 
\end{equation}
The function $\sign(L) = \det ( I_L~ \vert ~I_{L^c} )$ depends only on $\sum_{i \in L} i$~mod~$2$ and is normalized such that 
$\sign(L) = 1$ for $L = \{ 0,1,\ldots, v-1 \}$. 

\begin{definition}
Given a code $C$ with coordinates indexed by $J(n,v)$, define the signed version $C'$ of $C$ as the equivalent code obtained by changing codewords in each of the coordinates $L \in J(n,v)$ by a factor $\sign(L)$.
\end{definition}

For a code $C$ generated by vectors $\pi(M)$, the code $C'$ is the code generated by vectors $\pi'(M)$, for the same matrices $M$. Let $D$ be the dual code of $C$ with signed version $D'$. We use the following lemma to describe $D'$ directly in terms of $C$, for the Johnson graph code $C$ in Definition \ref{def:code}. 

\begin{lemma} \label{L3}
Let $A$ be a $v \times n$ matrix and let $B$ be a $(n-v) \times n$ matrix such that the row spaces of $A$ and $B$ have nonzero intersection.
Then 
\[
\sum_L \pi'(A)_L \pi(B)_{L^c} = \sum_L \sign(L) \det (A_L) \det (B_{L^c}) = 0.
\]
\end{lemma}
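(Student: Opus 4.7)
The plan is to interpret the sum as a Laplace expansion of the determinant of the $n \times n$ block matrix obtained by stacking $A$ on top of $B$. Under the hypothesis, that determinant vanishes, which gives the identity directly.

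First, I would form the $n \times n$ matrix
\[
N = \left( \begin{array}{c} A \\ \midrule B \end{array} \right)
\]
and apply the generalized Laplace expansion along the first $v$ rows. This gives
\[
\det(N) = \sum_{L \in J(n,v)} \epsilon(L) \det(A_L) \det(B_{L^c}),
\]
where $\epsilon(L)$ is the sign of the permutation of the column indices $\{0,1,\ldots,n-1\}$ that places those in $L$ first (in increasing order) followed by those in $L^c$ (in increasing order). Equivalently $\epsilon(L) = \det(I_L \mid I_{L^c}) = \sign(L)$, matching the normalization in the paper (indeed $\sign(\{0,1,\ldots,v-1\}) = 1$ as required). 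This identifies the sum in the lemma with $\det(N)$ up to a factor of $1$.

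Second, I would invoke the hypothesis: if the row spaces of $A$ and $B$ intersect nontrivially, then the $v + (n-v) = n$ rows of $N$ together span a subspace of dimension at most $n - 1$, so the rows are linearly dependent and $\det(N) = 0$. Combining with the Laplace expansion gives the claimed vanishing $\sum_L \sign(L)\det(A_L)\det(B_{L^c}) = 0$, and the equality $\pi'(A)_L = \sign(L)\pi(A)_L = \sign(L)\det(A_L)$ recorded in~\eqref{eq:pi'} rewrites this as $\sum_L \pi'(A)_L \pi(B)_{L^c} = 0$.

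The only subtle point, and what I expect to be the main bookkeeping obstacle, is checking that the sign produced by the Laplace expansion really is the function $\sign(L) = \det(I_L \mid I_{L^c})$ of the paper, and not some opposite convention. This can be settled by computing both signs as the parity of the number of inversions needed to interleave the increasing sequences of $L$ and $L^c$ back into the natural order, or equivalently by noting that left-multiplication by the permutation matrix $(I_L \mid I_{L^c})^T$ transforms $N$ into the block form whose determinant is $\det(A_L)\det(B_{L^c})$ up to rearrangement of its columns; tracking this one permutation yields exactly the factor $\sign(L)$.
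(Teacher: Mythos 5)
Your proof is correct and matches the paper's argument: the paper likewise identifies the middle expression as the Laplace (cofactor) expansion along the first $v$ rows of the stacked matrix $\left( \begin{smallmatrix} A \\ B \end{smallmatrix} \right)$, whose determinant vanishes because the nontrivial intersection of the row spaces forces a linear dependence among the $n$ rows. Your extra care checking that the Laplace sign agrees with $\sign(L) = \det(I_L \mid I_{L^c})$ is a reasonable addition but does not change the route.
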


\begin{proof} The first equality uses (\ref{eq:pi'}). The expression in the middle is the cofactor expansion, over the minors $A_L$ of $A$, for $L \in J(n,v)$,
for the determinant 
\[
\det \left( \begin{array}{c} A \\ \midrule B \end{array} \right) = 0.
\]
\end{proof}

Let $C_0$ be a code of length $n$. The code $C$ in Definition \ref{def:code} is generated by vectors $\pi(M)$, for all $v \times n$ matrices $M$ with at least $t$ rows in $C_0$.

\begin{proposition} \label{P:CD'}
The signed version $D'$ of the dual code $D$ of $C$ is generated by vectors $\pi(M)$, for all $(n-v) \times n$ matrices $M$ with at least 
$\dim C_0 + 1 - t$ rows in $C_0$. 
\end{proposition}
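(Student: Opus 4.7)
My plan is to exhibit orthogonality between generators of $C'$ and the proposed generators of $D'$ using Lemma~\ref{L3}, and then to match the dimensions of the two spans.

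First, I would set up the orthogonality. Let $k = \dim C_0$, and fix a $v \times n$ matrix $A$ with at least $t$ rows in $C_0$ together with an $(n-v) \times n$ matrix $M$ with at least $k+1-t$ rows in $C_0$. The row spaces of $A$ and $M$ must intersect nontrivially: each contains a subspace of $C_0$, of dimensions at least $t$ and at least $k+1-t$, whose dimensions sum to $k+1 > k = \dim C_0$. Lemma~\ref{L3} then yields
\[
\sum_L \sign(L)\det(A_L)\det(M_{L^c}) = 0.
\]
Identifying $L \in J(n,v)$ with $L^c \in J(n,n-v)$, I would view $\pi(M)$ as the element of $F^N$ with $\pi(M)_L := \det(M_{L^c})$, so that the displayed identity reads $\pi'(A)\cdot \pi(M) = 0$. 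Since $C'$ is spanned by the vectors $\pi'(A)$ and $(C')^\perp = D'$ (a direct consequence of $C^\perp = D$ together with $\sign(L)^2 = 1$), this gives $\pi(M) \in D'$.

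Next, I would compute the dimension of the span $\tilde D$ of these $\pi(M)$'s. Through the complement identification, $\tilde D$ is precisely the code produced by Definition~\ref{def:code} applied with $v$ replaced by $n-v$ and $t$ replaced by $k+1-t$; Lemma~\ref{L:jgc1} then gives $\dim \tilde D = |B_{r'}(I_0)|$ in $J(n,n-v)$, where $r' = \min(n-v,k) - (k+1-t)$ and $|I_0| = k$. On the other hand, Lemma~\ref{Ld} yields $|S_s(I_0)|_v = |S_{R-s}(I_0)|_{n-v}$ for the common quantity $R = \min(v,n-v,k,n-k)$, and summing over $s$ gives
\[
N - |B_r(I_0)|_v = |B_{R-r-1}(I_0)|_{n-v}
\]
for $r = \min(v,k) - t$. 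Since $\dim C = |B_r(I_0)|_v$ and $\dim D' = N - \dim C$, matching dimensions reduces to the single identity $r' = R - r - 1$.

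I expect the main obstacle to be this final identity, namely
\[
\min(n-v,k) - (k+1-t) = \min(v,n-v,k,n-k) - \min(v,k) + t - 1.
\]
It is routine but requires a short case split on whether $v \leq k$ and on whether $v+k \leq n$; in each of the four resulting cases both sides collapse to either $t-1$ or $n-v-k-1+t$. Once that identity is verified, combining $\tilde D \subseteq D'$ with $\dim \tilde D = \dim D'$ completes the argument.
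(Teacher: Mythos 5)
Your proof follows the same approach as the paper: orthogonality via Lemma~\ref{L3}, then a dimension count to upgrade containment to equality, and the dimension check you carry out (via Lemmas~\ref{L:jgc1} and~\ref{Ld} and the case-split identity $\min(n-v,k)+\min(v,k)-k=\min(v,n-v,k,n-k)$) is exactly the computation the paper leaves implicit behind the phrase ``having complementary dimensions.'' One small gap to close: before asserting that the row spaces of $A$ and $M$ intersect nontrivially inside $C_0$, you should first reduce to the case that $A$ and $M$ have full row rank (otherwise $\pi(A)=0$ or $\pi(M)=0$ and orthogonality is trivial), since only then are the $t$ and $\dim C_0+1-t$ designated rows guaranteed to span subspaces of $C_0$ of those dimensions -- the paper states this reduction explicitly.
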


\begin{proof}
Let $\pi(A)$ and $\pi(B)$ be generators, such that $A$ has at least $t$ rows in $C_0$ and $B$ has at least $\dim C_0 + 1 -t$ rows in $C_0$.
We may assume that both $A$ and $B$ have independent rows, for otherwise $\pi(A)=0$ or $\pi(B)=0$.
Thus the row spaces of $A$ and $B$ have a nonzero intersection inside $C_0$.
With the lemma, the signed vector $\pi'(A)$ and the vector $\pi(B)$ are orthogonal.
Equivalently, the vector $\pi(A)$ and the vector $\pi'(B)$ are orthogonal. Thus $C$ and $D'$ are orthogonal spaces. Moreover, having complementary dimensions, they are dual codes.
\end{proof}

\begin{table} 
\begin{center}
$\begin{array}{lccccccc}
\toprule
&\quad &~~~C_0~~~ &\#~\text{rows in}~C_0 \geq &\quad &~~~D_0~~~ &\#~\text{rows in}~D_0 \geq \\ \midrule
J(n,v) & &C &t  & &D &v+1-t \\
J(n,n-v) & &D' &\dim C_0 +1-t & &C' &\dim D_0-v+t \\
\bottomrule
\end{array}$
\end{center}
\bigskip
\caption{Code construction for the codes $C, D, D'$ and $C'$.} \label{T:cons}
\end{table}

Table \ref{T:cons} summarizes the choice of generators $\pi(M)$ for the codes $C$, $D$, $D'$~and~$C'$ using the construction in Definition \ref{def:code}.
Proposition~\ref{P:CD} relates constructions in the same row and Proposition~\ref{P:CD'} relates constructions in the same column.
The codes $C$ and $D$, and therefore also the equivalent codes $C'$ and $D'$, are different from the trivial codes $0$~and~$F^N$ only if 
$0 < t \leq \dim C_0$ and $0 < v+1-t \leq \dim D_0.$

For comparison, Lemma \ref{L:jgc2} uses generators $\pi(M)$ for $C$ with $M$ of the form
\[
M = \left( \begin{array}{c|c|c|c}
O &I &X &Y \\ \midrule
O &O &I &O 
\end{array} \right).
\]
The codes $C$ and $D$ are nontrivial only if the column block sizes, from left to right, satisfy  
\[
\dim C_0 -t \geq 0, ~t > 0, ~v-t \geq 0, ~\dim D_0 -v+t > 0.
\]

The construction for $D$ in Table \ref{T:cons} is given by Proposition \ref{P:CD} and the construction for $D'$ by Proposition \ref{P:CD'}. The construction for $C'$ uses the two propositions in combination, either by considering $C'$ as dual of $D'$, or by considering $C'$ as signed dual of $D$. We include a direct proof for the construction of $C'$.
It is based on the following lemma.

\begin{lemma} \label{L:CC'}
Let $M$ and $N$ be full rank matrices of sizes $v \times n$ and $(n-v) \times n$, respectively, with orthogonal row spaces.
Then $\pi(N)$ is equal to $\pi'(M)$ up to a scalar factor. 
\end{lemma}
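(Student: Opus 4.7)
The claim, read through the natural bijection $J(n,v)\leftrightarrow J(n,n-v)$ given by $L\leftrightarrow L^c$, asserts that $\pi(N)_{L^c}=c\cdot \pi'(M)_L=c\,\sign(L)\det(M_L)$ for every $L\in J(n,v)$, where $c\in F^*$ depends on $M,N$ but not on $L$. My plan is a two-step reduction: first normalize the pair $(M,N)$ using the functoriality of $\Lambda$, then verify the identity for the normalized pair by a direct minor calculation.

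First, I reduce to a single canonical pair. The hypothesis $MN^T=0$ places $\text{row}(N)\subseteq\ker M$, and a dimension count ($\dim\text{row}(N)=n-v=\dim\ker M$) forces equality $\text{row}(N)=\ker M$. Hence every admissible replacement of $M$ has the form $QM$ with $Q\in GL_v$, and every admissible replacement of $N$ has the form $UN$ with $U\in GL_{n-v}$. By the functoriality of $\Lambda$ (Lemma~\ref{L:func}, the Cauchy--Binet identity applied row-by-row), $\pi(QM)=\det(Q)\,\pi(M)$ and $\pi(UN)=\det(U)\,\pi(N)$, and the analogous statement holds for $\pi'$. Consequently the pointwise ratio $\pi(N)_{L^c}/\pi'(M)_L$ only rescales by the uniform constant $\det(U)\det(Q)^{-1}$ when bases are changed, so it suffices to verify the lemma for one convenient choice of $(M,N)$ spanning the prescribed subspaces.

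Second, after a permutation of the $n$ coordinates (harmless: it multiplies every coordinate of $\pi$ and $\pi'$ by the same sign and hence preserves proportionality) and a left scaling, I take $M=(I_v\mid B)$ for some $v\times(n-v)$ matrix $B$ and then $N=(-B^T\mid I_{n-v})$, which satisfies $MN^T=-B+B=0$ and has the required row space by dimensions. Write $L=L_1\sqcup L_2$ with $L_1\subseteq\{0,\dots,v-1\}$ and $L_2\subseteq\{v,\dots,n-1\}$, so $L^c=L_1^c\sqcup L_2^c$. Laplace expansion of $\det(M_L)$ along the standard-basis columns indexed by $L_1$ collapses it to $\pm\det(B[L_1^c,L_2'])$, where $L_2'$ is the index shift of $L_2$ into $\{0,\dots,n-v-1\}$; an analogous expansion of $\det(N_{L^c})$ along the standard-basis columns indexed by $L_2^c$ yields $\pm\det(B[L_1^c,L_2'])$, with the same sub-determinant of $B$. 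Thus $\det(N_{L^c})=\pm\det(M_L)$, and a parity count of the permutations that bring the identity blocks of each matrix into leading position identifies this sign as exactly $\sign(L)$. Combined with Step~1, this gives $\pi(N)_{L^c}=c\,\pi'(M)_L$ uniformly in $L$ with $c=\det(U)\det(Q)^{-1}$ for the reduction used.

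\textbf{Expected obstacle.} The delicate part is the final sign tally---verifying that the two Laplace expansions contribute residual signs whose ratio is uniformly $\sign(L)$. The underlying structural reason is the Hodge/Plücker duality $\wedge^v F^n\cong\wedge^{n-v}F^n$ intertwining the Plücker coordinates of $V=\text{row}(M)$ with those of $V^\perp=\text{row}(N)$, but a self-contained proof requires explicit bookkeeping of the column permutations rearranging $M_L$ and $N_{L^c}$ into block-triangular form. Once checked on the canonical pair (which can be tested on a small case such as $n=4$, $v=2$ to guard against sign errors), the invariance from Step~1 immediately transports the identity to arbitrary admissible $(M,N)$.
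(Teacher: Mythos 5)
Your plan parallels the paper's proof at a structural level: both arguments normalize the pair $(M,N)$ to the canonical block form $M=(I_v\mid B)$, $N=(-B^T\mid I_{n-v})$ and then compare complementary minors. But there is a genuine gap. You explicitly defer the crux of the argument---the verification that the signs produced by the two Laplace expansions, together with the sign from the coordinate permutation that put $M$ in block form, combine to give exactly $\sign(L)$, uniformly in $L$. This is where all the content lives, and nothing you wrote reduces it to something checked. The assertion that a simultaneous column permutation of $M$ and $N$ is ``harmless'' because it ``multiplies every coordinate of $\pi$ and $\pi'$ by the same sign'' is not accurate as stated: the permutation moves the coordinate $L$ to $\sigma(L)$ and multiplies it by an $L$-dependent sign $\epsilon(L,\sigma)$. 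Showing that the combination $\epsilon(L^c,\sigma)\,\sign(\sigma(L))\big/\bigl(\epsilon(L,\sigma)\,\sign(L)\bigr)$ is $L$-independent is itself a nontrivial determinant identity of exactly the kind you are deferring, so your ``Step~1'' invariance is circular without it.

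The paper avoids all of this bookkeeping through a different reduction: it shows that the ratio $\sign(L)\det(M_L)/\det(N_{L^c})$ is the same for any two $L_1, L_2$ at Johnson-graph distance one, and notes that for an adjacent pair one may use row operations and a column permutation to normalize to $L_1=\{0,\dots,v-1\}$, $L_2=\{1,\dots,v\}$. The final check is then a one-entry computation: $\det N_2=-X_{1,1}$, $\det M_2=(-1)^{v-1}X_{1,1}$, $\sign(L_2)=(-1)^v$. This local reduction converts the global sign bookkeeping into a single trivial identity, which is the paper's key idea and the step your outline is missing. To make your version complete you would either need to carry out the full Laplace-expansion sign count (the residual signs of the two block expansions against the definition $\sign(L)=\det(I_L\mid I_{L^c})$) and separately verify the column-permutation invariance, or adopt the adjacent-pair reduction.
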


\begin{proof}
A $v \times v$ minor in $M$ is singular if and only if the complimentary $(n-v) \times (n-v)$ minor in $N$ is singular. Thus $\pi(N)$ and $\pi'(M)$ have zeros in the same positions.
For a pair of invertible minors $M_1$ and $M_2$ in $M$, with columns in $L_1$ and $L_2$, respectively, denote the complementary minors in $N$ by $N_1$ and $N_2$.
We need to show that $\sign(L_1) \det ( M_1 ) \det ( N_2 ) = \sign(L_2) \det ( M_2 ) \det ( N_1 ).$  It suffices to prove the special case where $L_1$ and $L_2$ are adjacent. 
The claim is not affected by row operations on $M$ or on $N$, or by a permutation of the columns in $M$ and $N$. Without loss of generality, we may
therefore assume that $L_1 = \{ 0,1,\ldots,v-1 \},$ $L_2 = \{ 1,2,\ldots,v \}$, and that
\[
M = \left( \begin{array}{c|c}
I_{v} &X \end{array}
\right), ~~~
N =  \left( \begin{array}{c|c}
-X^T &I_{n-v}  \end{array}
\right).
\]
So that $M_1$ and $N_1$ are identity matrices. The claim reduces to $\det ( N_2 ) = \sign(L_2) \det ( M_2 )$ and this is true for $\det( N_2) = - X_{1,1},$ $\sign(L_2) = (-1)^v$ and 
$\det(M_2) = (-1)^{v-1} X_{1,1}.$
\end{proof}

\begin{proposition} \label{P:CC'}
The signed version $C'$ of the code $C$ is generated by vectors $\pi(M)$, for all $(n-v) \times n$ matrices $M$ with at least 
$\dim D_0 -v + t$ rows in $D_0$. 
\end{proposition}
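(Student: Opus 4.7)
The plan is to combine Lemma~\ref{L:CC'} with a symmetric dimension count that matches up the two proposed generating sets under the duality $V_M \mapsto V_M^\perp$. Since $\pi(M) = 0$ whenever $M$ is rank-deficient, I may restrict attention to full-rank matrices throughout, so that a matrix $M$ having ``at least $t$ rows in $C_0$'' translates, without loss of generality, into $\dim(V_M \cap C_0) \geq t$, where $V_M$ is the row space of $M$.

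Given such a full-rank $v \times n$ matrix $M$, the orthogonal complement $V_M^\perp$ has dimension $n-v$, and using $\dim D_0 = n - \dim C_0$ I compute
\[
\dim(V_M^\perp \cap D_0) = \dim\bigl((V_M + C_0)^\perp\bigr) = n - \dim(V_M + C_0) \geq n - v - \dim C_0 + t = \dim D_0 - v + t.
\]
I can then choose a basis of $V_M^\perp$ whose first $\dim D_0 - v + t$ vectors lie in $D_0$ and let $N$ be the $(n-v) \times n$ matrix with these basis vectors as its rows. The row spaces of $M$ and $N$ are orthogonal and both of full rank, so Lemma~\ref{L:CC'} gives $\pi(N) = \lambda\, \pi'(M)$ for some nonzero scalar $\lambda$. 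By the definition of the signed version, $\pi'(M) \in C'$, so the code spanned by the proposed generators is contained in $C'$.

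For the reverse inclusion I apply the same argument with the roles of $(M, C_0, t)$ and $(N, D_0, \dim D_0 - v + t)$ interchanged. The parallel dimension count reads
\[
\dim(V_N^\perp \cap C_0) \geq v + \dim C_0 - n + (\dim D_0 - v + t) = t,
\]
using once more $\dim C_0 + \dim D_0 = n$, so a full-rank $v \times n$ matrix $M$ with row space $V_N^\perp$ and at least $t$ rows in $C_0$ exists, and Lemma~\ref{L:CC'} again identifies $\pi(N)$ with a scalar multiple of $\pi'(M) \in C'$. Thus every signed generator of $C$ lies in the span of the proposed generators, and the two codes coincide. The only real obstacle is the symmetric bookkeeping that verifies the correspondence $t \leftrightarrow \dim D_0 - v + t$; once that is checked, the proposition reduces to a direct application of Lemma~\ref{L:CC'} and the definition of $C'$.
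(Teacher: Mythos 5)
Your proof is mathematically sound and uses Lemma~\ref{L:CC'} as the paper does, but the route you take differs from the paper's in a way worth noting. The paper fixes a concrete kernel matrix pair $(g,h)$ in systematic form, takes generators $\pi(M)$ of $C$ with rows drawn from $g$ ($\ge t$ from $\{g_1,\ldots,g_k\}$), and observes that the \emph{complementary} rows of $h$ automatically form an $N$ with $\ge \dim D_0 - v + t$ rows in $\{h_{k+1},\ldots,h_n\}$; the index complementation gives a bijection between the basic generators of $C$ (hence of $C'$) and the basic proposed generators, and Lemma~\ref{L:CC'} identifies them term by term. You instead work abstractly at the level of row spaces: you show that the orthogonal complement map $V_M\mapsto V_M^\perp$ carries $\{\,V : \dim(V\cap C_0)\ge t\,\}$ bijectively onto $\{\,V : \dim(V\cap D_0)\ge\dim D_0 - v + t\,\}$ via a short rank--nullity computation, then invoke the lemma. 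This is cleaner in that it makes the symmetry $t\leftrightarrow\dim D_0-v+t$ transparent and does not depend on a particular choice of basis; the paper's version is more economical in that a single bijection of index sets does the work of both containments.

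One bookkeeping error you should fix: the conclusions you draw at the end of each paragraph are swapped. In your first paragraph you start from an arbitrary $M$ giving a generator $\pi'(M)$ of $C'$ and produce a proposed generator $\pi(N)$ with $\pi(N)=\lambda\pi'(M)$; this shows $\pi'(M)\in\langle\text{proposed}\rangle$, i.e.\ $C'\subseteq\langle\text{proposed}\rangle$, \emph{not} (as you write) that $\langle\text{proposed}\rangle\subseteq C'$. Symmetrically, your second paragraph starts from an arbitrary proposed $N$ and produces $M$ with $\pi(N)=\lambda\pi'(M)\in C'$, which shows $\langle\text{proposed}\rangle\subseteq C'$, not that every signed generator of $C$ lies in $\langle\text{proposed}\rangle$. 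Both containments are in fact established by the two paragraphs together, so the result stands; the attributions just need to be interchanged.
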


\begin{proof} We give a proof that uses Lemma \ref{L:CC'}.
For dual codes $C_0$ and $D_0$, with information sets in the leading and trailing positions, respectively,
define a pair of $n \times n$ kernel matrices
\[
g =  \left( \begin{array}{c|c}
I_{k} &B \\ \midrule
0 &- I_{n-k}
\end{array}
\right), ~~~
h =  \left( \begin{array}{c|c}
-I_k &0 \\ \midrule
-B^T &I_{n-k}
\end{array}
\right).
\]
Such that rows $\{ g_1, \ldots, g_k \}$ generate $C_0$ and rows $\{ h_{k+1}, \ldots, h_n \}$ generate $D_0$.
Generators $\pi(M)$ for $C$ are chosen with matrices $M$ that contain $v$ rows in $g$ with at least $t$ rows in $\{ g_1, \ldots, g_k \}$.
The rows in the complementary positions in $h$ form a matrix $N$ that has at least $\dim D_0-v+t$ rows in $\{ h_{k+1}, \ldots, h_n \}$.
The matrices $M$ and $N$ have orthogonal row spaces. By Lemma \ref{L:CC'}, $\pi(N)$ is proportional to $\pi'(M)$. It follows that the
vectors $\pi(N)$ generate $C'$.
\end{proof}

A weaker claim for the proposition is that the construction yields a Johnson graph code with the same parameters as $C$ and the same information sets. As Johnson graph code, $C$ is of type $JGC(n,v,k,r)$ for $r = \min ( v, \dim C_0) - t$. The constructed code is of type $JGC(n,n-v,n-k,s)$ for $s = \min( n-v, \dim D_0) - (\dim D_0 -v +t) =$ $\min (\dim C_0,v) - t = r$.  Equality of the information sets, up to the graph isomorphism between $J(n,v)$ and $J(n,n-v)$, follows from 
\[
L \in S_r(A) \subset J(n,v) ~\Leftrightarrow~ r = \min ( \lvert L \cap A^c \rvert, \lvert A \cap L^c \rvert )  ~\Leftrightarrow~ L^c \in S_r(A^c) \subset J(n,n-v).
\]

Theorem \ref{T:hsparse} gives a refinement of Lemma \ref{L:jgc3}. As in (\ref{Rd1d2}), let $d_1 = \min( v, n-k )$ and $d_2 = \min( k, n-v).$ And let $R = \min ( d_1, d_2 ).$
For an $n \times n$ matrix 
\[
h =  \left( \begin{array}{c|c}
I_{n-k} &B \\ \midrule
0 &- I_{k}
\end{array}
\right),
\]
the $N \times N$ matrix $\Lambda(h)$ with rows of the form $\pi(M)$ is such that the leading $|B_r(I_0)|$ rows, for $I_0 = \{ 0,1,\ldots,n-k-1 \}$, span $JGC(n,v,n-k,r)$, for $0 \leq r \leq R.$
The rows of $\Lambda(h)$ partition into shells $J(n,v) = \cup S_i(I_0)$, $0 \leq i \leq R.$ For $A_0 = \{ 0,1,\ldots,n-k-1 \}$, the columns of $\Lambda(h)$ partition into shells $J(n,v) = \cup S_j(A_0)$, $0 \leq j \leq R.$ The partition of rows and columns gives the matrix $\Lambda(h)$ a block structure.

\begin{theorem} \label{T:hsparse}
The $i-$th block of rows in $\Lambda(h)$ contains
\begin{equation} \label{eq:i}
| S_i(I_0) | = {n-k \choose d_1-i}{k \choose d_2-i}
\end{equation}
rows. The $j-$th segment in a row of the $i-$th block of rows in $\Lambda(h)$ contains at most 
\begin{equation} \label{eq:ij}
{d_1-i \choose j-i}{d_2-i \choose j-i}
\end{equation}
nonzero entries. The total number of nonzero entries in a row of the $i-$th block is
\begin{equation} \label{eq:rowi}
 {d_1 + d_2 - 2i \choose R-i}.
\end{equation}
\end{theorem}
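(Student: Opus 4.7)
The plan is to establish the three formulas in sequence, relying on the block-triangular structure that the submatrix $h_{\beta, \alpha}$ inherits from $h$. Split $\beta = \beta_0 \sqcup \beta_1$ with $\beta_0 = \beta \cap I_0$ and $\beta_1 = \beta \setminus I_0$, and similarly $\alpha = \alpha_0 \sqcup \alpha_1$ using $A_0 = I_0$. For (\ref{eq:i}), I would parametrize $\beta \in J(n,v)$ by $(|\beta_0|, |\beta_1|) = (v-b, b)$, so that (\ref{eq:Sr}) gives shell index $i = \min(b, (n-k)-v+b)$, singling out one value of $b$ per shell. Splitting into the cases $v \leq n-k$ (where $d_1 = v$, $d_2 = k$, and $b = i$) and $v > n-k$ (where $d_1 = n-k$, $d_2 = n-v$, and $b = i + v - (n-k)$), the count $|S_i(I_0)| = \binom{n-k}{v-b}\binom{k}{b}$ simplifies in both cases to $\binom{n-k}{d_1-i}\binom{k}{d_2-i}$, using $\binom{k}{m} = \binom{k}{k-m}$ for the second case.

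For (\ref{eq:ij}), the main work, I would first observe that after the above ordering of rows and columns, the submatrix $h_{\beta, \alpha}$ is block upper-triangular with diagonal blocks $E_0, E_1$ whose $(i,j)$-entries are $+1, -1$ when $i = j$ and $0$ otherwise, and off-diagonal block a submatrix of $B$. If $\beta_1 \not\subseteq \alpha_1$, then some row in the bottom block is identically zero, forcing $\det(h_{\beta,\alpha}) = 0$; symmetrically, if $\alpha_0 \not\subseteq \beta_0$, then some column in the left block vanishes. When both inclusions hold, elimination using the identity entries reduces the determinant to $\pm \det B_{\beta_0 \setminus \alpha_0,\, \alpha_1 \setminus \beta_1}$, which may vanish for non-generic $B$ and accounts for the ``at most'' in the bound. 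With $p := |\alpha_1 \setminus \beta_1| = |\beta_0 \setminus \alpha_0|$, the number of $\alpha \in J(n,v)$ satisfying both inclusions with fixed $p$ is $\binom{|\beta_0|}{p}\binom{k-|\beta_1|}{p}$. The key bookkeeping step is to apply the shell parametrization also to $\alpha$, verifying in each of the two cases that $p = j-i$, $|\beta_0| = d_1 - i$, and $k - |\beta_1| = d_2 - i$, which yields the claimed $\binom{d_1-i}{j-i}\binom{d_2-i}{j-i}$.

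Finally, for (\ref{eq:rowi}), summing (\ref{eq:ij}) over $j$ from $i$ to $R$ and substituting $\ell = j - i$, Vandermonde's identity gives
\[
\sum_{\ell \geq 0} \binom{d_1-i}{\ell}\binom{d_2-i}{(d_2-i)-\ell} = \binom{d_1+d_2-2i}{d_2-i},
\]
which equals $\binom{d_1+d_2-2i}{R-i}$ via the symmetry $\binom{m}{s} = \binom{m}{m-s}$ together with the fact that $R - i$ equals either $d_1 - i$ or $d_2 - i$ since $R = \min(d_1,d_2)$. The hardest part of the argument is the middle paragraph: isolating the two inclusion conditions on $\alpha$ as exactly the necessary conditions for $h_{\beta,\alpha}$ to be potentially nonsingular, and then giving a uniform description of $|\beta_0|$ and $k - |\beta_1|$ in terms of $d_1, d_2, i$ that collapses the two case splits on $v$ versus $n-k$. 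Once those identifications are in place, the binomial manipulations for (\ref{eq:i}) and (\ref{eq:rowi}) are routine.
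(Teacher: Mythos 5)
Your proof is correct and takes essentially the same approach as the paper's: the paper works with the column block structure $\bigl(\begin{smallmatrix} O & I & X & Y \\ O & O & I & O \end{smallmatrix}\bigr)$ of the $v \times n$ row-submatrix $M$ of $h$ (carried over from Lemma \ref{L:jgc2}), which is exactly your block-triangular decomposition of the $v \times v$ minor $h_{\beta,\alpha}$, and both isolate the same two inclusion conditions $\alpha_0 \subseteq \beta_0$, $\beta_1 \subseteq \alpha_1$ before counting per shell and applying Vandermonde. Your bookkeeping is in fact cleaner at one spot: the paper's list of intersection sizes $0,\, d_1-j,\, v-d_1+i,\, i-j$ contains a sign typo and should read $j-i$ in the last position, as your parameter $p = j - i$ has it.
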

 
\begin{proof} A row $\pi(M)$ in $\Lambda(h)$ belongs to the $i-$th block if
\[
M = \left( \begin{array}{c|c|c|c}
O &I &X &Y \\ \midrule
O &O &I &O 
\end{array} \right)
\]
has column block sizes, from left to right, 
\[
\dim D_0 -d_1+i, ~d_1-i, ~v-d_1+i, ~\dim C_0 -v+d_1-i.
\]
The size of the last block can be written $k - v + \min (v, n-k) -i = \min(k, n-v ) -i = d_2 -i.$
This proves the number of choices for $M$ in (\ref{eq:i}). A layer $L \in S_j(A_0)$ with $\det (M_L) \neq 0$
has number of coordinates in each block, from left to right, 
\[
0, ~d_1-j, ~v-d_1+i, ~i-j.
\]
This proves the number of choices for $L$ in (\ref{eq:ij}). Summation of (\ref{eq:ij}) over $i \leq j \leq R$ yields (\ref{eq:rowi}).
\end{proof}

\section{Construction using Reed-Solomon codes} \label{S:rs}

We consider the construction of Johnson graph codes $JGC(n,v,k,r)$ in Definition \ref{def:code} for the special case that the MDS code $C_0$ is a Reed-Solomon code. 
In general, a code of length $n$ and dimension $k$ is MDS if a codeword is uniquely determined by any $k$ of its $n$ coordinates. 
For distinct field elements $\alpha_0, \alpha_1, \ldots, \alpha_{n-1} \in F$, and for $f \in F[x],$ let
\[
ev(f) = (f(\alpha_0), f(\alpha_1), \ldots, f(\alpha_{n-1})).
\]
For $0 \leq k \leq n$, 
a Reed-Solomon code of dimension $k$ is defined as the space of vectors $\{ ev(f) : \deg f < k \}$. A codeword $ev(f)$ is uniquely determined by any $k$ of its coordinates and thus the code is MDS. 
The vectors $ev(x^i),$ for $i = 0,1, \ldots, n-1,$ form a basis for $F^n.$ They form a $n \times n$ Vandermonde matrix~$g$ with rows $g_i = ev(x^i)$ such that the leading $k$ rows of $g$ span a Reed-Solomon code of dimension~$k$, for any $0 \leq k \leq n$. 

In the previous section we used the matrix $g$ in systematic form
to describe relations among a Johnson graph code $C$, its dual code $D$, and the equivalent codes $C'$ and $D'$. Theorem \ref{T:hsparse} uses the dual matrix $h$ in systematic form to obtain sparse parity check vectors for efficient erasure correction. In this section we consider different properties and interpretations that occur when $g$ is a matrix of Vandermonde type, including connections with principal subresultants, monomial symmetric functions, Schubert codes, Chinese remainder based codes, and product matrix codes. When we row reduce the matrix $g$ it will in general be to block form rather than systematic form.

For the special case of Reed-Solomon codes, the construction in Definition \ref{def:code} assigns to a subset $I \in J(n,v)$ a coordinatization vector 
$\pi(M)$, for the matrix $M$ with rows $\{ ev(x^i) \}_{i \in I}$. The vector $\pi(M)$ has coordinates,
for $L \in J(n,v)$,
\[
\pi(M)_L 
~=~ \det (\,\alpha_j^i : i \in I, j \in L \,) =: \det(x^I; \alpha_L).
\] 

By Proposition\ref{prop:jgc}, 
the vectors
\[
\{ \; ( \text{det}(x^I; \alpha_L) )  :  L \in J(n,v) ) \; : \; I \cap \{ 0 , 1, \ldots, k-1 \} \geq t \; \}
\]
form a basis for a Johnson graph code $C$ of type $JGC(n,v,k,r)$, where $r = \min(v,k)-t.$ 
By Proposition \ref{P:CD}, 
the vectors 
\[
\{ \; ( \text{det}(x^I; \alpha_L) :  L \in J(n,v) ) \; : \; I \cap \{ 0 , 1, \ldots, n-k-1 \} \geq v+1-t \; \}
\]
form a basis for a dual Johnson graph code $D$ of type $JGC(n,v,n-k,R-r-1)$.

\begin{example} \label{ex:5221b}
The code $C = JGC(5,2,2,1)$ and its dual $D$, both defined with $J(5,2)$, are generated by vectors 
\[
\pi(M) = (\, \det \begin{pmatrix} 
\alpha_{j_0}^{i_0} &\alpha_{j_1}^{i_0}  \\
\alpha_{j_0}^{i_1} &\alpha_{j_1}^{i_1} 
\end{pmatrix} \; : \; 0 \leq j_0 < j_1 \leq 4 \, ). 
\]
\begin{align*}
C = JGC(5,2,2,1) ~&:~ 0 \leq i_0 < i_1 \leq 4,~ i_0 \leq 1. \\
D = JGC(5,2,3,0) ~&:~ 0 \leq i_0 < i_1 \leq 4,~ i_1 \leq 2.
\end{align*}
See also Example \ref{ex:5221}. The equivalent codes $C'$ and $D'$, both defined with $J(5,3)$, are generated by vectors 
\[
\pi(M) = (\, \det \begin{pmatrix} 
\alpha_{j_0}^{i_0} &\alpha_{j_1}^{i_0} & \alpha_{j_2}^{i_0} \\
\alpha_{j_0}^{i_1} &\alpha_{j_1}^{i_1} & \alpha_{j_2}^{i_1} \\
\alpha_{j_0}^{i_2} &\alpha_{j_1}^{i_2} & \alpha_{j_2}^{i_2} 
\end{pmatrix} \; : \; 0 \leq j_0 < j_1 < j_2 \leq 4 \, ). 
\]
\begin{align*}
C' = JGC(5,3,3,1) ~&:~ 0 \leq i_0 < i_1 < i_2 \leq 4,~ i_1 \leq 2. \\
D' = JGC(5,3,2,0) ~&:~ 0 \leq i_0 < i_1 < i_2 \leq 4,~ i_1 \leq 1.
\end{align*}
For $C'$ we use Proposition \ref{P:CC'} and for $D'$ we use Proposition \ref{P:CD'}.
The codes $C$ and $D$ use the same matrix $g$ when $\{ \alpha_0, \alpha_1, \ldots, \alpha_{n-1} \} = F$. In general, 
the dual code $D_0$ of a Reed-Solomon code $C_0$ is equivalent to a Reed-Solomon code.
For the general case, when the $n$ elements form a subset $\{ \alpha_0, \alpha_1, \ldots, \alpha_{n-1} \} \subset F$, let $p(z) = (z-\alpha_0)(z-\alpha_1)\cdots(z-\alpha_{n-1}).$ If $C$ is defined with $g$ then $D$ is defined with $g \Delta$, 
for $\Delta = - \mathrm{diag} (p'(\alpha_0), p'(\alpha_1), \ldots, p'(\alpha_{n-1}))^{-1}$.
\end{example}

Let $E = \{ 0,1,\ldots,n-1 \}.$ The $n \times n$ Vandermonde matrix $g$ has rows $\{ ev(x^i) \}_{i \in E}$.
Let  $f_i(x) = \prod_{j < i} (x-\alpha_j),$ for $i \in E$. The rows $\{ ev(f_i) \}_{i \in E}$ describe the matrix $g$ in row reduced 
upper triangular form. A matrix $g$ in row reduced block form depends on a choice of information set $A \subset E$ for $C_0$. 
After reordering columns we may assume $A = \{ 0,1,\ldots,k-1 \}$.
Let
\[
 h_i(x) = \begin{cases} x^i,   &0 \leq i \leq k-1. \\ f_k(x) x^{i-k}, &k \leq i \leq n-1. \end{cases}
\]
The rows $\{ ev(h_i) \}_{i \in E}$ describe the matrix $g$ in row reduced block form. 

\begin{remark}
The two reduced forms for $g$, the upper triangular form and the block form, are obtained with lower triangular row operations 
and the three matrices share the same row spaces for a set of $k$ leading rows, for any $0 \leq k \leq n.$ 
\end{remark}

For $g = \{ ev(f_i) \}_{i \in E}$ in upper triangular form, the matrix  $\Lambda(g)$ has entries
\[
\Lambda(g)_{I,L} = 
\det ( f_i(\alpha_j) : i \in I, j  \in L ) =: \det ( f_I ; \alpha_L ).
\]
For $g = \{ ev(h_i) \}_{i \in E}$ in block form, the entries are
\[
\Lambda(g)_{I,L} = 
\det ( h_i(\alpha_j) : i \in I, j  \in L ) =: \det ( h_I ; \alpha_L ).
\]

The matrix $g$ in block form depends on a choice of information set $A \in J(n,k)$. The matrix entry $\det ( h_I ; \alpha_L )$ 
further depends on a row index $I \in J(n,v)$ and a column index $L \in J(n,v)$. In Appendix \ref{S:AppB}, in Proposition \ref{P:sh} and the remark that follows it, 
we show that as a function of $A$ and $L$, for a given $I$, the entry $\det ( h_I ; \alpha_L )$ can be interpreted, in general up to a triangular transformation,
 as a generalized principal subresultant of the two polynomials $p(x) = \prod_{j \in L} (x-\alpha_j)$ and $q(x) = \prod_{j < k} (x-\alpha_j).$ 
 In particular it can be expressed as a linear combination of determinants of Sylvester type matrices for $p(x)$ and $q(x).$ 

\begin{lemma} \label{L:Ispec}
Let
\[
I = \{ 0,1,\ldots,t-1 \} \cup \{ k,k+1,\ldots,k+v-t-1 \}.
\]
That is, $I$ is minimal in $J(n,v)$ such that $I \cap \{ 0,1,\ldots,k-1 \} = t$. For all $L \in J(n,v),$
$\det ( f_I ; \alpha_L ) = \det ( h_I ; \alpha_L )$, and the upper triangular form for $g$ and the block form for $g$ 
share the same $I$-th row in $\Lambda(g).$ 
\end{lemma}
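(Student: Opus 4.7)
The plan is to exhibit a unitriangular $v \times v$ change-of-basis matrix $T$, indexed by $I$, such that for each column $j$ the vector $(f_i(\alpha_j))_{i \in I}$ equals $T$ applied to $(h_i(\alpha_j))_{i \in I}$. Then by multilinearity of the $v\times v$ determinant in its rows,
\[
\det ( f_I ; \alpha_L ) = \det(T) \det ( h_I ; \alpha_L ) = \det ( h_I ; \alpha_L )
\]
for every $L \in J(n,v)$, which is the first assertion. The second assertion (equality of the $I$-th rows of $\Lambda(g)$ in the two reduced forms) is immediate from this.

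The key step is to expand each $f_i$ with $i \in I$ in the $\{h_j\}_{j \in I}$. Two cases correspond to the two blocks of $I$. For $i \le t-1$, since $i < t \le k$, the polynomial $f_i(x) = \prod_{j<i}(x-\alpha_j)$ is monic of degree $i$, and on the block $j < k$ we have $h_j(x) = x^j$. Writing $f_i$ in the monomial basis gives $f_i = h_i + \sum_{j<i} a_{ij} h_j$, with every index $j \le i < t$ lying in $I$. For $i \in \{k, k+1, \ldots, k+v-t-1\}$, factor
\[
f_i(x) = f_k(x) Q_i(x), \qquad Q_i(x) = \prod_{k \le j < i}(x-\alpha_j),
\]
so $Q_i$ is monic of degree $i-k$. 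Expanding $Q_i(x) = x^{i-k} + \sum_{m<i-k} b_{im} x^m$ and using $h_{k+m}(x) = f_k(x) x^m$ yields $f_i = h_i + \sum_{m<i-k} b_{im} h_{k+m}$. Every index $k+m$ that appears satisfies $k \le k+m \le i-1 \le k+v-t-2$, and therefore lies in the second block of $I$.

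Ordering $I$ in the natural increasing order, the resulting transition matrix $T$ is upper unitriangular: each $f_i$ is $h_i$ plus a combination of $h_j$ with $j \in I$ and $j < i$. Hence $\det(T) = 1$, and the conclusion follows. The only point that genuinely uses the hypothesis on $I$ is the upper bound $i \le k+v-t-1$ in the second block, which is exactly what keeps the expansion of $f_i$ from reaching outside $I$; this is the bit of bookkeeping one has to watch, but it is not a real obstacle once the factorization $f_i = f_k Q_i$ is in hand.
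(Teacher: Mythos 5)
Your proof is correct and follows essentially the same route as the paper: split $I$ into its two blocks, expand each $f_i$ in the $h_j$'s within the same block, and observe the transition is (uni)triangular. In fact your version is slightly more careful than the paper's, which only records the equality of spans $\langle f_i : i \in I\rangle = \langle h_i : i \in I\rangle$ — that alone gives equality of the rows of $\Lambda(g)$ only up to a nonzero scalar; your explicit verification that the transition matrix is unitriangular (so has determinant $1$) is exactly the extra fact needed to get equality on the nose.
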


\begin{proof}
\begin{align*}
\langle \, f_i : i \in I  \rangle ~=~ 
&\langle \, f_i : 0 \leq i \leq t-1 \, \rangle +  \langle \, f_i : k \leq i \leq k+v-t-1 \, \rangle  \\
 ~=~ &\langle \, x^i : 0 \leq i \leq t-1  \, \rangle +  \langle \, f_k x^{i-k} :  k \leq i \leq k+v-t-1 \, \rangle =  \langle \, h_i : i \in I  \rangle.  
\end{align*}
\end{proof}

For the choice of $I$ in the lemma, $\det ( h_I ; \alpha_L ) = 0$ if and only if the $t$-th principal subresultant is zero for polynomials $q(x) = f_k(x)$ and $p(x) = \prod_{j \in L} (x-\alpha_j).$ 
We include an example and refer for further details to the appendix.

\begin{example} \label{ex:034a}
Let $I=\{ 0,3,4 \}$, i.e., the case $t=1, k=3, v=3$ in the lemma.
\[
\det (h_I ; x_0,x_1,x_2 ) = \det \left( \begin{array}{lll} 
1 &1 &1 \\
q(x_0) &q(x_1) &q(x_2) \\
q(x_0) x_0 &q(x_1) x_1 &q(x_2) x_2 
\end{array} \right).
\]
Where $q(x) = f_3(x) = (x-\alpha_0)(x-\alpha_1)(x-\alpha_2)$. Let $p(x) = (x-x_0)(x-x_1)(x-x_2)$ and let $d(x) = \gcd (p(x),q(x)).$ 
For $\deg d(x) > 1$, e.g., for $q(x_0) = q(x_1) = 0$, the matrix is singular. For $\deg d(x) = 1$, e.g., for $q(x_0)=0, q(x_1), q(x_2) \neq 0,$ the matrix is regular. For $q(x) = \sum_i q_i x^{k-i}$, $p(x) = \sum_i p_i x^{v-i}$,
\[
\det ( h_I ; x_0,x_1,x_2) = 0 ~\Leftrightarrow~   \det \left( \begin{array}{cccc} p_0 &p_1 &p_2 &p_3 \\ 0 &p_0 &p_1 &p_2 \\ \midrule q_0 &q_1 &q_2 &q_3 \\ 0 &q_0 &q_1 &q_2 \end{array} \right) = 0.
\]
The right side is the first principal subresultant for $p(x)$ and $q(x).$
\end{example}

Let $\Lambda(g)$ be the $\binom{n}{v} \times \binom{n}{v}$ matrix $\{ \det(x^{L_1} ; \alpha_{L_2}) \}_{L_1, L_2 \in J(n,v) }$. 
As possible orderings for the rows and columns in $\Lambda(g)$ we consider the lexicographic and $k$-lexicographic orderings for the vertices of $J(n,v)$.
By Lemma \ref{L:upp} and Proposition \ref{P:LU}, the principal minors for $\Lambda(g)$ are invertible for either of the two orderings.
We prove, for the case of a Vandermonde matrix $g$, that for each of the two orderings the principal minors of $\Lambda(g)$ factor completely into a product of binomial differences $\alpha_i-\alpha_j.$

\begin{lemma} \label{L:ord1}
Let $L = (\ell_0, \ell_1, \dots, \ell_{v-1}) \in J(n,v).$ The number of vertices $L' \in J(n,v)$ that are adjacent to $L$ and that precede $L$ in the
lexicographic order is $| L | :=  \sum_{i =0}^{v-1} (\ell_i - i)$.
\end{lemma}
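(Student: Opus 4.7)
The plan is to describe the neighbors of $L$ explicitly as single-element swaps and then determine exactly which swaps decrease the lexicographic order. Every vertex $L'$ adjacent to $L$ in $J(n,v)$ is obtained by choosing an index $i \in \{0,1,\ldots,v-1\}$ and a new element $m \in \{0,1,\ldots,n-1\} \setminus L$, and setting $L' = (L \setminus \{\ell_i\}) \cup \{m\}$. Distinct pairs $(i,m)$ give distinct vertices $L'$, so the neighbors are parametrized by such pairs and I just need to count how many of them yield $L' <_{\text{lex}} L$.

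Next I would show the key dichotomy: $L' <_{\text{lex}} L$ if and only if $m < \ell_i$. For the ``if'' direction, if $m < \ell_i$, then writing the sorted form of $L'$ one finds that its entries in positions $0, \ldots, j-1$ agree with those of $L$ (where $j \leq i$ is the insertion position of $m$), and in position $j$ the entry $m$ is strictly smaller than $\ell_j$. For the ``only if'' direction, if $m > \ell_i$, then the sorted $L'$ agrees with $L$ in positions $0, \ldots, i-1$ and has an entry strictly greater than $\ell_i$ in position $i$, so $L' >_{\text{lex}} L$. (The case $m = \ell_i$ is excluded because $m \notin L$.)

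Finally I would count: for each fixed $i$, the number of valid $m$ is the number of elements of $\{0,1,\ldots,n-1\} \setminus L$ strictly less than $\ell_i$. Among the $\ell_i$ nonnegative integers below $\ell_i$, exactly $i$ of them (namely $\ell_0, \ldots, \ell_{i-1}$) lie in $L$, leaving $\ell_i - i$ available values of $m$. Summing over $i$ gives
\[
\sum_{i=0}^{v-1} (\ell_i - i) = |L|,
\]
which is the stated count.

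The argument is largely mechanical; the only place that needs genuine care is the position-by-position comparison establishing the $m < \ell_i$ criterion, since one has to correctly identify where $m$ lands after the swap and confirm that earlier positions remain unchanged. Once that dichotomy is in hand, the counting is immediate.
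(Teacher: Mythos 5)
Your proof is correct and follows essentially the same approach as the paper: parametrize neighbors by single-element swaps, observe that $L' <_{\text{lex}} L$ exactly when the removed element $\ell_i$ is replaced by a smaller $m \notin L$, and count $\ell_i - i$ such choices for each $i$. The paper's one-line proof simply asserts the count, whereas you make the position-by-position lexicographic comparison explicit; that is a useful elaboration, not a different route.
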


\begin{proof}
To replace $\ell_i \in L$ with $\ell_j \not \in L$ such that $\ell_j < \ell_i$ there are $\ell_i - i$ choices.
The total number of choices for $L'$ is therefore $\sum_{i =0}^{v-1} (\ell_i - i).$
\end{proof}

The $k$-lexicographic order was defined before Lemma \ref{L:upp}. Let $E_0 = \{ 0,1,\ldots,k-1 \}$. Recall that $L' \leq_k L$ if $\lvert L' \cap E_0 \rvert > \lvert L \cap E_0 \rvert,$ or, in case of equality, if $L' \leq L$ lexicographically. 

\begin{lemma}  \label{L:ord2}
For adjacent vertices $L', L \in J(n,v)$, 
\[
L' < L  ~\Leftrightarrow~  L' <_k L ~\Leftrightarrow~ |L'| < |L|.
\]
\end{lemma}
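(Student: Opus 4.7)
The plan is to reduce all three comparisons to a single arithmetic condition on the unique element exchanged between $L$ and $L'$. Since the vertices are adjacent in $J(n,v)$, they differ in exactly one element, so I can write $L \setminus L' = \{a\}$ and $L' \setminus L = \{b\}$ with $a \neq b$. By the symmetry of interchanging $L$ and $L'$, it suffices to show that each of $L' < L$, $L' <_k L$, and $|L'| < |L|$ is equivalent to the condition $a > b$.

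The third equivalence is essentially a computation: writing $L = (\ell_0 < \cdots < \ell_{v-1})$, we have $|L| = \sum_i (\ell_i - i) = \sum_{x \in L} x - \binom{v}{2}$. The binomial term cancels in $|L| - |L'|$, leaving $a - b$, so $|L| > |L'|$ iff $a > b$.

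For the lexicographic comparison, assuming $a > b$, I would let $S = L \cap L'$ and set $p = |\{x \in S : x < b\}|$ and $q = |\{x \in S : x < a\}|$, so $p \leq q$. In the increasing listings of $L$ and $L'$, positions $0, \ldots, p-1$ agree (they are the $p$ smallest elements of $S$), while at position $p$, $L'$ has $b$ and $L$ has either $a$ itself (if $p = q$) or the $(p+1)$-st smallest element of $S$ (if $p < q$); in both sub-cases the entry of $L$ at position $p$ exceeds $b$ because $b \notin S$, so $L' < L$ lexicographically. Reversing the roles handles the other direction.

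For the $k$-lexicographic comparison, I would note that $|L \cap E_0| - |L' \cap E_0|$ equals $[a < k] - [b < k]$ in Iverson-bracket notation, and since $a < k$ forces $b < a < k$, this difference lies in $\{-1, 0\}$. When it equals $-1$, we get $|L' \cap E_0| > |L \cap E_0|$ and $L' <_k L$ directly; when it is $0$, the tie-breaker defers to the lexicographic order, which was already handled. So the two refinements of adjacency agree, and each coincides with the condition $a > b$.

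The main obstacle I foresee is simply careful bookkeeping in the lex step, specifically confirming the sub-case $p < q$: the $(p+1)$-st smallest element of $S$ is strictly greater than $b$ precisely because $b \notin S$ while exactly $p$ elements of $S$ lie below $b$. Everything else reduces to arithmetic identities and a short case analysis.
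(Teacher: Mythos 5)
Your proof is correct and follows essentially the same approach as the paper: both reduce the three comparisons to a single numeric comparison between the two elements exchanged when passing from $L'$ to $L$ (the paper writes $a = L'\setminus L$, $b = L\setminus L'$ and observes the equivalence with $a<b$; you use the opposite labeling and target $a>b$). The paper's proof is far terser, dispatching the lexicographic and $|\cdot|$ comparisons with a single "clearly" and the $<_k$ comparison with a three-case split on the positions of $a,b$ relative to $k$, which your Iverson-bracket argument mirrors; your extra bookkeeping on the lex step is a valid filling-in of details the paper takes as obvious.
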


\begin{proof}
Let $a = L' \backslash L$ and $b = L \backslash L'$. Clearly, $L' < L$ if and only if $a < b$ if and only if $|L'| < |L|.$
Moreover, $L' <_k L$ if and only if ($a < k \leq b$ or $a < b \leq k-1$ or $k \leq a < b$) if and only if $a < b$.
\end{proof}

\begin{proposition}
Let $g$ be a Vandermonde matrix. Let rows and columns in $\Lambda(g)$ share the same ordering in which row $L'$ precedes row $L$ whenever $L', L \in J(n,v)$ are adjacent  
and $|L'| < |L|.$ Then, for every $L \in J(n,v)$, the determinant of the submatrix $\Lambda(g)_{\leq L} = \Lambda(g)_{L_1, L_2 \leq L}$ 
factors as a product of binomial differences $\alpha_i - \alpha_j$, $i \neq j$. 
\end{proposition}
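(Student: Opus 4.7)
The plan is to exploit the classical LU decomposition $g = L'U'$ of the Vandermonde matrix, where $L'$ is lower unitriangular and $U'$ has rows $ev(f_i)$ with $f_i(x)=\prod_{j<i}(x-\alpha_j)$, and to transport this factorization to $\Lambda(g)$ via the functoriality of Lemma~\ref{L:func}. I write $L'$ for the lower factor to keep it distinct from the vertex label $L\in J(n,v)$ in the statement.

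First I would verify that any total order satisfying the stated adjacency condition refines the Bruhat order on $J(n,v)$. Every Bruhat covering pair $L_1\lessdot L_2$ differs by exactly one element as a subset: if the symmetric difference had size four or more, then swapping only one of the differing elements would yield a proper intermediate, contradicting the covering property. Hence every Bruhat cover is Johnson-adjacent. By Lemma~\ref{L:ord2}, for Johnson-adjacent vertices the conditions $|L_1|<|L_2|$, $L_1<L_2$ in lex order, and $L_1<L_2$ in Bruhat all coincide. The hypothesis therefore agrees with Bruhat on every covering pair, and since Bruhat is the transitive closure of its covers, the ordering is a linear extension of Bruhat.

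With the refinement established, Lemma~\ref{L:upp} applies to give that $\Lambda(U')$ is upper triangular, and via the identity $\Lambda(M^T)=\Lambda(M)^T$ the matrix $\Lambda(L')$ is lower triangular with ones on the diagonal (each diagonal block $\Lambda(L')_{I,I}$ is the determinant of a lower unitriangular minor). Functoriality then yields $\Lambda(g)=\Lambda(L')\Lambda(U')$, and the lower-triangular block pattern of $\Lambda(L')$ lets one evaluate the leading principal submatrix of the product as the product of leading principal submatrices, giving
\[
\det\Lambda(g)_{\leq L}=\det\Lambda(L')_{\leq L}\cdot\det\Lambda(U')_{\leq L}=\prod_{I\leq L}\prod_{i\in I}\prod_{j<i}(\alpha_i-\alpha_j),
\]
the required product of binomial differences. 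The main obstacle is the combinatorial first step, verifying that the adjacency condition forces a linear extension of Bruhat; once that is in place, functoriality and the explicit diagonal entries $U'_{ii}=f_i(\alpha_i)=\prod_{j<i}(\alpha_i-\alpha_j)$ finish the argument with essentially no further calculation.
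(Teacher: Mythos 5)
Your proof is correct, but it takes a genuinely different route from the paper. The paper argues by induction on the leading principal minor, comparing the polynomial degree of $\det\Lambda(g)_{\leq L}$ with a count of linear factors forced by divisibility: each new column contributes $d_0$ factors from the Vandermonde $\det(x^{L_0};\alpha_L)$ dividing every entry, plus one factor $\alpha_a-\alpha_b$ for each earlier adjacent $L'$ (since $\alpha_a=\alpha_b$ makes two columns coincide), with Lemmas~\ref{L:ord1}--\ref{L:ord2} showing the count keeps pace with the degree increment $|L|+d_0$. You instead push the classical LU decomposition of the Vandermonde matrix through $\Lambda$ by Lemma~\ref{L:func} --- in effect a direct application of Proposition~\ref{P:LU} --- and then use $(\Lambda(L')\Lambda(U'))_{\leq L}=\Lambda(L')_{\leq L}\,\Lambda(U')_{\leq L}$, valid because $\Lambda(L')$ is lower triangular. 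Your route yields the explicit closed form $\prod_{I\leq L}\prod_{i\in I}\prod_{j<i}(\alpha_i-\alpha_j)$, which the paper's count-matching argument leaves implicit, at the modest cost of the preliminary check that the hypothesized ordering refines the Bruhat order. On that check, two details should be sharpened: the swap showing Bruhat covers are Johnson-adjacent must be done at the right sorted position (take the largest index $i$ with $\beta_i<\alpha_i$ and raise $\beta_i$ to $\alpha_i$; then $\alpha_i<\alpha_{i+1}=\beta_{i+1}$ keeps the tuple sorted), and Lemma~\ref{L:ord2} compares lex, $k$-lex, and $|\cdot|$ but does not mention Bruhat, so you should note separately that for adjacent $L'=L\setminus\{a\}\cup\{b\}$ Bruhat comparability also reduces to $b<a$.
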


\begin{proof}
We prove by induction to $L$ that $d_{\leq L} = \det \Lambda(g)_{\leq L}$ factors completely, as polynomial in $\alpha_0, \alpha_1, \ldots, \alpha_{n-1},$ into factors $\alpha_i - \alpha_j$. In general factors appear with multiplicities. For the base case $L_0 = \{ 0,\ldots,v-1 \}$, the scalar $\Lambda(g)_{\leq L_0} =  \det(x^{L_0}; \alpha_{L_0})$ is a Vandermonde determinant of degree $d_0 = \sum_{i=0}^{v-1} i$. For the inductive case, observe that each new column $L$ contributes two types of linear factors to $d_{\leq L}$. There is a first contribution of $d_0$ factors because every entry in column $L$ is divisible by $\det(x^{L_0}; \alpha_L)$ of degree $d_0$. Secondly, 
there is a contribution of one linear factor $\alpha_i-\alpha_j$ for each $L' < L$ that is adjacent to~$L$.
By Lemma \ref{L:ord1} and Lemma \ref{L:ord2} the number of such $L'$ is $|L|.$
As $\Lambda(g)_{< L}$ is enlarged to $\Lambda(g)_{\leq L}$, both the degree of the determinant and its number of linear factors increases by $|L|+d_0$. 
\end{proof}

The proposition applies to all principal minors of the matrix $\Lambda(g)$. In particular, for the full matrix $\Lambda(g)$ it follows that 
$\det \Lambda(g) = (\det g)^m$, for $m = {n-1 \choose v-1}.$ 

Let $I_0 = (0,1, \ldots, v-1)$ and let $\pi_0 = ( \det (x^{I_0}; \alpha_L) : L \in J(n,v) )$ be a vector of $v \times v$ Vandermonde determinants.
Replacing the determinant 
$\det(x^I ; \alpha_L)$ in a generator $\pi(M) = ( \det (x^{I}; \alpha_L) : L \in J(n,v) )$ for a Johnson graph code with its Schur polynomial $\det (x^I; \alpha_L) / \det (x^{I_0}; \alpha_L)$ gives an equivalent set of rescaled generating vectors $\pi(M) \Delta_0$,
for the diagonal matrix $\Delta_0 = \mathrm{diag}\,( \pi_0 )^{-1}.$ This replaces the code with an equivalent code with the same properties
but normalized such that the leading generator for the code is the allone vector. 

Schur polynomials are homogeneous symmetric polynomials with unique expressions on a basis
of monomial symmetric functions \cite[Corollary 7.10.6]{Stanley}. We point out and illustrate with an example that the triangular matrix that transforms Schur polynomials into monomial symmetric functions is in general not compatible with the $k$-lexicographic order that we use for the generators of a Johnson graph code. Thus, in general, replacing Schur functions with the leading monomial symmetric functions in their expansion will change the code. It can be shown however that these different codes are again Johnson graph codes.

\begin{example} \label{ex:6331}
Consider the Johnson graph code $JGC(6,3,3,1)$. Let $A = \{ 0, 1, 2 \}.$ 
The Schur polynomials of degree $4$ and $5$, and their expansion as sums of monomial symmetric functions are
\begin{align*}
\begin{array}{rrrrrrrrrrr}
I = \{ 0, 2, 5 \} &r = 1 & &s_{310} = &m_{310} &+ & m_{220} &+ &2 m_{211}, \\ \noalign{\smallskip}
      \{ 0, 3, 4 \} &2 & &s_{220} = &                  &  & m_{220} &+ &m_{211}, \\  \noalign{\smallskip}
      \{ 1, 2, 4 \} &1 & &s_{211} = &                  &  &               &  &m_{211}. 
\end{array}
\\
\begin{array}{rrrrrrrrr}
I = \{ 0, 3, 5 \} &r = 2 & &s_{320} = &m_{320} &+ & m_{311} &+  &2 m_{221}, \\
      \{ 1, 2, 5 \} & 1 & &s_{311} = &                  &  & m_{311} &+ &m_{221}, \\
      \{ 1, 3, 4 \} & 2 & &s_{221} = &                  &  &               &  &m_{221}. 
\end{array}
\end{align*}
The Schur polynomials $s_{310}, s_{211}, s_{311}$ have $I = 025, 124, 125 \in S_1(012)$ and the
corresponding generators $\pi(M)$ are generators for $JGC(6,3,3,1).$ They can not be expressed as linear combinations 
of the corresponding monomial symmetric functions $m_{310}, m_{211}, m_{311}$. The triangular transformation that
relates Schur polynomials and monomial symmetric functions of the same degree in general involves Schur polynomials from
different shells. 
\end{example}

We mention three types of codes that share common properties with Johnson graph codes: Grassmann codes and Schubert codes, Polynomial Chinese remainder codes, and product matrix MSR codes. 

For $v \leq n$, rational points on the Grassmann variety $G_{v,n}$ over a finite field $F$ correspond to $v$ dimensional $F$-linear subspaces of $F^n$. The variety has a smooth embedding, using Pl\"ucker coordinates, into projective space $\mathbb{P}^{N-1}$, for $N = {n \choose v}.$ The Grassmann code
is the row space of the $N \times K$ matrix whose columns are the images in $\mathbb{P}^{N-1}$ of rational points $P_1, P_2, \ldots, P_K \in G_{v,n}$. 
Over a field $F$ of size $q$, $K \leq \# G_{v,n}(F) = {n \choose v}_q$. Let $g = \{ g_1, \ldots, g_n \}$ be a basis for $F^n$. For $\alpha \in J(n,v)$, 
write the elements in $\alpha$ in increasing order $1 \leq \alpha_1 < \cdots < \alpha_v \leq n$. The Schubert variety $\Omega_\alpha \subset G_{v,n}$
is the subvariety of subspaces $W$ whose intersection with the flag generated by $g$ reaches dimension $i$ after at most $\alpha_i$ steps. 
\[
\Omega_\alpha = \{\, W \in G_{v,n} \,:\, \dim (W \cap \langle g_1, \ldots, g_{\alpha_i} \rangle) \geq i, ~ \text{for $i = 1,\ldots,v.$} \,\}.
\]
A Schubert code is defined by projecting the $N \times K$ matrix onto a $N \times K_\alpha$ submatrix with columns in $\Omega_\alpha$. In general this reduces the rank of the matrix to $k_\alpha$, and yields a Schubert code of dimension $k_\alpha$ and length $K_\alpha$. 
With hindsight, our construction of Johnson graph codes matches the column space of the $N \times K_\alpha$ matrix, for the maximal $\alpha$ in the Bruhat order with $\alpha_t = k.$ The Johnson graph code, as row space of a $K_\alpha \times N$ matrix of rank $k_\alpha$, has dimension $k_\alpha$ and length $N$. In our description, we find generators for the code by taking the top $k_\alpha = \lvert B_r(\{0,1,\ldots,k-1\}\rvert$ rows in a $N \times N$ matrix $\Lambda(g)$.
With the above we can interpret information sets for the Johnson graph code as projections of the Pl\"ucker embedding of a Schubert variety that
separate points.  

\bigskip

We give a different construction for codes $J(n,v,v,1)$. The proof for the general case follows after the example.
 
\begin{example}
Example \ref{ex:5221b} describes a code $C = JGC(5,2,2,1)$ and the equivalent code $C' = JGC(5,3,3,1)$.
The first code uses an embedding of the pair $\{ x,y \} \subset \{ \alpha_0,\alpha_1,\ldots,\alpha_4 \}$ with Pl\"ucker coordinates $\det_I(x,y)$,
$I \in \{ 01, 02, 03, 04, 12,13,14 \}$. The second code uses an embedding of the triplet $\{ x,y,z \} \subset \{ \alpha_0,\alpha_1,\ldots,\alpha_4 \}$ with Pl\"ucker coordinates $\det_I(x,y,z)$, $I \in \{ 012, 013, 014, 023, 024, 123, 124 \}$. A different embedding that also yields a code $JGC(5,2,2,1)$ is
as vector of coefficients for the polynomial
\[
(1+xt+x^2t^2+x^3t^3)(1+yt+y^2t^2+y^3t^3)    \qquad (n=5, v=2).
\]
Similarly the coefficients for the polynomial
\[
(1+xt+x^2t^2)(1+yt+y^2t^2)(1+zt+z^2t^2)     \qquad (n=5,v=3).
\]
give an embedding for $\{ x,y,z \}$ that yields a code $JGC(5,3,3,1).$
\end{example}

\begin{proposition}
Let $v \leq n$. Let $f_0, f_1, \ldots, f_{n-1} \in F[x]$ be $n$ polynomials of degree $n-v$, such that
any two polynomials are relative prime and any $n-v+1$ polynomials are linearly independent. For each $L \in J(n,v)$, let $f_L = \prod_{i \in L} f_i.$ Then $\deg f_L = v(n-v).$
For any given $A \in J(n,v)$, the collection $\{ f_L : \lvert L \cap A \rvert \geq v-1 \}$ forms  a basis for $F[x]_{\leq v(n-v)}.$
\end{proposition}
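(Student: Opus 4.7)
The plan is as follows. First I would verify the easy parts: $\deg f_L = v(n-v)$ for every $L \in J(n,v)$, because each $f_i$ has degree $n-v$, and the cardinality
\[
\bigl|\{L \in J(n,v) : |L \cap A| \geq v-1\}\bigr| \;=\; 1 + v(n-v) \;=\; \dim F[x]_{\leq v(n-v)}
\]
matches the target dimension, so it is enough to establish linear independence of the collection $\{f_L : L \in B_1(A)\}$.

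For this, I would introduce the linear map
\[
\phi : F[x]_{\leq v(n-v)} \longrightarrow F[x]/(f_A) \oplus F, \qquad p \longmapsto \bigl(p \bmod f_A,\ [x^{v(n-v)}]\,p\bigr),
\]
where $[x^{v(n-v)}]p$ is the coefficient of $x^{v(n-v)}$ in $p$. A dimension count together with the observation that a nonzero polynomial divisible by $f_A$ has degree at least $v(n-v)$ shows that $\phi$ is an isomorphism. Since $\phi(f_A)=(0,\mathrm{lc}(f_A))$ with $\mathrm{lc}(f_A)\neq 0$, this single element absorbs the $F$-summand, and the problem is reduced to proving that the residues $u_{ij} := (f_A/f_i)\,f_j \bmod f_A$, for $i\in A$ and $j\notin A$, form a basis of the $v(n-v)$-dimensional quotient $F[x]/(f_A)$.

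Because the $f_i$'s are pairwise coprime, the Chinese Remainder Theorem gives $F[x]/(f_A) \cong \prod_{i\in A} F[x]/(f_i)$, and under this isomorphism each $u_{ij}$ is supported only in the $i$-th factor, with value $(f_A/f_i)\cdot f_j \bmod f_i$. Since $(f_A/f_i)\bmod f_i$ is a unit in $F[x]/(f_i)$, the problem decomposes along $i \in A$ and reduces, for each fixed $i$, to the assertion that $\{f_j \bmod f_i : j\notin A\}$ is a basis of the $(n-v)$-dimensional space $F[x]/(f_i)$. This last reduction is where I expect the real subtlety: a modular relation $\sum_{j\notin A} c_j f_j \equiv 0 \pmod{f_i}$ has to be lifted to an honest identity in $F[x]$. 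The degree bound $\deg \sum_{j\notin A} c_j f_j \leq n-v = \deg f_i$ forces $\sum_{j\notin A} c_j f_j = c\,f_i$ for some $c\in F$, which is now a linear dependence among the $n-v+1$ polynomials $\{f_i\}\cup\{f_j : j\notin A\}$; by the hypothesis such a dependence must be trivial, which closes the argument.
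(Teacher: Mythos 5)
Your proposal is correct, and it takes a genuinely different route from the paper's. The paper proves that the collection \emph{spans} $F[x]_{\leq v(n-v)}$ by a descending argument: first, for each $i \in A$, the $n-v+1$ polynomials $f_L$ with $A\setminus L = \{i\}$ or $L=A$ are $(f_A/f_i)\,f_j$ for $j\in\{i\}\cup A^c$, and the hypothesis that any $n-v+1$ of the $f_j$'s are independent shows these span $(f_A/f_i)\cdot F[x]_{\leq n-v}$; then an induction using $\gcd(f_i,f_j)=1$ (a Bezout/surjectivity step at each level) shows the span contains all degree-bounded multiples of $f_A/\prod_{i\in S}f_i$ for every $S\subseteq A$, and $S=A$ finishes. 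You instead prove \emph{independence}, by building the isomorphism $F[x]_{\leq v(n-v)}\cong F[x]/(f_A)\oplus F$, peeling off $f_A$ to absorb the $F$-summand, and then invoking the Chinese Remainder Theorem $F[x]/(f_A)\cong\prod_{i\in A}F[x]/(f_i)$. This factors the problem into $v$ independent $(n-v)$-dimensional subproblems, each solved in one stroke: a relation $\sum_{j\notin A} c_j f_j \equiv 0 \pmod{f_i}$ is lifted, using the degree bound, to an honest dependence among the $n-v+1$ polynomials $\{f_i\}\cup\{f_j : j\notin A\}$, which the hypothesis forbids. Both arguments rest on the same two hypotheses (pairwise coprimality driving a CRT/Bezout mechanism, and independence of any $n-v+1$ of the $f_j$'s), but yours achieves a cleaner modular decomposition with no iterative surjectivity checks, while the paper's has the virtue of exhibiting the spanning combinations explicitly. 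Both are valid.
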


\begin{proof}
The size of the collection $\{ f_L \}$ is $1+v(n-v),$ which is the dimension of $F[x]_{\leq v(n-v)}.$ It suffices to prove that the $f_L$ in the collection span
 $F[x]_{\leq v(n-v)}.$ For any $i \in A$, the collection contains $n-v+1$ multiples $(f_A / f_i) \cdot f_j,$ for $j=i$ or $j \not \in A.$ Since the $f_j$
 span $F[x]_{\leq n-v}$, the collection contains all multiples of $f_A / f_i$, for all $i \in A.$ Since $f_j$ and $f_i$ are relative prime, the collection spans
 all multiples of $f_A / f_i f_j$, for all pairs $i,j \in A$. With induction it follows that the collection spans all multiples of $1$, i.e., spans $F[x]_{\leq v(n-v)}.$   
\end{proof}

\begin{corollary}
A special case of the proposition is $f_i = 1 + \alpha_i x + \cdots + \alpha_i^{n-v} x^{n-v}$, for distinct $\alpha_0, \alpha_1, \ldots, \alpha_{n-1} \in F$,
such that $\gcd(n-v+1,|F|-1)=1$.
\end{corollary}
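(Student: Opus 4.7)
The plan is to verify the three hypotheses of the preceding proposition for the family $f_i(x) = 1 + \alpha_i x + \cdots + \alpha_i^{n-v} x^{n-v}$: each $f_i$ has degree $n-v$, any two of them are coprime, and any $n-v+1$ of them are linearly independent in $F[x]_{\leq n-v}$. Since the degree hypothesis fails whenever $\alpha_i = 0$, I assume throughout that all $\alpha_i$ are nonzero; at most one of the distinct $\alpha_i$ could have been zero anyway.

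The degree condition is immediate, as the leading coefficient of $f_i$ is $\alpha_i^{n-v} \neq 0$. Linear independence is equally short: the coefficient vector of $f_i$ in the basis $1, x, \ldots, x^{n-v}$ of $F[x]_{\leq n-v}$ is $(1, \alpha_i, \alpha_i^2, \ldots, \alpha_i^{n-v})$, so any $n-v+1$ such vectors form a square Vandermonde matrix in distinct nodes $\alpha_{i_0}, \ldots, \alpha_{i_{n-v}}$, which is invertible.

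The substantive step is pairwise coprimality. The key tool is the telescoping identity
\[
(1 - \alpha_i x)\, f_i(x) = 1 - (\alpha_i x)^{n-v+1},
\]
valid in $F[x]$, together with its analogue for $f_j$. Suppose $\beta \in \bar F$ were a common root of $f_i$ and $f_j$ with $i \neq j$. Evaluating the two identities at $x = \beta$ forces $(\alpha_i \beta)^{n-v+1} = 1 = (\alpha_j \beta)^{n-v+1}$. Since $f_i(0) = 1$ we have $\beta \neq 0$, and by assumption $\alpha_i, \alpha_j \neq 0$, so dividing yields $(\alpha_i / \alpha_j)^{n-v+1} = 1$ with $\alpha_i/\alpha_j \in F^\ast$. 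The subgroup of $(n-v+1)$-th roots of unity in the cyclic group $F^\ast$ has order $\gcd(n-v+1,|F|-1)$, which equals $1$ by hypothesis, so $\alpha_i = \alpha_j$, contradicting distinctness.

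The main subtlety to flag is the case $\alpha_i \beta = 1$, where the linear factor $1 - \alpha_i x$ also vanishes at $x = \beta$; one might worry that $\beta$ is a spurious root coming from the factor rather than from $f_i$. This is not actually an obstacle: the identity is one of polynomials and remains valid pointwise on $\bar F$, so whenever $\beta$ is a root of $f_i$ the right-hand side is forced to vanish regardless of whether $1 - \alpha_i \beta = 0$. With both hypotheses of the proposition now in place, the corollary follows.
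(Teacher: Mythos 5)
The paper states this corollary without proof, so there is no argument of its own to compare against; yours is correct and is the natural verification of the three hypotheses of the proposition. The degree and Vandermonde linear-independence checks are immediate, and pairwise coprimality via the telescoping identity $(1-\alpha_i x)f_i(x)=1-(\alpha_i x)^{n-v+1}$ together with the fact that the $(n-v+1)$-th roots of unity in the cyclic group $F^\ast$ form a subgroup of order $\gcd(n-v+1,|F|-1)=1$ is exactly the right mechanism. Your observation that one must also require $\alpha_i\neq 0$ --- since $\alpha_i=0$ gives $f_i=1$ of degree $0$, breaking the degree hypothesis of the proposition --- is a real, if minor, omission in the paper's statement, and you are correct that the potential worry about the case $1-\alpha_i\beta=0$ is dissolved simply by reading the polynomial identity pointwise.
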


A special case of a Johnson graph code is the following. Let $\alpha_0, \alpha_1, \ldots, \alpha_{n-1} \in F$ be distinct field elements. Let $S$ be a symmetric matrix of size $k-1 \times k-1$, with associated symmetric form $f(x,y) = (1,x,\ldots,x^{k-2}) S (1,y,\ldots,y^{k-2})^T$. Label vertices $\{ i,j \} \in J(n,2)$ with $f(\alpha_i,\alpha_j)$. Then $S$, and thus $f(\alpha_i,\alpha_j)$ for all $\{ i,j \} \in J(n,2),$ is uniquely determined by the set of values $f(\alpha_i, \alpha_j)$, $i,j \in A$, for any $k$-subset $A$ of $\{ 0,1,\ldots,n-1 \}.$ Thus the code is of type
$JGC(n,2,k,0).$ In \cite{Rashmi+11}, two copies of the code are combined to form an MSR regenerating code. Let the second copy be defined with a matrix $T$ and symmetric form $g(x,y)$.
For each $i \in \{ 0,1,\ldots,n-1 \}$, node $i$ stores the polynomial $f(\alpha_i,y)+\alpha_i g(\alpha_i,y),$ which is a polynomial of degree $k-2$ in the single variable $y$.
When data is collected from $k$ nodes $A$, any two accessed nodes $i,j \in A$ can recover $f(\alpha_i,\alpha_j)$ and $g(\alpha_i,\alpha_j)$ from
$f(\alpha_i,\alpha_j)+\alpha_i g(\alpha_i,\alpha_j)$ and $f(\alpha_j,\alpha_i)+\alpha_j g(\alpha_j,\alpha_i)$, that is together they can decouple their stored values. 
With the decoupled values, the matrices $S$ and $T$ can be recovered as before. In this construction, the Johnson graph code is used as inner code and the coupling as outer code.
In the next section, Johnson graph codes are used as outer codes and layered codes as inner codes.   

\section{Concatenated Layered Codes} \label{S:appl}

In this section we use Johnson graph codes as a tool to concatenate layered codes
with different layer sizes into longer codes with improved download properties. In a layered code, data in different layers is independent, and data can in general only be recovered by accessing any $n-1$ out of $n$ nodes. In a concatenated code, Johnson graph codes provide relations between data in different layers that ensure that data can be recovered from fewer than $n-1$ nodes. The design of concatenated codes is concerned with (1) The amount of data that remains unrecovered when accessing layered codes in fewer than $n-1$ nodes, (2) The extra amount of data that can be recovered through the use of Johnson graph codes, and (3) How the latter can compensate for the former. For the most part in this section we only need to refer to basic features of layered codes and Johnson graph codes which will reduce the design problem to a combinatorial tiling problem. 

The main feature of layered codes that we use is a partition into shells of all layers of a given size when $k$ of the $n$ nodes are accessed. Table~\ref{tbl:48} gives a partition into shells for layers of size five when $n=8$ and $k=4$. For the same $n$ and $k$, Table~\ref{T:v5k4} gives the shell sizes for the partitions
of layers with size five or smaller.

 \begin{table}[h]
$\begin{array}{rccccccccccccccccccccccccccc} \\ \toprule  \noalign{\smallskip}
\multicolumn{2}{r}{|A \cap L|=} &4 &3 &2 &1 &0 & &\# L \\ 
 \noalign{\smallskip}
 \midrule 
 \noalign{\smallskip}
|L|=5 & &{4 \choose 4} {4 \choose 1} &{4 \choose 3} {4 \choose 2}  &{4 \choose 2} {4 \choose 3}  &{4 \choose 1} {4 \choose 4}  &  & &{8 \choose 5} \\ \noalign{\medskip}
4 & &{4 \choose 4} {4 \choose 0} &{4 \choose 3} {4 \choose 1}  &{4 \choose 2} {4 \choose 2}  &{4 \choose 1} {4 \choose 3}  &{4 \choose 0} {4 \choose 4}  & &{8 \choose 4} \\ \noalign{\medskip}
3 & & &{4 \choose 3} {4 \choose 0} &{4 \choose 2} {4 \choose 1} &{4 \choose 1} {4 \choose 2}  &{4 \choose 0} {4 \choose 3}  & &{8 \choose 3} \\ \noalign{\medskip}
2 & & & &{4 \choose 2} {4 \choose 0} &{4 \choose 1} {4 \choose 1}  &{4 \choose 0} {4 \choose 2} & &{8 \choose 2} \\ \noalign{\medskip}
1 & & & & &{4 \choose 1} {4 \choose 0} &{4 \choose 0} {4 \choose 1} & &{8 \choose 1} \\
\noalign{\smallskip} \midrule 
 \end{array}$
\medskip
\caption{Breakdown of all layers $L \subset \{ 0, 1, \ldots, 7 \}$ of a given size according to how they intersect $A$ of size $|A|=4$.}  \label{T:v5k4}
\end{table}

Symbols in a layer $L \subset \{ 0, 1, \ldots, n-1 \}$ are protected by a single parity $\sum_{i \in L} c_i = 0$. So that symbols in a layer $L$ are known to a data collector $A \subset \{ 0, 1, \ldots, n-1 \}$ with access to symbols $\{ c_i : i \in A \}$ if and only if $| A \cap L | \geq |L|-1$. 
   
\begin{definition}
Given a layered code with $n$ nodes, and upon accessing a subset of nodes $A \subset \{ 0,1,\ldots,n-1 \}$, we call a layer $L$ \emph{fully-accessed}
if all symbols in $L$ are available to the data collector. We call $L$ \emph{sufficiently-accessed} if $L$ is not fully-accessed but its symbols can be recovered from the collected symbols with the single parity check on the symbols in~$L$. A layer that can not be recovered using a combination of
collected symbols and parity checks is called \emph{under-accessed}.
\end{definition}

In Table \ref{T:v5k4}, sufficiently-accessed layers, with $|A \cap L| = |L|-1$, appear on the diagonal. Fully-accessed layers appear below the diagonal and under-accessed layers above the diagonal. A fully-accessed layer $L$ does not need the parity check $\sum_{i \in L} c_i = 0$ to recover its symbols. If we replace the check with a check of the
form $\sum_{i \in L} c_i = s$, for a symbol $s$ of our choice, then the fully-accessed layer can pass the symbol $s$ to a different under-accessed 
layer. 
The transfer of data proceeds in rounds. Each round requires a choice of three parameters $w$, $v$ and $r$. Here $w$ is the size of a fully-accessed layer providing helper data, $v$ is the size of an under-accessed layer receiving helper data, and $r$ restricts the layers receiving helper data to those with
$w \leq |A \cap L | \leq v-1-r.$ It is assumed that at the time of transfer, data has been recovered from all layers of size $w < v$ and from all layers of size $v$ with $|A \cap L | > v-1-r.$ 
A Johnson graph code $JGC(n-w,v-w,k-w,r)$ is used to transfer one symbol of helper data to each layer $L$ with $|L|=v$ and $w \leq |A \cap L | \leq v-1-r$ using a combination of helper data from sublayers of size $w$ and data available from layers with $|L|=v$ and $|A \cap L | > v-1-r.$ 
Table \ref{T:v5k4} can be used to keep track of the amount of data that needs to be transferred to under-accessed layers. The problem of full
data recovery reduces to a combinatorial tiling problem of choosing a sequence of rounds $(w_i,v_i,r_i)$ that will transfer enough helper data to under-accessed layers, in the right order, to recover all their data.  

There is a wide range of choices for the sequence $(w_i,v_i,r_i)$ to achieve full data recovery. 
There is a unique sequence if we restrict the choice to $w_i=1$ in each round. This results in concatenated codes with the same storage and bandwidth parameters as the improved layered codes in \cite{Senthoor+15}. Another choice is to set $r = v-1-w$ in each round, so that helper data from $L_w$ is transferred only
to layers $L$ with $A \cap L = L_w$. This results in codes with the same storage and bandwidth as cascade codes in \cite{Elyasi+18}. We include examples of each type that recover data in the layers of size $|L|=5$ in Table \ref{T:v5k4} under the assumption that data in smaller layers has been recovered. 

\begin{example} \label{ex:dt847}

\begin{table}[h]
$\begin{array}{rccccccccccccccccccccccccccc} \\ \toprule  \noalign{\smallskip}
\multicolumn{2}{r}{|A \cap L|=} &4 &3 &2 &1 &0 & &\text{multiplicity} \\ 
 \noalign{\smallskip}
 \midrule 
 \noalign{\smallskip}
|L|=5 & & &-6 \cdot 1  &-4 \cdot 2  &-1 \cdot 3  &  & &1\,\times \\
4 & &+1 \cdot 1 &  &-6 \cdot 1  &-4 \cdot 2  &-1 \cdot 3  & &0\,\times \\
3 & & &+1 \cdot 1 &  &-6 \cdot 1 &-4 \cdot 2 & &6\,\times \\
2 & & & &+1 \cdot 1 &  &-6 \cdot 1 & &8\,\times \\
1 & & & & &+1 \cdot 1 & & &39\,\times \\
\noalign{\smallskip} \midrule 
\multicolumn{1}{l}{\Sigma=}  & &0 &0 &0 &0 &-96           \\  
 \noalign{\smallskip} \bottomrule
 \end{array}$
\medskip
\caption{Unique multiplicities such that within each column except the last fully-accessed layers (below the diagonal) compensate under-accessed layers (above the diagonal).} \label{T3}
\end{table}

Table \ref{T3} corresponds to a concatenation of layered codes with $v=5$ ($1 \times$), $v=3$ ($6 \times$), $v=2$ ($8 \times$), and  $v=1$ ($39 \times$).
The multiplicities are unique if we restrict data transfer to transfer between layers in the same column, i.e. data from $L_w$ is transferred only
to layers $L$ with $A \cap L = L_w$. 
We describe in detail how for the given multiplicities data stored in layers of size $w < 5$ can be used to recover data in under-accessed layers of size $v=5$. Let $A = \{ 0,1,2,3 \}$. When considered as vertices in the Johnson graph $J(8,5)$, the ${8 \choose 5}=56$ layers of size $5$ divide over shells $S_0(A), S_1(A), S_2(A), S_3(A)$ of sizes $4, 24, 24, 4$, respectively (Table \ref{tbl:48}). Layers in $S_0(A)$ are sufficiently-accessed and layers in the remaining shells are under-accessed. 

We recover the data stored in the layers $S_1(A), S_2(A), S_3(A)$ recursively in three rounds. For the recovery, helper data wil be transferred from sublayers of size $w=3, w=2, w=1,$ respectively. For $A = \{ 0,1,2,3 \}$ there are multiple fully-accessed layers of size $w=3$ $(4 \times)$, $w=2$ $(6 \times)$, and $w=1$ $(4 \times)$.   

\begin{table}[h]
$\begin{array}{ccc}
\begin{array}{c} J(5,2) \\[1.5ex] (w=3, L_w = \{1,2,3\}) \end{array} &\qquad &\begin{array}{
                       c@{\hspace{3mm}}c
                       c
                       @{\hspace{2mm}}c
                       c@{\hspace{2mm}}c@{\hspace{2mm}}c
                       @{\hspace{2mm}}c
                       c@{\hspace{2mm}}c@{\hspace{2mm}}c@{\hspace{2mm}}c
                       @{\hspace{3mm}}ccccccccccccc} 
\toprule
{\#}  &   &0   & &\multicolumn{3}{c}{1, 2, 3} & &\multicolumn{4}{c}{4, 5, 6, 7} & &r  \\ 
\midrule
4 &      &\st    & &\st&\st&\st & &\st &\ds &\ds &\ds & & 0  \\
6 &      &\ds   &  &\st &\st &\st & &\st&\st &\ds &\ds & & 1  \\
\bottomrule 
\end{array} \\ \noalign{\bigskip}
\begin{array}{c} J(6,3) \\[1.5ex] (w=2, L_w = \{ 2,3\}) \end{array} &\qquad &\begin{array}{
                       c@{\hspace{3mm}}c
                       c@{\hspace{2mm}}c
                       @{\hspace{2mm}}c
                       c@{\hspace{2mm}}c
                       @{\hspace{2mm}}c
                       c@{\hspace{2mm}}c@{\hspace{2mm}}c@{\hspace{2mm}}c
                       @{\hspace{3mm}}ccccccccccccccc} 
\toprule
{\#}  & &\multicolumn{2}{c}{0, 1} & &\multicolumn{2}{c}{2,3} & &\multicolumn{4}{c}{4, 5, 6, 7} & &r  \\ 
\midrule
4 & &\st&\st & &\st &\st & &\st &\ds &\ds &\ds & & 0  \\
12 & &\ds&\st & &\st &\st & &\st&\st &\ds &\ds & & 1  \\
4 & &\ds &\ds & &\st &\st & &\st&\st&\st &\ds  & & 2 \\
\bottomrule 
\end{array}   \\ \noalign{\bigskip}
\begin{array}{c} J(7,4) \\[1.5ex] (w=1, L_w = \{ 3 \}) \end{array} &\qquad &\begin{array}{
                       c@{\hspace{3mm}}c
                       c@{\hspace{2mm}}c@{\hspace{2mm}}c
                       @{\hspace{2mm}}c
                       @{\hspace{2mm}}c
                       @{\hspace{2mm}}c
                       c@{\hspace{2mm}}c@{\hspace{2mm}}c@{\hspace{2mm}}c
                       @{\hspace{3mm}}ccccccccccccccc} 
\toprule
{\#}  & &\multicolumn{3}{c}{0, 1, 2} & &3 & &\multicolumn{4}{c}{4, 5, 6, 7} & &r  \\ 
\midrule
4 & &\st&\st&\st & &\st & &\st &\ds &\ds &\ds & & 0  \\
18 & &\ds&\st &\st & &\st & &\st&\st &\ds &\ds & & 1  \\
12 & &\ds &\ds &\st & &\st & &\st&\st&\st &\ds  & & 2 \\
1 & &\ds &\ds&\ds & &\st & &\st&\st&\st&\st  & & 3 \\
\bottomrule 
\end{array}\\ \noalign{\bigskip}
\end{array}$ 
\caption{Subgraphs of $J(8,5)$ with vertices $L \supset L_w$ and their partition into shells for $A = \{ 0,1,2,3 \}.$} \label{Tsub}
\end{table} 

(Round 1 - $w=3, v=5, r=1$) To each layer $L_w$ of size $3$ corresponds a Johnson graph $J(5,2) \subset J(8,5)$ with vertex labeling $(c_L)$. For a fully-accessed layer $L_w$, the ten layers in $J(5,2)$ belong to
$S_0(A) (4 \times), S_1(A) (6 \times)$ (Table \ref{Tsub}). The layer $L_w$ stores a vector $s = H_1 c$ of length $6$. The vector $s$ together with four available $c_L$ determines the full vector $(c_L)$. For $L \in S_1(A)$, after receiving $c_L$, the data in $L$ can be recovered. 

(Round 2 - $w=2, v=5, r=2$) To each layer $L_w$ of size $2$ corresponds a Johnson graph $J(6,3) \subset J(8,5)$ with vertex labeling $(c_L)$. For a fully-accessed layer $L_w$, the 20 layers in $J(6,3)$ belong to
$S_0(A) (4 \times), S_1(A) (12 \times), S_2(A) (4 \times)$ (Table \ref{Tsub}). The layer $L_w$ stores a vector $s = H_2 c$ of length $4$. The vector $s$ together with 16 available $c_L$ determines the full vector $(c_L)$. For $L \in S_2(A)$, recovery of its data requires two additional symbols, and we store two vectors $s$ of length $4$ at each layer $L_w$. With the two additional symbols, the data in $L$ can be recovered. 

(Round 3 - $w=1, v=5, r=3$) To each layer $L_w$ of size $1$ corresponds a Johnson graph $J(7,4) \subset J(8,5)$ with vertex labeling $(c_L)$. For a fully-accessed layer $L_w$, the 35 layers in $J(7,4)$ belong to
$S_0(A) (4 \times), S_1(A) (18 \times), S_2(A) (12 \times), S_3(A)(1 \times)$  (Table \ref{Tsub}). The layer $L_w$ stores a vector $s = H_3 c$ of length $1$. The vector $s$ together with 34 available $c_L$ determines the full vector $(c_L)$. For $L \in S_3(A)$, recovery of its data requires three additional symbols, and we store three vectors $s$ of length $1$ at each layer $L_w$. With the three additional symbols, the data in $L$ can be recovered. 
\end{example}

\begin{proposition} \label{prop:rec} Let
\[
f_\ell(t) = \frac{1}{(1-\ell t)(1+t)^\ell} = \sum_{i \geq 0} a_i t^i.
\] 
The concatenation of a single layered code with layers of size $v$ together with $a_i$ copies of layered codes
with layers of size $v-i$, for $0 < i < v$, results in a concatenated code whose data is recoverable from any $k=n-1-\ell$ nodes.
\end{proposition}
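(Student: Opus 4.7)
The plan is to verify that the multiplicities $a_i = [t^i] f_\ell(t)$ precisely balance helper supply against demand during the round-by-round recovery sketched informally in the text. Fix any accessed set $A$ with $|A| = k = n - 1 - \ell$. Organize the recovery into rounds indexed by a helper-layer size $w$ and shell distance $r$, processing smallest $w$ first; for each fully-accessed $w$-subset $L_w \subset A$, invoke Lemma~\ref{L:jgc3} for the Johnson graph code $JGC(n-w, v-w, k-w, r)$ on the Johnson subgraph $J(n-w, v-w)$ of $v$-layers containing $L_w$. Each application determines one under-accessed $v$-layer in the shell at distance $r$ from previously recovered data, at the cost of exactly one helper parity supplied by the copies of the $(v-w)$-layered code.

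Classify every layer of every size by $j = |A \cap L|$, as in Table~\ref{T3}. A fully-accessed $v'$-layer ($j = v'$) contributes one transferable helper parity per copy of the $v'$-layered code; a sufficiently-accessed one ($j = v'-1$) is neutral; an under-accessed one in column $j \leq v'-2$ carries a deficit of $v'-j-1$ helper symbols, spread over $\binom{k}{j}\binom{n-k}{v'-j}$ layers. With multiplicities $M_v = 1$ and $M_{v-i} = a_i$, summing by column gives, at each $j \in \{1,\dots,v-1\}$, one linear balance equation in the unknowns. Setting $F(t) = \sum_{i=0}^{v-1} M_{v-i}\, t^i$, the joint system collapses into the single power-series congruence
\[
F(t)\,(1-\ell t)(1+t)^{\ell} \equiv 1 \pmod{t^{v}}.
\]
Since $(1-\ell t)(1+t)^\ell$ has constant term $1$, this congruence has a unique solution mod $t^v$, namely the truncation of $f_\ell(t)$, and hence $M_{v-i} = a_i$ for $0 \leq i \leq v-1$, as claimed.

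The main obstacle is identifying the two factors on the left-hand side of the congruence. The $(1+t)^\ell$ factor should emerge from the shell count $\binom{n-k}{v'-j} = \binom{\ell+1}{v'-j}$ via the Pascal identity $\binom{\ell+1}{m} = \binom{\ell}{m} + \binom{\ell}{m-1}$, while the $(1-\ell t)$ factor records the ``one symbol saved per layer parity check'' offset against the gross deficit $v'-j$ at each under-accessed layer. A compact bookkeeping lemma, obtained by tracking the column-$j$ balance after factoring out the common binomial $\binom{k}{j}$, should match column $j$ with the coefficient of $t^{v-j}$ in the product and thereby reduce the proposition to the defining power-series identity $(1-\ell t)(1+t)^\ell f_\ell(t)=1$. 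The residual coefficient at $t^v$ (e.g.\ the $-96$ at column $j=0$ in Table~\ref{T3}) plays no role in the recovery, since column $j=0$ corresponds to layers lying entirely in $A^c$ and contains no $v$-layer once $v > n-k$.
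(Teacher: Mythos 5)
Your proposal is correct and takes essentially the same approach as the paper: both reduce recoverability to a per-shell supply/demand balance for helper parities, derive the convolution recursion $a_i = \sum_{0<j\le i} a_{i-j}\binom{n-k}{j}(j-1)$, and identify $f_\ell(t)$ as the generating function solving the truncated power-series identity $(1-\ell t)(1+t)^\ell F(t)\equiv 1 \pmod{t^v}$. The only difference is organizational --- you aggregate the balance by column $j=|A\cap L|$ of Table~\ref{T3}, whereas the paper fixes a source sublayer $L_w$ of each size and sums over its supersets --- but after cancelling the common factor $\binom{k}{j}$ the two sets of balance equations coincide, so the recursion and its solution are the same.
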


\begin{proof}
For general number of nodes $n$, top layer size $|L|=v$, and number of accessed nodes $|A|=k$, a sublayer of size $v-i$ provides $a_i$ helper symbols
to $a_{i-j} {n-k \choose j}$ layers of size $v-i+j$, each of which receives $j-1$ symbols for full data recovery, for $0 < j \leq i$. This gives the recursion 
$a_i = \sum_{0 < j \leq i} a_{i-j} {n-k \choose j} (j-1),$ with $f_{n-k-1}(t)$ as generating function.
 \end{proof}
 
We verify the parameters $M$ (total amount of data stored), $\alpha$ (number of symbols stored at a single node) and $\beta$ (number of symbols provided by each of $d$ helper nodes for repair of a failed node) for the
concatenated code obtained with the multiplicities in Table \ref{T3}. This is done in Table \ref{T4}. The columns $M_1, \alpha, \beta$ contain the parameters
of the layered codes and their sum for the given multiplicities. The 96 missing symbols in column~0 of Table~\ref{T3} are not compensated by a fully-accessed layer below the diagonal. The 96 symbols are not recoverable. In Table \ref{T4} this number is denoted by $M_0$. It is subtracted from $M_1$ to obtain the
size $M = M_1 - M_0$ of the data that is stored by the concatenated code. 

 \begin{table}[h]
$\begin{array}{rcclrccccccccccccccccc} \\ \toprule  \noalign{\smallskip}
& &\# L & &M_1 &\alpha &\beta &~  & &M_0 &M_1 - M_0&\text{multiplicity} \\ 
 \noalign{\smallskip}
 \midrule 
 \noalign{\smallskip}
|L|=5 & &56 & &224 &35 &20   & & & &224     &1\,\times \\
4 & &70 & &210 &35 &15   & & &1 \cdot 3 &207   &0\,\times \\
3 & &56 & &112 &21 &6     & & &4 \cdot 2 &104   &6\,\times  \\
2 & &28 & &28 &7 &1       & & &6 \cdot 1 &22      &8\,\times  \\
1 & &8 & &0 &1 &0         & & & &0    &39\,\times       \\  
\noalign{\smallskip} \midrule 
\multicolumn{1}{l}{\Sigma=}  & & & &1130 &256 &64         & & &96 &1024 &      \\  
 \noalign{\smallskip} \bottomrule
 \end{array}$
\medskip
\caption{Parameters $(M = M_1 - M_0, \alpha, \beta) = (1024,256,64)=(4^5,4^4,4^3)$ for a $(n,k,d)=(8,4,7)$ concatenated layered code.} \label{T4}
\end{table}

\begin{corollary} For the concatenated code in Proposition \ref{prop:rec},
\[
\alpha = [t^{v-1}]\,f_\ell(t)(1+t)^{n-1}, \qquad \beta = [t^{v-2}]\, f_\ell(t)(1+t)^{n-2}.
\]
For the special case $v=n-\ell=k+1$,
\[
M = k \alpha, \quad \alpha = (n-k)^{k}, \quad \beta = (n-k)^{k-1}.
\]
The latter parameters satisfy the MSR conditions $M = k \alpha$ and $\alpha = (d-k+1)\beta.$
\end{corollary}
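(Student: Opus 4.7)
The plan is to prove the three claims in turn: the generating-function formulas for $\alpha$ and $\beta$, their closed-form evaluation in the MSR specialisation $v = k+1$, and finally the identities $M = k\alpha$ and $\alpha = (d-k+1)\beta$. For the first step, a single layered code with $n$ nodes and layer size $u$ has $\alpha_u = \binom{n-1}{u-1}$ and $\beta_u = \binom{n-2}{u-2}$, since each node lies in $\binom{n-1}{u-1}$ layers and each pair of nodes in $\binom{n-2}{u-2}$ layers. The concatenation therefore inherits $\alpha = \sum_i a_i \binom{n-1}{v-i-1}$ and $\beta = \sum_i a_i \binom{n-2}{v-i-2}$. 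Recognising $\binom{n-1}{v-i-1} = [t^{v-i-1}](1+t)^{n-1}$, the Cauchy-product rule for power-series coefficients reads the two sums off as $\alpha = [t^{v-1}] f_\ell(t)(1+t)^{n-1}$ and likewise for $\beta$.

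For the MSR specialisation $v = k+1$ (so $\ell = n-k-1$) the factor $(1+t)^{n-k-1}$ in the denominator of $f_\ell(t)$ cancels against part of $(1+t)^{n-1}$, leaving $(1+t)^{k} / (1-(n-k-1)t)$. Expanding the geometric series and applying the binomial theorem gives
\[
\alpha = [t^k] \frac{(1+t)^{k}}{1-(n-k-1)t} = \sum_{j=0}^{k} \binom{k}{j}(n-k-1)^{k-j} = (n-k)^{k},
\]
and the analogous computation with $(1+t)^{n-2}$ in place of $(1+t)^{n-1}$ gives $\beta = (n-k)^{k-1}$. Since $d = n-1$, the repair identity $\alpha = (d-k+1)\beta = (n-k)\beta$ is then immediate.

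For $M = k\alpha$, the plan is to write $M = M_1 - M_0$ as in Table~\ref{T4}, with $M_1 = \sum_i a_i \binom{n}{v-i}(v-i-1)$ the total data across the constituent layered codes and $M_0 = \sum_i a_i \binom{n-k}{v-i}(v-i-1)$ the data in layers disjoint from the access set (column $|A \cap L| = 0$ of Table~\ref{T3}, the unrecoverable portion). Applying the identity $(u-1)\binom{m}{u} = m\binom{m-1}{u-1} - \binom{m}{u}$ with $m = n$ and $m = n-k$ converts $M_1$ and $M_0$ into combinations of coefficients that can be computed by the same geometric-series argument as above: one finds $[t^v] f_\ell(t)(1+t)^n = (n-k)^{k+1}$ and $[t^v] f_\ell(t)(1+t)^{n-k} = (n-k)(n-k-1)^{k}$, together with the auxiliary value $[t^{v-1}] f_\ell(t)(1+t)^{n-k-1} = (n-k-1)^{k}$. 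The main bookkeeping obstacle I expect is that the finite sums defining $M_1$ and $M_0$ run over $0 \leq i \leq v-1$ while the corresponding generating-function sums extend up to $i = v$; the boundary term $a_v$ arises in both calculations and cancels upon forming $M_1 - M_0$. After tracking this cancellation one obtains $M = n\alpha - (n-k)^{k+1} = k(n-k)^k = k\alpha$, which completes the proof of both MSR conditions.
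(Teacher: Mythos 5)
The paper states this corollary without proof; the only support it offers is the numerical verification for $(n,k,d)=(8,4,7)$ in Table~\ref{T4}. Your generating-function derivation is correct and fills that gap. The per-copy parameters $\alpha_u=\binom{n-1}{u-1}$, $\beta_u=\binom{n-2}{u-2}$, $M_1^{(u)}=(u-1)\binom{n}{u}$, $M_0^{(u)}=(u-1)\binom{n-k}{u}$ all match the rows of Table~\ref{T4}; the Cauchy-product reading of the sums is valid because for $\alpha$ and $\beta$ the extraneous terms with $i\geq v$ vanish automatically (negative lower index in the binomials), while for $M_1$ and $M_0$ the lone surviving term $a_v\cdot(-1)\binom{\cdot}{0}$ appears identically in both and cancels in $M_1-M_0$, exactly as you note. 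The three coefficient evaluations check out, and the two cross terms $-(n-k)[t^{v-1}]f_\ell(1+t)^{n-k-1}$ and $+[t^v]f_\ell(1+t)^{n-k}$ cancel since both equal $\pm(n-k)(n-k-1)^k$, leaving $M=n\alpha-(n-k)^{k+1}=k(n-k)^k=k\alpha$. It would tighten the write-up to state that cancellation explicitly rather than leaving it to "one finds," but the argument is sound and complete.
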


\begin{table}[h]
\begin{tabular}{c}
$J(5,2) \qquad \begin{array}{ccccccccccccc} 
\toprule
{\#}  &   &r   & &\text{code} & &\text{type} \\ 
\midrule
4 &   &0  & &JGC(5,2,1,0) & &[10,4] \\
6 &   &1  & &JGC(5,2,1,1) & &[10,10] \\
\midrule
\end{array}$ \\ 
$J(6,3)  \qquad \begin{array}{ccccccccccccccc} 
4 & &0  & &JGC(6,3,2,0) & &[20,4]   \\
12 & &1  & &JGC(6,3,2,1) & &[20,16]   \\
4 & &2  & &JGC(6,3,2,2) & &[20,20]  \\
\midrule
\end{array}$   \\ 
$J(7,4) \qquad \begin{array}{ccccccccccccccc} 
4 & &0  & &JGC(7,4,3,0) & &[35,4]   \\
18 & &1  & &JGC(7,4,3,1) & &[35,22]   \\
12 & &2  & &JGC(7,4,3,2) & &[35,34]   \\
1 & &3  & &JGC(7,4,3,3) & &[35,35]   \\
\bottomrule 
\end{array}$\\ \noalign{\bigskip}
\end{tabular} 
\caption{Johnson graph codes on subgraphs of $J(8,5)$} \label{T9}
\end{table}

The data transfer in Example \ref{ex:dt847} is from sublayers of size $w=3$ in Round 1, size $w=2$ in Round 2, and size $w=1$ in Round 3. In each round it is possible to use sublayers of a smaller size as source for the transfer. Using the short notation $3-2-1$ for the given choices, the alternative choices are $3-1-1$, $2-2-1$ or $1-1-1.$ In each case, the data transfer follows Example \ref{ex:dt847} but with a different choice of Johnson graph codes. The 
different Johnson graph codes affect the parameters of the overall concatenated code. Thus the choice of the sources for the data transfer is an important aspect when designing concatenated layered codes.

Table \ref{T9} lists different Johnson graph codes on subgraphs of $J(8,5)$ and Table \ref{T10} lists how many helper symbols are available to an under-accessed layer in $J(8,5)$ for a given choice of Johnson graph code. 
Table \ref{T11} includes the choice of Johnson graph codes and their multiplicities for the choices $3-2-1$, $3-1-1$, $2-2-1$ and $1-1-1.$ Use of a Johnson graph code of type $[n,k]$ requires a layer $L_w$ to store $n-k$ symbols. Table \ref{T11} includes for each of the choices $w_1 - w_2 - w_3$ a column with the number of copies that layered codes of sizes $w_1, w_2, w_3$ contribute to the overall concatenated layered code. From these multiplicities and the parameters of the individual layered codes it is straightforward to compute the parameters of the concatenated layered code, as in Table \ref{T4}. We include details for the case $1-1-1.$

\begin{table}[h]
$\begin{array}{rccccccccccccccccccccccccccc} \\ \toprule  \noalign{\smallskip}
 &\quad  &\#L_w &|s| &\quad  &S_1(A) &S_2(A) &S_3(A)  \\
 \noalign{\smallskip}
 \midrule 
 \noalign{\smallskip}
JGC(5,2,1,0) & &4 &6 & &24 &0 &0  \\ \midrule
JGC(6,3,2,0) & &6 &16 & &72 &24 &0   \\
JGC(6,3,2,1) & &6 &4 & &0 &24 &0   \\ \midrule
JGC(7,4,3,0) & &4 &31 & &72 &48 &4  \\
JGC(7,4,3,1) & &4 &13 & &0 &48 &4  \\
JGC(7,4,3,2) & &4 &1 & &0 &0 &4 \\
\noalign{\smallskip} \midrule 
\noalign{\smallskip} 
\Sigma = & & & & &24 &48 &12 \\
\noalign{\smallskip} \bottomrule 
 \end{array}$
\medskip
\caption{Transfer by $\# L_w$ fully-accessible sublayers $L_w$ of $|s|$ helper symbols to shells $S_i(A)$.  Required total number of helper symbols
for each shell $S_i(A)$ in the last row.}  \label{T10}
\end{table}

\begin{table}[h]
$\begin{array}{rccccccccccccccccccccccccccc} \\ \toprule  \noalign{\smallskip}
 & &\multicolumn{1}{c}{3-2-1} & &\multicolumn{1}{c}{3-1-1} 
 & &\multicolumn{1}{c}{2-2-1} & &\multicolumn{1}{c}{1-1-1} \\ 
 \noalign{\smallskip}
 \midrule 
 \noalign{\smallskip}
JGC(5,2,1,0) & &1  & &1 & &-   & &- \\ \midrule
JGC(6,3,2,0) & &-  & &-  &  &1/3  & &-  \\
JGC(6,3,2,1) & &2  & &-  &  &5/3 & &-  \\ \midrule
JGC(7,4,3,0) & &- & &- &  &-  &  &1/3 \\
JGC(7,4,3,1) & &- & &1  &  &-   & &2/3   \\
JGC(7,4,3,2) & &3  & &2 & &3  & &2   \\
\noalign{\smallskip} \midrule 
 \end{array}$
\medskip
\caption{Data recovery scenarios $w_1-w_2-w_3$ with multiplicities that meet the required sum in Table \ref{T10}}  \label{T11}
\end{table}

\begin{example}
(The case $1-1-1$) In this case, all data is transferred from layers of size $v=1$. A layer of size $v=1$ stands for a single node. This represents a degenerate case outside the range $2 \leq v \leq n$. Data stored in a layer of size $v=1$ is simply helper data added to a node for the purpose of being available when a layer containing that node is under-accessed. 
Let $A = \{ 0,1,2,3 \}$ and $L_w = \{ 3 \}$. Table \ref{T9} gives the partition of the subgraph $J(7,4)$ (of layers $L \in J(8,5)$ that contain $L_w$) into
shells. Data is recovered sequentially in three rounds for layers in the shells $r=1,2,3$. The transfer of data from $L_w$ to under-accessed layers $L$ in shell $r$ uses a Johnson graph code $J(7,4,3,r)$, for $r=1,2,3$. The three codes are of type $[35,4],$ $[35,22]$, and $35,34],$ respectively. We rescale the multiplicities $1/3, 2/3, 2$ in Table \ref{T11} to integers $1, 2, 6$ (and rescale at the same time from $4$ to $12$ the data that is stored in a layer of size $5$).  
The parities to be stored in layer $L_w = \{ 3 \}$ (i.e., at the single node 3) are one vector $H_1 c_1$ of length $31$, two vectors $H_2 c_2$ of length $13$, and six vectors $H_3 c_3$ of length $1$. A total of $63$ symbols. 
The layered code of type $(8,7,7)$ has parameters $M = { 8 \choose 5 } \cdot 4, \alpha = {7 \choose 4}, \beta = {6 \choose 3}.$ After the rescaling by a factor $3$ and the addition of $63$ additional symbols to each node we obtain a concatenated layered code of type $(8,4,7)$ with
$M= 672, \alpha = 168, \beta = 60.$ 
\end{example}

\begin{table}[h]
$\begin{array}{rccccccccccccccccccccccccccc} \\ \toprule  \noalign{\smallskip}
 & &5 &4 &3 &2 &1 &\quad &M &\alpha &\beta \\
 \noalign{\smallskip}
 \midrule 
 \noalign{\smallskip}
3-2-1 & &1 &- &6 &8 &39 & &1024 &256  &64 \\ 
3-1-1 & &1 &- &6 &- &51 & &848 &212  &56 \\ 
2-2-1 & &3 &- &- &36 &9 & &1464 &366  &96 \\ 
1-1-1 & &3 &- &- &- &63 & &672 &168  &60 \\ 
\noalign{\smallskip} \midrule 
 \end{array}$
\medskip
\caption{Four different concatenated codes}  \label{T12}
\end{table}


\appendix


\section{Hamming graph codes} \label{S:HG}

Johnson graph codes, defined in Section \ref{S:Jdefn}, are codes of length ${n \choose v}$ with ${n \choose k}$ information sets of special form. For $v=1$ the codes are MDS. For $v>1$, the codes are in general not MDS. But the existence of many information sets of special form makes them suitable for applications in distributed storage. In this section we consider an analogous family of codes on Hamming graphs. Hamming graph codes are codes of length ${n^m}$ with ${n \choose k}$ information sets of special form. For $m=1$ the codes are MDS. For $m>1$, the codes are in general not MDS. We summarize the properties for Johnson graph codes and formulate the analogous properties for Hamming graph codes.

\medskip

The Johnson graph $J(n,v)$ has as vertices all $v$-subsets of $E = \{ 0,1,\ldots,n-1\}$. For a subset $A \subset E$ and for $r \geq 0$, let 
\[
B_r(A) = \{ L \in J(n,v) : |L \backslash A| \leq r ~\text{or}~ |A \backslash L| \leq r \}.
\]

(Definition \ref{d1})
A Johnson graph code $JGC(n, v, k, r)$ on the vertices of the Johnson graph $J(n, v)$ is a code of length $N = \binom{n}{v}$ and dimension 
$K = \lvert B_r( A ) \rvert$ such that, for any $k$-subset $A$ of $\{0,1,\dots, n-1\}$, $B_r(A)$ forms an information set. 

\medskip

Section \ref{S:Jdefn} further gives the following duality result.

\medskip

 (Proposition \ref{p:dualjgc})
The dual code of a Johnson graph code $JGC(n,v, k, r)$ is a code $JGC(n,v,n-k,D-r-1)$.

\medskip

Replacing the role of a Johnson graph with that of a Hamming graph we obtain a class of codes that includes binary Reed-Muller codes.
The Hamming graph $H(m,n)$ has as vertices all ordered $m$-tuples over an alphabet $I$ of size $n$. Two vertices in $H(m,n)$ are adjacent if their $m$-tuples differ in precisely one coordinate. 
For a subset $A \subset I$ and for $r \geq 0$, let 
\[
B_r(A) = \{ L \in H(m,n) : |\{ 0 \leq i \leq m-1 : L_i \not \in A \}| \leq r \}.
\]
Note that for $A$ of size $k$ and for $r=0$, $B_0(A)$ is the subset $H(m,k) \subset H(m,n)$ of all ordered $m$-tuples over $A$. 

\medskip

\begin{definition} \label{d2}
A Hamming graph code $HGC(m,n,k,r)$ on the vertices of the Hamming graph $H(m,n)$ is a code of length $N = n^m$ and dimension 
$K = \lvert B_r( A ) \rvert$ such that, for any $k$-subset $A$ of $\{0,1,\dots, n-1\}$, $B_r(A)$ forms an information set. 
\end{definition}

\begin{proposition} \label{p:dualhgc}
The dual of a Hamming graph code $HGC(m,n, k, r)$ is a code $HGC(m,n,$ $n-k,m-r-1)$.
\end{proposition}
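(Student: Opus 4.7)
The plan is to mirror the argument for Proposition \ref{p:dualjgc}: establish a set-theoretic complementation identity
\[
 B_r(A)^c \;=\; B_{m-r-1}(A^c) \qquad \text{in } H(m,n),
\]
and then invoke the standard fact that the complement of an information set for a linear code is an information set for its dual.

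First I would prove the complementation identity directly from Definition \ref{d2}. For any vertex $L = (L_0,\ldots,L_{m-1}) \in H(m,n)$, the coordinate positions partition as $\{i : L_i \in A\} \sqcup \{i : L_i \in A^c\}$, whose sizes add to $m$. Writing $w_A(L) = |\{i : L_i \notin A\}|$, membership in $B_r(A)$ reads $w_A(L) \leq r$, so
\[
L \in B_r(A)^c \iff w_A(L) \geq r+1 \iff w_{A^c}(L) \leq m - r - 1 \iff L \in B_{m-r-1}(A^c).
\]
This is the Hamming-graph analogue of Lemma \ref{L1}, with the difference that here $R$ is simply $m$, reflecting the fact that the Hamming metric on $H(m,n)$ has diameter $m$.

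Next I would check that the parameters match. The complementation identity forces
\[
|B_{m-r-1}(A^c)| = n^m - |B_r(A)| = N - K,
\]
which is exactly the required dimension for a code $HGC(m,n,n-k,m-r-1)$. Its length is trivially still $n^m$. Finally, given any $(n-k)$-subset $A'$, let $A = (A')^c$, which is a $k$-subset. Since $HGC(m,n,k,r)$ has $B_r(A)$ as an information set, its coordinate complement $B_r(A)^c = B_{m-r-1}(A')$ is an information set for the dual code. As $A'$ ranges over all $(n-k)$-subsets, this verifies the defining property of $HGC(m,n,n-k,m-r-1)$.

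There is no real obstacle; the only thing to be careful about is the bookkeeping with ``$r$ vs $m-r-1$''. The cleanest way is to phrase everything in terms of the quantity $w_A(L)$ so that the complementation identity $w_A(L) + w_{A^c}(L) = m$ makes the duality transparent, exactly paralleling the role of $R = \min(d_1,d_2)$ in the Johnson graph setting (here $R$ is uniformly $m$ because the analogue of the size constraint on layers is absent).
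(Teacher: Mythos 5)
Your proof is correct and takes essentially the same route as the paper: the paper's one-line proof simply cites the complementation identity $B_r(A)^c = B_{m-r-1}(A^c)$ and the analogy with Proposition \ref{p:dualjgc}, which you've spelled out in full. Your phrasing via $w_A(L) + w_{A^c}(L) = m$ is a clean way to make the identity transparent, and the parameter bookkeeping and the passage from $A'$ to $A = (A')^c$ are exactly the steps the paper leaves implicit.
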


\begin{proof}
The proof follows that of Proposition \ref{p:dualjgc} and uses that $B_r(A)^c = B_{m-r-1}(A^c).$
\end{proof}

\begin{example} (Reed-Muller codes)
Binary Reed-Muller codes $RM(r,m)$ are Hamming graph codes $HGC(m,2,1,r)$, with $n=2$ and $k=1.$
Codewords for the binary Reed-Muller codes $RM(r,m)$ are defined by evaluation of squarefree $m$-variable polynomials of degree at most $r$ in all binary $m$-tuples $\{0,1\}^m$.
The dimension $K = {m \choose 0}+{m \choose 1}+\cdots+{m \choose r}$ of the code $RM(r,m)$ is the number of squarefree $m$-variable monomials of degree at most $r$. 
Clearly $K$ equals the size of a neighborhood $B_r(a)$ of radius $r$ around a vertex $a \in H(m,2)$. Moreover, 
coordinates in the positions $B_r(a)$ form an information set for the  Reed-Muller code $RM(r,m)$. Note that our definition of Hamming graph code 
only requires that $B_r(a)$ is an information set for $a$ the allzero or the allone vector.
The dual of the code $RM(r,m)$ is the code $RM(m-1-r,m).$ 
\end{example}

The construction of Johnson graph codes in Section \ref{S:Jcons} is algebraic.

\medskip

(Definition \ref{def:code}) For given $n$ and $v$, let $C_0$ be a code of length $n$ and let $t \geq 0$. Define a code $C$ of length $N = {n \choose v}$ as the span of the vectors $\pi(M)$ for all $v \times n$ matrices $M$ with at least $t$ rows in $C_0$.

\medskip

The vector $\pi(M)$ is defined as $( \det( M_L ) : L \in J(n,v) ),$
 where $M_L$ is the full minor of $M$ with columns in the $v$-subset $L \subset \{ 0, 1, \ldots, n-1 \}$.

\medskip

Let $L = (L_0, L_1, \ldots, L_{m-1}) \in H(m,n)$ be a $m$-tuple over the alphabet $\{ 0,1,\ldots,n-1 \}$. Given a $m \times n$ matrix $M$ over a field $F$, define the vector $\tau(M)$ as
\[
\tau ( M ) = ( \prod_{i=0}^{m-1} M_{i,L_i} : L \in H(m,n) ).
\]
The vector $\pi(M)$ gives coordinates for the exterior product of rows in a matrix $M$, and the vector $\tau(M)$ gives coordinates for the tensor product of rows in a matrix $M$

\begin{definition} (Codes on Hamming graphs) \label{def:coders} For given $n$ and $m$, let $C_0$ be a code of length $n$ and let $t \geq 0$. Define a code $C$ of length $N=n^m$ as the span of the vectors $\tau(M)$ for all $m \times n$ matrices $M$ with at least $t$ rows in $C_0$.
\end{definition}

In Section \ref{S:Jcons}, Definition \ref{def:code} is followed by three lemmas. All three lemmas have analogs for the codes in Definition \ref{def:coders}.

\medskip

(Lemma \ref{L:jgc1})
The code $C$ has dimension $K =  \lvert B_r(I_0) \rvert,$
where $r = \min ( v, \dim C_0) -t$ and $I_0 \subset \{ 0, 1, \ldots, n-1 \}$ is of size $\dim C_0$.

\medskip

(Lemma \ref{L:jgc2})
For $C$ as in Definition \ref{def:code}, and for an information set $A_0 \subset  \{ 0, 1, \ldots, n-1 \}$ for $C_0$, the set
\[
 \{ \; L \in J(n,v) ~|~ \lvert L \cap A_0 \rvert \geq t \; \} = B_r(A_0)
\]
is an information set for $C$.

\medskip

(Lemma \ref{L:jgc3}) For $L \in S_r(A_0),$ the constructed codewords $\pi(M)$ in the proof of Lemma~\ref{L:jgc2} have as special property that the coordinate
$\pi(M_L)$ is the unique nonzero coordinate in the positions $B_r(A_0)$. Moreover the weight of $\pi(M)$ is at most 
 \[
{ 2t+n-\dim C_0-v \choose t }. 
\]

\medskip

For codes on Hamming graphs (Definition \ref{def:coders}) we have the following three lemmas.

\begin{lemma} \label{L:dim}  (In analogy with Lemma \ref{L:jgc1})
The code $C$ has dimension $K =  \lvert B_r((I_0)^m) \rvert,$
where $r = m-t$ and $I_0 \subset \{ 0, 1, \ldots, n-1 \}$ is of size $\dim C_0$.
\end{lemma}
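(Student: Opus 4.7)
The plan is to imitate the proof of Lemma \ref{L:jgc1}, replacing the exterior product $\pi(M)$ with the tensor product $\tau(M)$, and exploiting the multilinearity of $\tau$ in the rows of $M$.

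First, I would choose a basis $g = \{g_0, g_1, \ldots, g_{n-1}\}$ for $F^n$ such that $\{g_i : i \in I_0\}$ is a basis for $C_0$. Observe that for any $L = (L_0, L_1, \ldots, L_{m-1}) \in H(m,n)$, the matrix $M^{(L)}$ with rows $g_{L_0}, g_{L_1}, \ldots, g_{L_{m-1}}$ yields $\tau(M^{(L)})$, and the collection $\{\tau(M^{(L)}) : L \in H(m,n)\}$ is the image of the standard tensor-product basis of $(F^n)^{\otimes m}$ under the identification with $F^{n^m}$. Since the $g_i$ form a basis, so do these $n^m$ tensor-product vectors, hence any subset of them is linearly independent.

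Next I would use multilinearity. Given an $m \times n$ matrix $M$ whose rows indexed by $J \subset \{0,\ldots,m-1\}$ with $|J| \geq t$ lie in $C_0$, expand each row $M_i$ in terms of the basis: $M_i = \sum_{j \in I_0} c_{i,j}\, g_j$ for $i \in J$ and $M_i = \sum_{j=0}^{n-1} c_{i,j}\, g_j$ otherwise. By multilinearity,
\[
\tau(M) = \sum_{(j_0,\ldots,j_{m-1})} \Bigl(\prod_{i=0}^{m-1} c_{i,j_i}\Bigr)\, \tau(M^{(j_0,\ldots,j_{m-1})}),
\]
where the sum is over tuples with $j_i \in I_0$ for every $i \in J$. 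In particular every generator of $C$ lies in the span of those $\tau(M^{(L)})$ with at least $t$ coordinates of $L$ in $I_0$. Conversely, any such $L$ gives a matrix $M^{(L)}$ with at least $t$ rows in $C_0$, so $\tau(M^{(L)}) \in C$.

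Combining the two observations, $C$ is spanned by the linearly independent set $\{\tau(M^{(L)}) : |\{i : L_i \in I_0\}| \geq t\}$, so
\[
\dim C = \#\{L \in H(m,n) : |\{i : L_i \notin I_0\}| \leq m-t\}.
\]
To finish I would identify this count with $|B_r((I_0)^m)|$: a vertex $L$ lies in the union $\bigcup_{L_0 \in (I_0)^m} B_r(L_0)$ iff $L$ agrees with some $L_0 \in (I_0)^m$ in at least $m-r$ coordinates, which is the same as having at most $r = m-t$ coordinates outside $I_0$. This is exactly the condition in the appendix's definition of $B_r(I_0)$, completing the identification. The only subtlety, and the main place to be careful, is the multilinear expansion and the bookkeeping that at least $t$ of the basis indices must lie in $I_0$; the rest is formal.
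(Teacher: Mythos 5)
Your proof is correct and takes the same route as the paper's: fix a basis $g$ of $F^n$ adapted to $C_0$, observe that the vectors $\tau(M^{(L)})$ form the standard tensor basis, and identify $C$ with the span of those $\tau(M^{(L)})$ having at least $t$ indices in $I_0$. The paper states this basis claim without elaboration; your multilinear expansion of $\tau$ and the two-sided containment argument are exactly the details being suppressed, and the count at the end matches the definition of $B_r$.
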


\begin{proof} Let $g = \{ g_0, g_1, \ldots, g_{n-1} \}$ be a basis for $F^n$ and let
$\{ g_i : i \in I_0 \}$ be a basis for $C_0$.
The vectors $\tau(M)$, for matrices $M$ with rows $( g_i : i \in I )$ such that $d(I,(I_0)^m) \leq m-t$, form a basis for $C$. 
Thus $\dim C = \lvert B_r((I_0)^m) \rvert$, for $r = m-t.$ 
\end{proof} 

\begin{lemma} \label{L:15h} (In analogy with Lemma \ref{L:jgc2})
For $C$ as in the definition and for an information set $A_0 \subset  \{ 0, 1, \ldots, n-1 \}$ for $C_0$, the set
$B_r((A_0)^m)$ is an information set for $C$, for $r = m-t.$
\end{lemma}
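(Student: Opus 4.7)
Proof proposal for Lemma~\ref{L:15h}.

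The plan is to mimic the argument of Lemma~\ref{L:jgc2}, replacing exterior products with tensor products. By Lemma~\ref{L:dim}, $\dim C = \lvert B_r((I_0)^m)\rvert$, and since the size of the neighborhood $B_r((A_0)^m)$ depends only on $\lvert A_0 \rvert = \dim C_0$, we have $\lvert B_r((A_0)^m) \rvert = \dim C$. It therefore suffices to show that the coordinates of $C$ in the positions $B_r((A_0)^m)$ are linearly independent. As before, I will do this by exhibiting, for each $L \in B_r((A_0)^m)$, a codeword $\tau(M^{(L)})\in C$ whose restriction to $B_r((A_0)^m)$ has a unique nonzero entry at the position $L$ within an appropriate shell ordering.

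For $L=(L_0,\dots,L_{m-1})\in B_r((A_0)^m)$, partition the coordinates into $I_A = \{i : L_i \in A_0\}$ and $I_{A^c} = \{i : L_i \notin A_0\}$, and set $s = |I_{A^c}| \leq r = m-t$. Build the $m\times n$ matrix $M=M^{(L)}$ row by row: for $i \in I_A$, take as row~$i$ the unique codeword of $C_0$ that equals $1$ at the coordinate $L_i$ and $0$ on $A_0\setminus\{L_i\}$ (such a codeword exists because $A_0$ is an information set for $C_0$); for $i \in I_{A^c}$, take as row~$i$ the unit vector $e_{L_i}$. Since $|I_A| = m-s \geq m-r = t$, the matrix $M$ has at least $t$ rows in $C_0$, so $\tau(M) \in C$ by Definition~\ref{def:coders}. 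By construction $M_{i,L_i}=1$ for all $i$, so $\tau(M)_L = \prod_i M_{i,L_i} = 1$.

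Next, I analyze when $\tau(M)_{L'} = \prod_i M_{i,L'_i}$ can be nonzero for some other $L'$. For $i\in I_{A^c}$, row~$i$ is the unit vector $e_{L_i}$, forcing $L'_i = L_i$. For $i \in I_A$, row~$i$ is a codeword of $C_0$ whose support on $A_0$ is exactly $\{L_i\}$, so the factor $M_{i,L'_i}$ vanishes unless either $L'_i = L_i$ or $L'_i \notin A_0$. Consequently, if $\tau(M)_{L'} \neq 0$ then
\[
\lvert \{i : L'_i \notin A_0 \}\rvert \;=\; s + \lvert\{i \in I_A : L'_i \notin A_0 \}\rvert \;\geq\; s,
\]
with equality forcing $L'_i \in A_0$ for all $i\in I_A$, which combined with the previous observation gives $L'_i = L_i$ for all $i\in I_A$, hence $L' = L$.

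Let $S_s((A_0)^m)$ denote the shell of tuples $L'$ with $|\{i : L'_i \notin A_0\}| = s$, so that $B_r((A_0)^m)$ is the disjoint union $S_0 \cup S_1 \cup \cdots \cup S_r$. The preceding paragraph shows that, for $L \in S_s((A_0)^m)$, the vector $\tau(M^{(L)})$ restricted to $B_r((A_0)^m)$ vanishes on $S_0 \cup \cdots \cup S_s$ except at the position $L$. Listing the codewords $\tau(M^{(L)})$ in the order $L \in S_0, S_1, \ldots, S_r$ (with any lexicographic tiebreak inside each shell) produces a triangular matrix with $1$'s on the diagonal in the columns $B_r((A_0)^m)$, proving the independence of these coordinates and hence the lemma. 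The only conceptual step is recognizing that the separation-of-shells phenomenon used in Lemma~\ref{L:jgc2} transports intact to the Hamming setting; once the right matrix $M$ and shell ordering are fixed, no obstacle remains.
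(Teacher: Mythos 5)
Your proof is correct and follows essentially the same route as the paper: for each $L\in B_r((A_0)^m)$ you build the same matrix $M$ (codewords of $C_0$ singling out $L_i$ for $i$ with $L_i\in A_0$, unit vectors otherwise), and the same shell-ordering observation $\lvert\{i:L'_i\notin A_0\}\rvert\geq s$ with equality forcing $L'=L$ yields the triangular matrix. Your version is slightly more explicit than the paper's (you verify that $\lvert I_A\rvert\geq t$ so that $\tau(M)\in C$, a point the paper leaves implicit), but the key idea and decomposition are the same.
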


\begin{proof} The size of $B_r((A_0)^m)$ equals the dimension of $C$. It therefore suffices to prove the independence of the coordinates in~$B_r((A_0)^m)$.
For each $L = (j_1, j_2, \ldots, j_m ) \in S_r((A_0)^m)$, we construct a codeword $c = \tau(M)$ with $c_{L} = 1$ and with $c_{L'} = 0$ for $L' \in S_r((A_0)^m)
 \backslash L.$
 For $j_i \in A_0$ include as row $i$ for $M$ the unique
codeword in $C_0$ that is $1$ in $j_i$ and $0$ in $A_0 \backslash j_i.$ For $j_i  \not \in A_0$ include as row $i$ for $M$ the unit vector that is $1$ in $j_i$ and $0$ elsewhere. The resulting matrix $M$ has rows of the form 
\begin{align*}
( \begin{array}{c|c|c|c} 0 &e &x &y \end{array} ), ~~~\text{for $j_i \in A_0. $}\\
( \begin{array}{c|c|c|c} 0 &0 &e &0 \end{array} ), ~~~\text{for $j_i \not \in A_0.$} 
\end{align*}
The first two blocks of columns in $M$ correspond to $A_0 \subset \{ 0, 1 \ldots, n-1 \}$ and the second and third block to $\{ j_1, j_2, \ldots, j_m \} \subset \{ 0, 1, \ldots, n-1 \} $. The total size of these two blocks is at most $m$ and is less than $m$ when $L$ has repeated symbols. In that case $M$ has repeated rows. The vectors $e$ are unit vectors such that each vector is $1$ in position $j_i$.
Clearly, $\tau(M)_L = \prod_{j_i \in L} M_{i,j_i} = 1.$ Moreover $\tau(M)_{L'} = 0$ for $L' \neq L$ with $d(L',(A_0)^m) \leq
d(L,(A_0)^m).$ The constructed codewords $\tau(M)$, for $L \in S_r(A_0), \ldots, S_1(A_0), S_0(A_0)$, in that order, form an invertible triangular matrix in the positions $B_r((A_0)^m)$. And thus $B_r((A_0)^m)$ is an information set for $C$.  
\end{proof}

\begin{lemma} (In analogy with Lemma \ref{L:jgc3})
For $L \in S_r(A_0),$ the constructed codewords $\tau(M)$ in the proof of Lemma~\ref{L:15h} have as special property that the coordinate
$\tau(M_L)$ is the unique nonzero coordinate in the positions $B_r(A_0)$. Moreover the weight of $\tau(M)$ is at most $(n-k+1)^t$.
\end{lemma}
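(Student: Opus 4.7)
The plan is to mimic the proof of Lemma~\ref{L:jgc3}, reading both claims directly off the explicit shape of the matrix~$M$ constructed in the proof of Lemma~\ref{L:15h}. Since $\tau(M)_{L'} = \prod_{i=0}^{m-1} M_{i,L'_i}$ for every $L' \in H(m,n)$, the support of $\tau(M)$ is controlled row by row by the supports of the rows of~$M$, with $k = \dim C_0 = |A_0|$.

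For the uniqueness claim, I would note it is essentially already proved inside Lemma~\ref{L:15h}. Every $L' \in B_r((A_0)^m)$ satisfies $d(L',(A_0)^m) \leq r = d(L,(A_0)^m)$, and the triangularity observation in that proof forces $\tau(M)_{L'} = 0$ unless $L' = L$. Combined with $\tau(M)_L = 1$, this gives $\tau(M)_L$ as the unique nonzero coordinate of $\tau(M)$ inside $B_r((A_0)^m)$.

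For the weight bound I would split the rows of~$M$ according to whether $j_i \in A_0$. The $r$ rows with $j_i \notin A_0$ are unit vectors supported at $j_i$, each contributing a single valid choice $L'_i = j_i$. The remaining $t = m - r$ rows with $j_i \in A_0$ are codewords of $C_0$ that equal $1$ at $j_i$ and $0$ on $A_0 \setminus \{j_i\}$, so their supports lie in $\{j_i\} \cup A_0^c$, a set of size $n - k + 1$. Multiplying the per-coordinate counts yields at most $(n-k+1)^t \cdot 1^r = (n-k+1)^t$ tuples $L'$ for which $\tau(M)_{L'} \neq 0$.

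The main obstacle is really only bookkeeping: when $L$ has repeated entries the matrix~$M$ has repeated rows, but this neither breaks the product formula nor inflates the per-row support bounds, so the weight estimate goes through unchanged.
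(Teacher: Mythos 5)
Your proof is correct and takes essentially the same approach as the paper: the paper's proof is the terse observation that the weight of $\tau(M)$ is the product of the row weights of $M$, with $t$ rows of weight at most $n-k+1$ (the $C_0$-codewords in systematic form relative to $A_0$) and the remaining $r$ rows of weight one. Your filling in of the explicit support sets $\{j_i\}\cup A_0^c$, the pointer to Lemma~\ref{L:15h} for the uniqueness claim, and the remark about repeated rows are all accurate bookkeeping around the same argument.
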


\begin{proof} The weight of $\tau(M)$ is the product of the weights of the rows in $M$. The matrix $M$ has $t$ rows of weight at most $n-k+1$. The remaining rows are of weight one.
\end{proof}

The binary Reed-Muller code $RM(r,m)$ has words of minimum weight $2^{m-r}$. The code is defined with $n=2$, $k=1$, and $t = m-r$.

\begin{table}[h]
{\small
\[
 \begin{array}{cc cccl | rccccccl | rccc}
00 &~~ &1&\cdot&\cdot&\cdot~  &~1&1&\cdot&\cdot &1&\cdot&1&\cdot~ &~1&1&1&1 \\
01 &~~ &\cdot&1&\cdot&\cdot  &0&1&\cdot&\cdot &\cdot&1&\cdot&1 &0&1&0&1 \\
10 &~~ &\cdot&\cdot&1&\cdot  &\cdot&\cdot&1&1 &0&\cdot&1&\cdot &0&0&1&1 \\
11 &~~ &\cdot&\cdot&\cdot&1  &\cdot&\cdot&0&1 &\cdot&0&\cdot&1 &0&0&0&1 \\ \cmidrule{3-18}
02 &~~ &\cdot&\cdot&\cdot&\cdot  &1&\cdot&\cdot&\cdot &\cdot&\cdot&\cdot&\cdot &1&\cdot&1&\cdot \\
03 &~~ &\cdot&\cdot&\cdot&\cdot  &\cdot&1&\cdot&\cdot &\cdot&\cdot&\cdot&\cdot &\cdot&1&\cdot&1 \\
12 &~~ &\cdot&\cdot&\cdot&\cdot  &\cdot&\cdot&1&\cdot &\cdot&\cdot&\cdot&\cdot &0&\cdot&1&\cdot \\
13 &~~ &\cdot&\cdot&\cdot&\cdot  &\cdot&\cdot&\cdot&1 &\cdot&\cdot&\cdot&\cdot &\cdot&0&\cdot&1 \\ 
20 &~~ &\cdot&\cdot&\cdot&\cdot  &\cdot&\cdot&\cdot&\cdot &1&\cdot&\cdot&\cdot &1&1&\cdot&\cdot \\
21 &~~ &\cdot&\cdot&\cdot&\cdot  &\cdot&\cdot&\cdot&\cdot &\cdot&1&\cdot&\cdot &0&1&\cdot&\cdot \\
30 &~~ &\cdot&\cdot&\cdot&\cdot  &\cdot&\cdot&\cdot&\cdot &\cdot&\cdot&1&\cdot &\cdot&\cdot&1&1 \\
31 &~~ &\cdot&\cdot&\cdot&\cdot  &\cdot&\cdot&\cdot&\cdot &\cdot&\cdot&\cdot&1 &\cdot&\cdot&0&1 \\ \cmidrule{3-18}
22 &~~ &\cdot&\cdot&\cdot&\cdot  &\cdot&\cdot&\cdot&\cdot &\cdot&\cdot&\cdot&\cdot &1&\cdot&\cdot&\cdot \\
23 &~~ &\cdot&\cdot&\cdot&\cdot  &\cdot&\cdot&\cdot&\cdot &\cdot&\cdot&\cdot&\cdot &\cdot&1&\cdot&\cdot \\
32 &~~ &\cdot&\cdot&\cdot&\cdot  &\cdot&\cdot&\cdot&\cdot &\cdot&\cdot&\cdot&\cdot &\cdot&\cdot&1&\cdot \\
33 &~~ &\cdot&\cdot&\cdot&\cdot  &\cdot&\cdot&\cdot&\cdot &\cdot&\cdot&\cdot&\cdot &\cdot&\cdot&\cdot&1  
\end{array}
\]
}
\caption{Matrix with row and column parition $S_0 | S_1 | S_2$.} \label{T:ham16}
\end{table}

\begin{example}
For the Hamming graph $H(m=2, n=4)$, let 
\[
g = \left( \begin{array}{c}
g_0 \\ g_1 \\ g_2 \\ g_3 \end{array} \right) = \left( \begin{array}{ccccc}
1 &0 &1 &1 \\ 0 &1 &0 &1 \\ 0 &0 &1 &0 \\  0 &0 &0 &1 \end{array} \right).
\]

Let $k=2$, $I_0 = \{ 0,1 \}$, $C_0 = \text{span}(g_0, g_1)$. For $t=1$, $r=m-t=1$, and the parameters define a code $C$ of
type $HGC(2,4,2,1).$ 

The code $C$ is spanned by the first two blocks of rows in Table \ref{T:ham16}. Matrix entries $\cdot$ are $0$ independent of the choice of the code $C_0$. 
The block structure of the matrix follows the partition
\begin{align*}
S_0(I_0 \times I_0)  ~=~ &\{ 00, 01, 10, 11 \}, \\
S_1(I_0 \times I_0) ~=~ &\{ 02, 03, 12, 13, 20, 21, 30, 31 \}, \\  
S_2(I_0 \times I_0)  ~=~ &\{ 22, 23, 32, 33 \}.
\end{align*}
The matrix is a conjugate of $g \otimes g$ and differs from it by a permutation of rows and columns. 
For $A_0 = \{ 0 , 1 \}$, the layer $L = 21 \in S_1(A_0 \times A_0)$ defines a matrix
\[
M = \left( \begin{array}{cccc} 0 &0 &1 &0 \\ 0 &1 &0 &1 \end{array} \right)
\]
with codeword $\tau(M) = (0000 | 00000100 | 0100)$. It is nonzero
in the positions $21$ and $23$, with $L=21$ the unique nonzero position in $B_1(A_0 \times A_0).$ 
Among the six sets $B_1(A_0 \times A_0)$, for $A_0 \in \{ 01, 02, 03, 12, 13, 23 \}$, all except the one for $A_0 = 02$ are information sets. 
\end{example}

\begin{proposition} \label{P:CDrs} (In analogy with Proposition \ref{P:CD})
Let $D_0$ be the dual code of $C_0$. The dual code $D$ of $C$ is generated by vectors $\tau(M)$, for all $m \times n$ matrices $M$ with at least 
$m + 1 - t$ rows in $D_0$. 
\end{proposition}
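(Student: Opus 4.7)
The plan is to mimic the proof of Proposition \ref{P:CD}, replacing the Cauchy-Binet identity for $\pi(M)$ with the much simpler tensor-product identity for $\tau(M)$. Specifically, for any two $m \times n$ matrices $A$ and $B$, one has
\[
\langle \tau(A), \tau(B) \rangle = \sum_{L \in H(m,n)} \prod_{i=0}^{m-1} A_{i,L_i} B_{i,L_i} = \prod_{i=0}^{m-1} \Big( \sum_{j=0}^{n-1} A_{i,j} B_{i,j} \Big) = \prod_{i=0}^{m-1} \langle A_i, B_i \rangle,
\]
because the sum over $L \in H(m,n) = \{0,\ldots,n-1\}^m$ factors coordinatewise. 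I would record this product formula as a preliminary lemma (it is the additive, commutative analog of Cauchy-Binet, and is entirely routine to verify by expanding the product).

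Orthogonality then follows by pigeonhole. Assume $A$ has at least $t$ rows in $C_0$ and $B$ has at least $m+1-t$ rows in $D_0$. Since $t + (m+1-t) = m+1 > m$, there is an index $i$ such that simultaneously $A_i \in C_0$ and $B_i \in D_0$, so $\langle A_i, B_i \rangle = 0$ and the product above vanishes. This shows that the span of the $\tau(M)$ with $\geq m+1-t$ rows in $D_0$ lies in the dual code of $C$.

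It remains to show that the two spaces have complementary dimensions, so that the inclusion forces equality. Write $k_0 = \dim C_0$, so $\dim D_0 = n-k_0$. Applying Lemma \ref{L:dim} to $C$ gives $\dim C = |B_{m-t}((I_0)^m)|$ for a set $I_0$ of size $k_0$; applying it to the code generated by the claimed generators for $D$ (with $C_0$ replaced by $D_0$ and $t$ replaced by $m+1-t$) gives dimension $|B_{t-1}((I_0^c)^m)|$, where I use that an information set for $D_0$ may be chosen to be $I_0^c$. Now the same complementarity relation $B_r(A)^c = B_{m-r-1}(A^c)$ that was already used in Proposition \ref{p:dualhgc} gives $|B_{m-t}((I_0)^m)| + |B_{t-1}((I_0^c)^m)| = n^m$, which is exactly the length of the code, so the two dimensions add to $n^m$ and the inclusion into $D$ is an equality.

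I do not expect a serious obstacle: the product formula for $\langle \tau(A), \tau(B)\rangle$ replaces the heavier Cauchy-Binet argument and makes orthogonality immediate, while the dimension count reduces to the complementarity of Hamming neighborhoods already established. The only mild care needed is to justify choosing $I_0^c$ as an information set for $D_0$ (which is automatic: the complement of an information set of $C_0$ is an information set of $D_0$), and to remember to exclude the degenerate case where $A$ or $B$ has a zero row, which contributes the zero vector on both sides and can be absorbed silently.
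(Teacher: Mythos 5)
Your proof is correct and uses the same key step as the paper: factoring the inner product $\tau(A)\cdot\tau(B) = \prod_{i=0}^{m-1}\langle A_i, B_i\rangle$ coordinatewise and then applying pigeonhole ($t + (m+1-t) = m+1 > m$) to force one factor to vanish. The explicit dimension count you add (via Lemma~\ref{L:dim} and the complementarity $B_r(A)^c = B_{m-r-1}(A^c)$) is a necessary step that the paper's proof leaves implicit, so your write-up is actually slightly more complete than the paper's.
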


\begin{proof}
Generators $\tau(A)$ for $C$ and $\tau(B)$ for $D$ have inner product 
\begin{align*}
\tau(A) \cdot \tau(B) =  \sum_L \left( \prod_{i=0}^{v-1} A_{i,L_i} B_{i,L_i} \right) 
=  \prod_{i=0}^{v-1} \left( \sum_{j=0}^{n-1}  A_{i,j} B_{i,j} \right).
\end{align*}
At least one of the factors in the product is zero and the two generators are orthogonal.
\end{proof}

\section{Bezout equation and subresultants} \label{S:AppB}

For $p(x), q(x) \in F[x]$, let $d(x) = \gcd ( p(x),q(x) )$ and $\delta = \deg d(x).$ The Bezout equation asks for coefficients $a(x), b(x) \in F[x]$ such that
\[
a(x) p(x) + b(x) q(x) = d(x).
\]
For $\deg p(x) = v$ and $\deg q(x) = k$, let
\[
\phi : F[x]_{<k} \times F[x]_{<v} \longrightarrow F[x]_{<k+v}, \quad (a,b) \mapsto ap+bq. 
\]
The map is linear and is represented on a basis of monomials by the $(k+v) \times (k+v)$ Sylvester matrix $\Sigma(p,q)$ for $p(x)$ and $q(x)$.  The map is invertible if and only if $p(x)$ and $q(x)$ are relatively prime. 
\begin{equation} \label{eq:spq0}
\det \Sigma(p,q) = 0 ~\Leftrightarrow~ \delta >0. 
\end{equation}
For $0 \leq i \leq \min (v,k)$, the map 
\[
\phi_i : F[x]_{<k-i} \times F[x]_{<v-i} \longrightarrow F[x]_{<k+v-i}, \quad (a,b) \mapsto ap+bq,
\]
is represented by a $(k+v-2i) \times (k+v-i)$ matrix. Let $\Sigma_i(p,q)$ be the square submatrix that
corresponds to the projection of $ap+bq$ onto its $k+v-2i$ leading coefficients. It holds that
\begin{equation} \label{eq:spqi}
\det \Sigma_i(p,q) = 0 ~\text{if $\delta >i$,} \qquad \det \Sigma_i(p,q) \neq 0 ~\text{if $\delta = i$.} 
\end{equation}
This includes (\ref{eq:spq0}) as the special case $i=0$. We first restate (\ref{eq:spqi}) in terms of a size $k+v-i$ matrix
$\Phi_i(p,q)$ and will then generalize the first part of (\ref{eq:spqi}) in Lemma~\ref{L:genI}. 
The expression for $\det \Sigma_i(p,q)$ in the coefficients of $p(x)$ and $q(x)$ is known as the $i$-th principal subresultant of
$p(x)$ and $q(x)$. 

\begin{example} \label{ex:034b}
For $p(x) = \sum_i p_i x^{3-i},$ $q(x) = \sum_i q_i x^{3-i},$ $a(x) = \sum_i a_i x^{1-i}$, $b(x) = \sum_i a_i x^{1-i}$, 
the polynomial $d(x)  = a(x)p(x)+b(x)q(x) = \sum_i d_i x^{4-i}$ has leading coefficients  
\[
 \left( \begin{array}{cccc}d_0 &d_1 &d_2 &d_3 \end{array} \right) = \left( \begin{array}{cc|cc}a_0 &a_1 &b_0 &b_1 \end{array} \right) 
 \left( \begin{array}{cccc} p_0 &p_1 &p_2 &p_3 \\ 0 &p_0 &p_1 &p_2 \\ \midrule q_0 &q_1 &q_2 &q_3 \\ 0 &q_0 &q_1 &q_2 \end{array} \right).
\]
The determinant of the $4 \times 4$ matrix gives the first principal subresultant of $p(x)$ and $q(x).$
\end{example}

Let $F[x]_{<k+v-i} = V_0 \oplus V_1$, for
\[
V_0 = \langle x^j : 0 \leq j < i \rangle,  
\quad V_1 = \langle x^j : i \leq j < k+v-i \rangle.
\]
Then $\Sigma_i(p,q)$ represents the composite function $\sigma_i = \pi_1 \circ \phi_i : F[x]_{<k-i} \times F[x]_{<v-i}  \longrightarrow  V_1.$ 
An alternative to projection on the leading $k+v-2i$ coefficients of $ap+bq$ is to mask the trailing $i$ coefficients. This is achieved by extending $\phi_i$ to a map 
\[
\phi'_i : F[x]_{<i} \times F[x]_{<k-i} \times F[x]_{<v-i} \longrightarrow F[x]_{<k+v-i}, \quad (a_0,a,b) \mapsto a_0+ap+bq.
\]
The map is represented by a size $k+v-i$ square matrix $\Phi_i(p,q)$ with determinant 
\[
\det \Phi_i(p,q) = \det \Sigma_i(p,q).
\]
It is easy to verify that $\phi'_i$ is not surjective if $\delta >i$ and that
$\phi'_i$ is injective if $\delta = i,$ in accordance with (\ref{eq:spqi}).

We generalize the map $\sigma_i$ and the first part of criterion (\ref{eq:spqi}). For $\deg p(x) = v$ and $\deg q(x) = k$, let $E=\{ 0,1,\ldots,k-1 \},$ and let $I$ be a subset of $v$ nonnegative integers.
Let $m$ be minimal such that $E \cup I \subset \{ 0,1,\ldots,m-1 \},$ and let
\[
\phi_I : F[x]_{<m-v} \times F[x]_{<m-k} \longrightarrow F[x]_{<m}, \quad (a,b) \mapsto ap+bq.
\]
The domain is of dimension $2m-k-v.$ Let $F[x]_{<m} = V_0 \oplus V_1 \oplus V_2$, for
\[
V_0 = \langle x^i : i \in E \cap I \rangle,  
\quad V_1 = \langle x^i : i \in (E \cup I) \backslash (E \cap I) \rangle,  \quad V_2 = \langle x^i : i \not \in E \cup I \rangle.
\]
Then $\dim V_0 + \dim V_1 + \dim V_2 = m$ and $\dim V_2 = m-k-v + \dim V_0.$ 
In particular $\dim V_1 + 2 \dim V_2 = 2m-k-v.$ Define a map
\[
\sigma_I ~:~ F[x]_{<m-v} \times F[x]_{<m-k} \longrightarrow V_1 \times V_2 \times V_2, \quad (a,b) \mapsto
( \pi_1 (ap+bq), \pi_2(ap), \pi_2(bq) ).
\]
For $I = \{ 0,1,\ldots,i-1 \} \cup \{ k,k+1, k+v-i-1 \}$, with $0 \leq i \leq \min (v,k)$, the map reduces to
the special case $\sigma_I = \sigma_i$. In that case $V_2 = 0.$ To derive properties for the general case we extend the map $\phi_I$ to a map $\phi'_I,$
\[
\phi'_I : V_0 \times F[x]_{<m-v} \times F[x]_{<m-k} \longrightarrow F[x]_{<m} \times V_2, \quad (a_0,a,b) \mapsto (a_0+ap+\pi_1(bq), \pi_2(bq)).
\]
Note that $\deg bq \geq k$ implies $bq \in V_1 + V_2$ and thus $a_0 + ap + \pi_1(bq)$ $+\pi_2(bq) =
a_0 + ap +bq.$ 
 
\begin{lemma} \label{L:genI}
The map $\phi'_I$ is singular if $\delta > \lvert I \cap E \rvert$. 
\end{lemma}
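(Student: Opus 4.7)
The plan is to exhibit a nonzero element of $\ker \phi'_I$, which proves singularity since $\phi'_I$ maps between spaces of equal dimension. The idea is to exploit the module of syzygies of the pair $(p,q)$: because $d = \gcd(p,q)$ has degree $\delta > \lvert I \cap E \rvert$, this module is nontrivial, and it has just enough room to beat the codimension imposed by $V_2$.

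Concretely, I will write $p = d p_1$ and $q = d q_1$ with $\gcd(p_1,q_1) = 1$, $\deg p_1 = v - \delta$, and $\deg q_1 = k - \delta$. For any scalar polynomial $s$, the pair $(a,b) = (s q_1, -s p_1)$ satisfies $a p + b q = 0$. Tracking the degree constraints $\deg a < m-v$ and $\deg b < m-k$ reduces to the single condition $\deg s < m - v - k + \delta$, so the admissible syzygies form an $F$-vector space $S$ of dimension $m - v - k + \delta$. This is positive by the minimality of $m$ (giving $m \geq \lvert E \cup I \rvert = k + v - \lvert I \cap E \rvert$) combined with the hypothesis $\delta > \lvert I \cap E \rvert$.

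Since $\dim V_2 = m - \lvert E \cup I \rvert = m - v - k + \lvert I \cap E \rvert$, the same hypothesis yields $\dim S > \dim V_2$. Hence the $F$-linear map $\Pi : S \to V_2$, $s \mapsto \pi_2(s q_1 p)$, has nontrivial kernel. Picking a nonzero $s_0 \in \ker \Pi$ and setting $a = s_0 q_1$, $b = -s_0 p_1$, and $a_0 = -\pi_0(a p) \in V_0$, I will then verify that $(a_0, a, b) \in \ker \phi'_I$ and that it is nonzero, the latter because $a = s_0 q_1 \neq 0$. The verification is short: using the identity $q_1 p = p_1 q$, one gets $\pi_2(b q) = -\pi_2(s_0 q_1 p) = 0$ and $\pi_1(b q) = -\pi_1(a p)$, so $a_0 + a p + \pi_1(b q) = \pi_2(a p) = 0$.

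The main obstacle is setting up the auxiliary map $\Pi$ and matching dimensions cleanly; once $S$ is introduced and its dimension recorded, the remaining steps follow directly from the syzygy identity $a p + b q = 0$ and the defining property of $s_0$.
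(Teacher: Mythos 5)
Your proof is correct, and it takes a genuinely different route from the paper's. The paper argues non-surjectivity: after recombining the two output coordinates via $(f,g) \mapsto f + g$, the image of $\phi'_I$ sits inside $V_0$ plus the space of multiples of $d$ of degree $< m$, which is a proper subspace of $F[x]_{<m}$ once $\delta > \lvert I \cap E\rvert$. You instead establish non-injectivity by producing an explicit nonzero kernel element: starting from the syzygy $(s q_1, -s p_1)$ with $ap+bq = 0$, the degree constraints carve out a syzygy space $S$ of dimension $m-k-v+\delta$, which strictly exceeds $\dim V_2 = m-k-v+\lvert I \cap E\rvert$; a kernel element $s_0$ of the auxiliary map $\Pi : S \to V_2$, $s \mapsto \pi_2(sq_1p)$, together with $a_0 = -\pi_0(ap)$, then gives $\phi'_I(a_0,a,b) = (\pi_2(ap), -\pi_2(s_0q_1p)) = 0$. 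All the arithmetic checks: $\deg a < m-v$ and $\deg b < m-k$ hold, $a = s_0 q_1 \neq 0$, and the reduction $a_0 + ap + \pi_1(bq) = \pi_2(ap)$ using $bq = -ap$ is exactly right. One small nit: you invoke ``minimality of $m$'' to get $m \geq \lvert E \cup I\rvert$, but this only needs $E \cup I \subset \{0,\dots,m-1\}$, not minimality. Your approach has the advantage of being entirely constructive and unambiguous about where the rank drop occurs, whereas the paper's one-line non-surjectivity argument is terse and relies on a divisibility observation that is compressed into a ``support'' claim; on the other hand, the paper's viewpoint meshes more directly with the subresultant interpretation developed in the rest of the appendix.
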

\begin{proof}
In the first coordinate, $a_0$ has support in $E \cap I$ and $ap+\pi_1(bq)$ has support in $\{ \delta, \ldots, m-1 \}$. The map is
surjective only if $\{ 0,1,\ldots,\delta-1 \} \subset E \cap I,$ hence only if $\delta \leq | E \cap I |.$ 
\end{proof}

Let
\begin{align*}
\sigma'_I ~:~ &V_0 \times  F[x]_{<m-v} \times F[x]_{<m-k} \longrightarrow V_0 \times V_1 \times V_2 \times V_2. \\
&~~(a_0,a,b) \mapsto (a_0 + \pi_0(ap),\pi_1 (ap+bq), \pi_2(ap), \pi_2(bq) ). 
\end{align*}
Then $\sigma'_I = ((\pi_0,\pi_1,\pi_2),id) \circ \phi'_I$ differs from $\phi'_I$ by a bijective map 
$(\pi_0,\pi_1,\pi_2) : F[x]_{<m} \rightarrow V_0 \oplus V_1 \oplus V_2$ and thus Lemma \ref{L:genI} applies to $\sigma'_I.$ 
Moreover, the matrices representing the two maps on a basis of monomials are the same and thus they share
the same determinant. The matrix representing $\sigma_I$ has different size but the same determinant as $\sigma'_I.$

\begin{corollary} \label{C:genI}
The map $\sigma_I$ is singular if $\delta > \lvert I \cap E \rvert$. 
\end{corollary}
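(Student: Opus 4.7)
The plan is to exploit the factorization $\sigma'_I = ((\pi_0,\pi_1,\pi_2),\,\mathrm{id}) \circ \phi'_I$ that is stated in the paragraph preceding the corollary, together with a block decomposition that relates $\sigma'_I$ and $\sigma_I$. The goal is to show that the three matrices (for $\phi'_I$, $\sigma'_I$, and $\sigma_I$) have the same determinant up to a nonzero factor, so that Lemma~\ref{L:genI} transfers directly to $\sigma_I$.

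First I would make explicit the observation that $(\pi_0,\pi_1,\pi_2) \colon F[x]_{<m} \to V_0 \oplus V_1 \oplus V_2$ is simply a reordering of the monomial basis. Hence the matrix representing $\phi'_I$ and the matrix representing $\sigma'_I$ on the natural monomial bases are identical (up to a basis permutation that has $\pm 1$ determinant), so $\phi'_I$ is singular if and only if $\sigma'_I$ is singular.

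Next I would compare $\sigma'_I$ with $\sigma_I$. The domain of $\sigma'_I$ contains the extra summand $V_0$, and the codomain contains an extra $V_0$. On the $V_0$-block the map sends $a_0 \mapsto a_0 + \pi_0(ap)$, so reading the matrix with the $V_0$-factors placed in the leading block-row and block-column, it has the form
\[
\begin{pmatrix} I_{V_0} & X \\ 0 & M(\sigma_I) \end{pmatrix},
\]
where $X$ encodes $a \mapsto \pi_0(ap)$ and $M(\sigma_I)$ is the matrix representing $\sigma_I$. (Crucially the $V_1$- and $V_2$-outputs do not depend on $a_0$, which is what puts the zero in the lower-left block.) Consequently $\det \sigma'_I = \det \sigma_I$.

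Combining the two steps, $\sigma_I$ is singular precisely when $\phi'_I$ is singular, and Lemma~\ref{L:genI} then gives the conclusion. No serious obstacle is anticipated; the only thing to be careful about is the accounting of the block structure when passing from $\sigma'_I$ to $\sigma_I$, i.e.\ checking that the $V_0$-input really only affects the $V_0$-component of the output, which is clear from the definition of $\sigma'_I$.
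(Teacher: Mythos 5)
Your proposal is correct and follows the same route as the paper's own argument, which is given in the paragraph preceding the corollary: $\sigma'_I$ agrees with $\phi'_I$ up to a reordering of the codomain basis (so they are singular together, and Lemma~\ref{L:genI} applies), and the matrix for $\sigma'_I$ has the same determinant as the matrix for $\sigma_I$. The only substantive difference is that you supply the explicit block upper-triangular decomposition $\left(\begin{smallmatrix} I_{V_0} & X \\ 0 & M(\sigma_I) \end{smallmatrix}\right)$, observing that the $V_1$- and $V_2$-outputs of $\sigma'_I$ are independent of $a_0$, which justifies the determinant equality the paper asserts without detail.
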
  

Let $\Sigma_I(p,q)$ denote the size $2m-k-v$ matrix that represents the linear map $\sigma_I$.

\begin{example} \label{ex:si}
For $E = \{ 0,1,2 \}$ and  $I = \{ 1,4 \}$, $m=5$. The matrix $\Sigma_I$ includes columns that project $ab+bq$
onto coordinates in $(E \cup I) \backslash (E \cap I) = \{ 0, 2, 4 \}$ and columns that project $ap$ and $bq$ 
onto coordinates in $\{ 0,1,\ldots,m \} \backslash (E \cup I) = \{ 3 \}.$ 
\[
\Sigma_I (p,q) = 
\bordermatrix{
&0  &2 &3 &3  &4 \cr \noalign{\smallskip}
&p_2 &p_0 &\cdot &\cdot &\cdot \cr
&\cdot &p_1 &p_0 &\cdot &\cdot  \cr
&\cdot &p_2 &p_1 &\cdot &p_0 \cr \cline{2-6}
&q_3  &q_1 &\cdot &q_0 &\cdot  \cr
&\cdot &q_2 &\cdot &q_1 &q_0 \cr
}
\]
\end{example}    

Let $\alpha_0, \alpha_1, \ldots, \alpha_{n-1} \in F$, and let
$q(x) = (x-\alpha_0)(x-\alpha_1)\cdots(x-\alpha_{k-1}).$ 
To relate $\det \Sigma_I (p,q)$ to the determinants $\det ( h_I ; \alpha_L )$ in Section \ref{S:rs}, we replace $\sigma_I$ with a map $\eta_I$ that is defined in terms of the zeros of $p(x).$ Let $p(x)  = \prod_{j \in L} (x-\alpha_j)$ have $v$ distinct zeros,
indexed by $L \in J(n,v)$. Recall that $\det ( h_I ; \alpha_L ) = \det ( h_i(\alpha_j) : i \in I, j  \in L )$,
where
\[
 h_i(x) = \begin{cases} x^i,   &0 \leq i \leq k-1. \\ q(x) x^{i-k}, &k \leq i \leq n-1, \end{cases}.
 \]

Let  $ev_L : F[x] \longrightarrow F^v, ~~f \mapsto ( f(\alpha_j) : j \in L).$ Define a map
\[
\eta_I : ~:~ V_0 \times F[x]_{<m-k} \longrightarrow  F^v \times V_2, \quad (a_0,b) \mapsto ( ev_L(a_0 + \pi_1(bq)),~~\pi_2(bq)  ).
\]
Since $\dim V_0 + m-k = v + \dim V_2$, the map $\eta_I$ is represented by a square matrix.
The map $\eta_I$ extends to a map
\begin{align*}
\eta'_I ~:~ &V_0 \times F[x]_{<m-v} \times F[x]_{<m-k} \longrightarrow F[x]_{\geq v} \times F^v \times V_2. \\
&~~(a_0,a,b) \mapsto (\pi_{\geq v} (a_0 + ap + \pi_1(bq)), ev_L(a_0+\pi_1(bq)), \pi_2(bq)).
\end{align*}
The determinants for $\eta'_I$ and $\eta_I$ differ by a factor $p_0^{m-v}$, $\det \eta'_I = p_0^{m-v} \det \eta_I.$
The map $\eta'_I = ((\pi_{\geq v},ev_L),id) \circ \phi'_I$ differs from $\phi'_I$ by a bijective map and Lemma \ref{L:genI} applies to $\eta'_I.$

\begin{corollary} The map $\eta_I$ is singular if $\delta > \lvert I \cap E \rvert$.
\end{corollary}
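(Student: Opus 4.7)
The plan is to mirror the argument used to derive Corollary \ref{C:genI} from Lemma \ref{L:genI}, by factoring $\eta'_I$ through $\phi'_I$. The text already announces the factorization
\[
\eta'_I \;=\; \bigl((\pi_{\geq v},\,ev_L),\,\mathrm{id}_{V_2}\bigr)\circ \phi'_I,
\]
so the first task is to verify that the leading factor is a linear isomorphism on $F[x]_{<m}\times V_2$. This reduces to showing that $(\pi_{\geq v},\,ev_L) : F[x]_{<m} \to \bigl(F[x]_{<m}\cap F[x]_{\geq v}\bigr) \oplus F^v$ is an isomorphism, which in turn follows from the Vandermonde fact that $ev_L$ restricted to $F[x]_{<v}$ is a bijection onto $F^v$, since the $v$ field elements $\alpha_j$ with $j\in L$ are distinct.

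Once the isomorphism is in place, $\det \eta'_I$ and $\det \phi'_I$ vanish together. Lemma \ref{L:genI} gives $\det \phi'_I = 0$ whenever $\delta > |I\cap E|$, and therefore $\det \eta'_I = 0$ in the same range. To finish, I would invoke the scaling identity $\det \eta'_I = p_0^{m-v}\det \eta_I$ quoted just before the statement; because $p(x)=\prod_{j\in L}(x-\alpha_j)$ is monic, the leading coefficient is $p_0=1$, and consequently $\det \eta_I = 0$, which is the desired singularity of $\eta_I$.

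The one step that is not purely formal is the scaling identity itself. Verifying it requires recognizing that the passage from $\eta_I$ to $\eta'_I$ introduces an $(m-v)$-dimensional input slot (the polynomial $a\in F[x]_{<m-v}$) together with a matching $(m-v)$-dimensional output slot (the projection $\pi_{\geq v}(ap)$). In the monomial basis these decouple from the other blocks and form a triangular Toeplitz matrix whose diagonal entries all equal the leading coefficient $p_0$ of $p(x)$. This bookkeeping is where the argument could plausibly snag, but monicity of $p(x)$ kills the scalar factor entirely, and the proof concludes immediately via the chain above.
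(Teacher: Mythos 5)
Your proof is correct and follows essentially the same route as the paper: factor $\eta'_I$ through $\phi'_I$ via the bijection $(\pi_{\geq v}, ev_L)$, invoke Lemma~\ref{L:genI} for $\phi'_I$, and transfer singularity back to $\eta_I$ through the scaling identity $\det\eta'_I = p_0^{m-v}\det\eta_I$. Two small remarks: the key reason the stated factorization of $\eta'_I$ through $\phi'_I$ is valid (and worth making explicit) is that $ev_L(ap)=0$ since $p$ vanishes on $\alpha_j$ for $j\in L$, which is also what makes the extra $(m-v)$-dimensional block triangular rather than merely ``decoupled''; and monicity of $p$ is unnecessary --- since $p$ has degree exactly $v$, $p_0\neq 0$ already suffices to pass from $\det\eta'_I=0$ to $\det\eta_I=0$.
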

  
\begin{proposition} \label{P:sh}
\[
\det (ev_L) \det \sigma_I = p_0^{m-v} \det \eta_I.
\]
\end{proposition}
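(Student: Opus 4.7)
The plan is to express $\eta'_I$ as a composition $\eta'_I = T \circ \phi'_I$, where $T \colon F[x]_{<m} \times V_2 \to F[x]_{\geq v} \times F^v \times V_2$ is the linear map $T(f,c) = (\pi_{\geq v}(f),\, ev_L(f),\, c)$. Verifying this factorization amounts, after unwinding definitions, to checking the identity $ev_L(a_0 + ap + \pi_1(bq)) = ev_L(a_0 + \pi_1(bq))$, i.e.\ to $ev_L(ap) = 0$. This key fact is immediate from the definition of $p(x)$: since $p(x) = \prod_{j \in L}(x-\alpha_j)$ vanishes at each $\alpha_j$ with $j \in L$, one has $(ap)(\alpha_j) = a(\alpha_j)\cdot 0 = 0$ for every $j \in L$.

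Next I would compute $\det T$. Because $T$ acts as the identity on the $V_2$ factor, $\det T$ equals the determinant of $T_0 \colon F[x]_{<m} \to F[x]_{\geq v} \times F^v$, $f \mapsto (\pi_{\geq v}(f), ev_L(f))$. Decomposing $F[x]_{<m} = F[x]_{<v} \oplus \langle x^v, \dots, x^{m-1}\rangle$, the map $T_0$ has a block-triangular matrix whose diagonal blocks are $ev_L|_{F[x]_{<v}}$ (the $v \times v$ matrix whose determinant is $\det(ev_L)$) and the identity on $\langle x^v, \dots, x^{m-1}\rangle$. Ordering the codomain basis as $F^v$ followed by the monomials $x^v, \dots, x^{m-1}$ puts this in lower-block-triangular form and yields $\det T = \det(ev_L)$.

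To assemble the proposition, I would invoke the two identities already recorded in the text: $\det \sigma_I = \det \phi'_I$, which follows from the fact that $\sigma'_I$ and $\phi'_I$ have identical monomial-basis matrices and $\sigma_I$ shares its determinant with $\sigma'_I$; and $\det \eta'_I = p_0^{m-v}\det \eta_I$, which comes from a block-triangular decomposition of $\eta'_I$ in which the only additional block beyond $\eta_I$ is the map $F[x]_{<m-v} \to \langle x^v, \dots, x^{m-1}\rangle$, $a \mapsto \pi_{\geq v}(ap)$, whose matrix is upper triangular with $p_0$ along the diagonal. Combining everything gives
\[
\det(ev_L)\,\det\sigma_I \;=\; \det T\cdot \det\phi'_I \;=\; \det\eta'_I \;=\; p_0^{m-v}\det\eta_I,
\]
which is the desired identity. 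The only real obstacle is bookkeeping the basis orderings carefully so that the sign factors from reshuffling blocks cancel cleanly; the substantive content of the argument is the single observation $ev_L(ap)=0$.
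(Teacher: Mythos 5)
Your proof is correct and follows the same route as the paper's own one-line argument, which simply chains together $\det\eta'_I = \det(ev_L)\det\phi'_I = \det(ev_L)\det\sigma_I$ with $\det\eta'_I = p_0^{m-v}\det\eta_I$. You have merely unpacked the ingredients the paper leaves implicit in the surrounding text: the factorization $\eta'_I = ((\pi_{\geq v},ev_L),\mathrm{id})\circ\phi'_I$ rests precisely on the observation $ev_L(ap)=0$, and the factor $\det(ev_L)$ comes from the block-triangular computation of the determinant of that change-of-coordinates map.
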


\begin{proof}
Combine $\det \eta'_I = \det (ev_L) \det \phi'_I = \det (ev_L) \det \sigma_I$ and $\det \eta'_I = p_0^{m-v} \det \eta_I.$
\end{proof}
   
Note that $\det (ev_L )$ is the Vandermonde determinant for the elements $\alpha_j , j \in L.$ Using a cofactor expansion for
$\det \eta_I$ yields an expression $\det ( h_I ; \alpha_L ) + \sum_{J < I} c_J \det ( h_J ; \alpha_L )$. The proposition therefore describes
a triangular transformation between the two sets of determinants $\{ \det \Sigma_I(p,q) \}_I$ and $\{ \det ( h_I ; \alpha_L ) \} _{I},$
with coefficients $c_J,$ for $J \leq I,$ that depend on $q(x)$ but not on $p(x)$. 

\begin{example}
For $E = \{ 0,1,2 \}$ and  $I = \{ 1,4 \}$, $q(x) = (x-\alpha_0)(x-\alpha_1)(x-\alpha_2).$ Let $p(x) = (x-\alpha_i)(x-\alpha_j)$.
\begin{align*}
 \det \begin{pmatrix} 1 &1 \\  \alpha_i  &\alpha_j \end{pmatrix} \det \Sigma_{\{1,4\}}(p,q) ~=~ &
          p_0^3 \det \begin{pmatrix} h_1(\alpha_i) &h_1(\alpha_j) &0 \\ h_3(\alpha_i) &h_3(\alpha_j) &q_0 \\ h_4(\alpha_i) &h_4(\alpha_j) &q_1 \end{pmatrix} \\
          ~=~ &- p_0^3 q_0 \det \begin{pmatrix} h_1(\alpha_i) &h_1(\alpha_j)  \\ h_4(\alpha_i) &h_4(\alpha_j) \end{pmatrix} 
                     + p_0^3 q_1 \det \begin{pmatrix} h_1(\alpha_i) &h_1(\alpha_j)  \\ h_3(\alpha_i) &h_3(\alpha_j) \end{pmatrix} \\
                ~=~ &- p_0^3 q_0 \det ( h_{\{1,4\}} ; \alpha_i, \alpha_j )  
                     + p_0^3 q_1 \det ( h_{\{1,3\}} ; \alpha_i, \alpha_j ).
\end{align*}
\end{example}

The map $\sigma_I$ has a doubly extended version
\begin{align*}
\sigma''_I ~:~ &V_0 \times V_0 \times  F[x]_{<m-v} \times F[x]_{<m-k} \longrightarrow V_0 \times V_0 \times V_1 \times V_2 \times V_2. \\
&~~(a_0,b_0,a,b) \mapsto (a_0 + \pi_0(ap), b_0 + \pi_0(bq), \sigma_I(a,b)).
\end{align*}

Determinants are the same for $\sigma_I$ and for the extended maps $\sigma'_I, \sigma''_I$, or the same up to sign if
bases are allowed to be reordered. The next example extends the matrix for $\sigma_I$ in Example~\ref{ex:si} to a matrix for $\sigma''_I.$

\begin{example} \label{ex:si2}
For $E = \{ 0,1,2 \}$ and  $I = \{ 1,4 \}$, $V_0 = \langle x \rangle, ~V_1 = \langle x^0, x^2, x^4 \rangle, ~V_2 =  \langle x^3 \rangle.$
\[
\Sigma''_I(p,q) = 
\bordermatrix{
&0 &1 &1 &2 &3 &3  &4 \cr \noalign{\smallskip}
&\cdot &1 &\cdot &\cdot &\cdot &\cdot &\cdot \cr
&\cdot &\cdot &1 &\cdot &\cdot &\cdot &\cdot \cr 
 \cline{2-8}
&p_2 &p_1 &\cdot &p_0 &\cdot &\cdot &\cdot \cr
&\cdot &p_2 &\cdot &p_1 &p_0 &\cdot &\cdot  \cr
&\cdot &\cdot &\cdot &p_2 &p_1 &\cdot &p_0 \cr \cline{2-8}
&q_3 &\cdot &q_2 &q_1 &\cdot &q_0 &\cdot  \cr
&\cdot &\cdot &q_3 &q_2 &\cdot &q_1 &q_0 \cr
}.
\]
\end{example}

\end{document}